\numberwithin{equation}{section}
\newtheorem{theorem}{Theorem}[section]
\newtheorem{proposition}{Proposition}[section]
\newtheorem{definition}{Definition}[section]
\newtheorem{corollary}{Corollary}[section]
\newtheorem{remark}{Remark}[section]
\journal{Elsevier}
\begin{document}

\begin{frontmatter}
\title{Existence and multiplicity of solutions to Dirichlet problem for semilinear subelliptic equation with a free perturbation\tnoteref{label0}}
\tnotetext[label0]{This work is supported by National Natural Science Foundation of China (Grants No. 11631011 and 11626251) and China Postdoctoral Science Foundation (Grant No. 2020M672398).}
\author[label1]{Hua Chen}
\ead{chenhua@whu.edu.cn}
\author[label2]{Hong-Ge Chen}
\ead{hongge\_chen@whu.edu.cn}
\author[label1]{Xin-Rui Yuan}
\ead{yuanxinrui@whu.edu.cn}
\address[label1]{School of Mathematics and Statistics, Wuhan University, Wuhan 430072, China}
\address[label2]{Wuhan Institute of Physics and Mathematics\\ Innovation Academy for Precision Measurement Science and Technology\\ Chinese Academy of Sciences, Wuhan 430071, China}
\begin{abstract}
This paper is concerned with existence and multiplicity results for the semilinear
subelliptic equation with free perturbation term. By using the degenerate Rellich–Kondrachov compact embedding theorem, precise lower bound estimates of Dirichlet eigenvalues for the finitely degenerate elliptic operator and minimax method, we obtain the existence and multiplicity of weak solutions for the problem.

\end{abstract}
\begin{keyword}
Finitely degenerate elliptic equations, weighted Sobolev spaces\sep free perturbation\sep generalized M\'{e}tivier index.
\MSC[2020] 35A15 \sep 35H20 \sep 35J70
\end{keyword}
\end{frontmatter}

\section{Introduction and Main Results}

Let $X=(X_1, X_{2},\cdots,X_m)$ be a system of real smooth vector fields defined on an open domain $W$ in $\mathbb{R}^n~ (n\geq 2)$ and satisfy the following H\"ormander's condition (H):\par
(H): $X_{1},X_{2},\ldots,X_{m}$ together with their commutators up to a of length at most $Q$ span the tangent space at each point of $W$. \par

Consider the following semilinear subelliptic Dirichlet problem with a free perturbation,
\begin{equation}\label{problem1-1}
\left\{
      \begin{array}{cc}
      -\triangle_{X}u=f(x,u)+g(x) & \mbox{in}~\Omega, \\[2mm]
      u=0           & \mbox{on}~\partial\Omega,
      \end{array}
 \right.
\end{equation}
where $\Omega\subset\subset W$ is a bounded connected
  open domain with $C^{\infty}$ boundary and the boundary $\partial\Omega$ is assumed to be
  non-characteristic for $X$, $\triangle_{X}:=-\sum_{i=1}^{m}X_{i}^{*}X_{i}$ is a H\"{o}rmander type operator, $g\in L^2(\Omega)$ and $f\in C(\overline{\Omega}\times\mathbb{R})$ with following assumptions:
\begin{itemize}
\item [$(f_{1})$] $f(x,0)=0$, and $\lim_{u\to 0}{\frac{f(x,u)}{u}}= 0$ uniformly in $x\in\overline{\Omega}$.
\item [$(f_2)$]  There exist $2<p<\frac{2\tilde{\nu}}{\tilde{\nu}-2}$ and $C>0$ such that
\[|f(x,u)| \leq C(1+|u|^{p-1})\]
for all $x\in\overline{\Omega}$, $u\in\mathbb{R}$.
\item [$(f_3)$] There exist $q>2$ and $R_0>0$ such that
\[0<qF(x,u)\leq f(x,u)u\]
for all $x\in\overline{\Omega}$ and $|{u}|\geq R_0$.
\item [$(f_4)$]  $f$ is odd with respect to variable $u$:~$f(x,-u)=-f(x,u)$.
\end{itemize}
Here $F(x,u)=\int_{0}^{u}f(x,v)dv$ is the primitive of $f(x,u)$, and $\tilde{\nu}\geq 3$ is the generalized M\'{e}tivier index which is defined in the Definition \ref{def2-2} below.\par

 The H\"{o}rmander type operator $\triangle_{X}$ (also called the finitely degenerate elliptic operator) is an important class of degenerate elliptic operator with smooth coefficients, which has been intensively studied in the late 1960s and is still an active research field. Here we only briefly recall some remarkable results, and one can refer to \cite{Bramanti2014} for more details. In 1967, H\"{o}rmander \cite{hormander1967} proved that, if the H\"{o}rmander's condition is satisfied, then $\triangle_{X}$ is hypoelliptic and admits the subelliptic estimates. Therefore,  $\triangle_{X}$ is also known as the subelliptic operator. Then, the maximum principle and Harnack inequality of $\triangle_{X}$ have been studied by Bony in \cite{Bony1969}. Later in 1976, Rothschild and Stein \cite{stein1976} established the sharp regularity estimates of $\triangle_{X}$ by their celebrated lifting and approximating theory. Meanwhile, M\'{e}tivier \cite{Metivier1976} investigated the eigenvalue problem of $\triangle_{X}$ under the M\'{e}tivier's condition. Furthermore, the H\"{o}rmander's condition allows us to define a Carnot-Carath\'{e}odory metric associated with vector fields, which plays important roles both in sub-Riemannian geometry and PDEs (cf. \cite{Montgomery2002}). Then, Nagel, Stein and Wainger studied the balls with the Carnot-Carath\'{e}odory metric in \cite{Stein1985}, and they gave the precise estimate for their volume. After that,
 the Poincar\'{e} inequality, Sobolev embedding theorem, estimation of Green kernel and heat kernel have also been studied by Jerison, Sanchez-Calle, Capogna, Danielli, Garofalo, Yung, etc. One can refer to \cite{Brandolini2010,Capogna1993,Jersion1986duke,Jerison1986,Sanchezcalle1984,Yung2015} as well as the reference therein.\par

When $X=(\partial_{x_{1}},\ldots,\partial_{x_{n}})$, $\triangle_{X}$ recovers to the classical Laplacian $\triangle$. The existence and multiplicity of weak solutions for the semilinear elliptic Dirichlet problem \eqref{problem1-1} with a free perturbation have been object of a very careful analysis by Bahri and Berestychi in \cite{Bahri1981}, Struwe in \cite{Struwe2000}, Dong and Li in \cite{Dong1982}, and Rabinowitz in \cite{Rabinowitz1982,Rabinowitz1986} via techniques of classical critical point theory. Moreover, these results have been improved by Bahri and Lions in \cite{Bahri1988,Bahri1992} via a  Morse-Index type technique around 1990, and also generalized to quasilinear elliptic equation in \cite{Sqassina2006}.\par

In the degenerate case, under the M\'etivier's condition, the nonlinear subelliptic equations without free perturbation terms have been studied by Xu, Zuily and Venkatesha Murthy in \cite{Murthy2008,Xu1990,Xu1995,Xu1996,Xu1997}. However we can find a lot of degenerate vector fields (e.g. the Grushin type vector fields) in which the M\'etivier's condition will be not satisfied. In this paper, we shall consider the problems no matter whether the M\'etivier's condition will be satisfied or not.

Let's return to our consideration in the beginning. It is worth pointing out that, for the classical semilinear elliptic Dirichlet problem \eqref{problem1-1} with free perturbation term, the classical Rellich–Kondrachov compact embedding theorem and the estimates of lower bound of Dirichlet eigenvalues play crucial roles in acquiring the existence and multiplicity of weak solutions (cf. \cite{Rabinowitz1982,Rabinowitz1986,Struwe2000}). Nevertheless, in the general finitely degenerate case and without the M\'etivier's condition, the lack of degenerate Rellich–Kondrachov compact embedding results and precise estimates of lower bound of Dirichlet eigenvalues will cause many new difficulties over a long period in the past. To our best knowledge, there is little information in literature about the existence and multiplicity of weak solutions for the semilinear subelliptic Dirichlet problem \eqref{problem1-1}.\par

In this aspect, Yung \cite{Yung2015} in 2015 established the sharp subelliptic Sobolev embedding of the weighted Sobolev space. Recently, the precise lower bound estimates of Dirichlet eigenvalues for the general H\"{o}rmander type subelliptic operator $\triangle_{X}$ have been obtained by Chen-Chen \cite{chen-chen2019}. With the help of these new results, we can now investigate the existence and multiplicity of weak solutions for the semilinear subelliptic Dirichlet problem \eqref{problem1-1} via the minimax method.

\par
To study the subelliptic equation \eqref{problem1-1} generated by vector fields $X$, we need to introduce the following corresponding weighted Sobolev spaces, namely
   \[ H_{X}^{1}(W)=\{u\in L^{2}(W)~|~X_{j}u\in L^{2}(W), j=1,\cdots,m\}. \]
 $H_{X}^{1}(W)$ is a Hilbert space endowed with norm $\|u\|^2_{H^{1}_{X}(W)}=\|u\|_{L^2(W)}^2+\|Xu\|_{L^2(W)}^2$, where  $\|Xu\|_{L^2(W)}^2=\sum_{j=1}^{m}\|X_{j}u\|_{L^2(W)}^2$. Then, we denote by $H_{X,0}^{1}(\Omega)$ the  closure of $C_{0}^{\infty}(\Omega)$ in $H_{X}^{1}(W)$, which is also a Hilbert space. \par
Now, we present our main results. First, if the perturbation term  in problem \eqref{problem1-1} vanishes, i.e $g\equiv 0$, we have
\begin{theorem}
 \label{thm1-1}
 Let $X=(X_{1},X_{2},\cdots,X_{m})$ be $C^{\infty}$ real vector fields defined in an open domain
  $W\subset\mathbb{R}^n$, which satisfy the H\"{o}rmander's condition (H) on $W$.  Assume that $\Omega\subset\subset W$ is a bounded connected open subset, and $\partial\Omega$ is smooth and non-characteristic for $X$. Then, for $g\equiv0$, we have following results:
  \begin{enumerate}
    \item [(1)] If  $f$ satisfies assumptions $(f_{1})$-$(f_{3})$, then problem \eqref{problem1-1} possesses a nontrivial weak solution $u\in H_{X,0}^1(\Omega)$.
    \item [(2)] If  $f$ satisfies assumptions $(f_2)$-$(f_4)$, then problem \eqref{problem1-1} admits an unbounded sequence of weak solutions $\{u_k\}_{k=1}^{\infty}$ in $H_{X,0}^1(\Omega)$.

  \end{enumerate}
 \end{theorem}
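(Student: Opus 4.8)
The plan is to realize the weak solutions of \eqref{problem1-1} with $g\equiv 0$ as critical points of the energy functional
\[
J(u)=\frac12\|Xu\|_{L^2(\Omega)}^2-\int_\Omega F(x,u)\,dx,\qquad u\in H_{X,0}^1(\Omega),
\]
and then run variational minimax arguments. I would first fix the functional framework: a subelliptic Poincar\'e inequality on the bounded domain $\Omega$ shows that $\|Xu\|_{L^2(\Omega)}$ is a norm on $H_{X,0}^1(\Omega)$ equivalent to the graph norm; Yung's sharp subelliptic Sobolev embedding together with the estimates controlling the generalized M\'etivier index $\tilde\nu$ yields the continuous embedding $H_{X,0}^1(\Omega)\hookrightarrow L^p(\Omega)$ for $1\le p\le\frac{2\tilde\nu}{\tilde\nu-2}$, which is moreover compact when $p<\frac{2\tilde\nu}{\tilde\nu-2}$ (the degenerate Rellich--Kondrachov theorem). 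From the growth bound $(f_2)$ and the standard properties of Nemytskii operators one then checks $J\in C^1(H_{X,0}^1(\Omega),\mathbb R)$ with $\langle J'(u),v\rangle=\int_\Omega\big(\sum_{j=1}^m X_ju\,X_jv-f(x,u)v\big)\,dx$, so that critical points of $J$ are exactly the weak solutions of \eqref{problem1-1}.

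Next I would verify the Palais--Smale condition, common to both parts. If $J(u_n)$ is bounded and $J'(u_n)\to 0$, then testing with the combination $J(u_n)-\frac1q\langle J'(u_n),u_n\rangle\ge(\frac12-\frac1q)\|Xu_n\|_{L^2(\Omega)}^2-C$, which uses the Ambrosetti--Rabinowitz type condition $(f_3)$, shows $\{u_n\}$ is bounded in $H_{X,0}^1(\Omega)$; passing to a subsequence with $u_n\rightharpoonup u$ and, by the compact embedding, $u_n\to u$ in $L^p(\Omega)$, expanding $\langle J'(u_n)-J'(u),u_n-u\rangle$ and controlling the nonlinear term by $(f_2)$ gives $\|X(u_n-u)\|_{L^2(\Omega)}\to 0$, i.e.\ strong convergence. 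For part (1), $(f_1)$--$(f_2)$ give $|F(x,u)|\le\varepsilon u^2+C_\varepsilon|u|^p$ for every $\varepsilon>0$, so choosing $\varepsilon$ small and then $\|Xu\|_{L^2(\Omega)}=\rho$ small makes $J|_{\partial B_\rho}\ge\alpha>0$; integrating $(f_3)$ gives $F(x,u)\ge c_1|u|^q-c_2$ with $q>2$, hence $J(tu_0)\to-\infty$ along any fixed ray $u_0\ne 0$, producing $e$ with $\|e\|>\rho$ and $J(e)<0$. The Mountain Pass Theorem then furnishes a critical value $c\ge\alpha>0$, and the corresponding critical point is a nontrivial weak solution.

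For part (2), $(f_4)$ makes $J$ even with $J(0)=0$, and I would invoke a symmetric minimax principle — the $\mathbb Z_2$-symmetric Mountain Pass Theorem, or equivalently the Fountain Theorem — along the eigenspace decomposition of the Dirichlet operator $-\triangle_X$ on $H_{X,0}^1(\Omega)$. With $X_k=\mathrm{span}\{e_1,\dots,e_k\}$ and $Z_k=\overline{\mathrm{span}\{e_k,e_{k+1},\dots\}}$ for the Dirichlet eigenfunctions $\{e_j\}$: on the finite-dimensional space $X_k$, equivalence of norms together with $F(x,u)\ge c_1|u|^q-c_2$ and $q>2$ forces $\sup_{u\in X_k,\,\|u\|=\rho_k}J(u)\le 0$ once $\rho_k$ is large; and since the compact embedding forces $\beta_k:=\sup\{\|u\|_{L^p(\Omega)}:u\in Z_k,\ \|Xu\|_{L^2(\Omega)}=1\}\to 0$, the growth bound $(f_2)$ gives, for a suitable choice $r_k\to\infty$ with $r_k<\rho_k$, that $\inf_{u\in Z_k,\,\|u\|=r_k}J(u)\to+\infty$. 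Since (PS) holds at every positive level, the Fountain Theorem produces an unbounded sequence of critical values $c_k\to\infty$, and the growth bound forces the corresponding critical points $\{u_k\}$ to be unbounded in $H_{X,0}^1(\Omega)$.

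I expect the main difficulty to lie not in the classical minimax machinery but in the functional-analytic foundations at the borderline growth $p<\frac{2\tilde\nu}{\tilde\nu-2}$: the $C^1$-regularity of $J$, the compactness step in the Palais--Smale argument, and the divergence $\inf_{Z_k}J\to+\infty$ all rely on having the degenerate Rellich--Kondrachov compact embedding $H_{X,0}^1(\Omega)\hookrightarrow\hookrightarrow L^p(\Omega)$ available in the general finitely degenerate case (with no M\'etivier condition) and with the sharp subcritical range dictated by the generalized M\'etivier index $\tilde\nu$ — precisely the point where the results of Yung and of Chen--Chen are indispensable. The remaining arguments are adaptations of the classical treatment in the uniformly elliptic setting.
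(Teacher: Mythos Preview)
Your proposal is correct and matches the paper's approach: the same functional framework, the same (PS) verification via $(f_3)$ and the compact embedding (the paper packages the convergence step through an abstract $L+K$ decomposition while you argue directly, but the content is identical), Mountain Pass for part~(1), and a $\mathbb{Z}_2$-symmetric minimax for part~(2) (the paper invokes Rabinowitz's Symmetric Mountain Pass Theorem where you use the Fountain Theorem, and these are interchangeable here). One minor correction to your closing commentary: Chen--Chen's precise eigenvalue lower bounds are \emph{not} needed for Theorem~\ref{thm1-1} --- the compact embedding and the fact that $\lambda_k\to\infty$ suffice --- and in the paper those bounds enter only in the perturbed case, Theorem~\ref{thm1-2}.
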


\begin{remark}
In fact, the assumptions $(f_1)$ and $(f_3)$ are sufficient but not necessary conditions for the existence of nontrivial weak solution. For example, let $X_{1}=\partial_{x_{1}}+2x_{2}\partial_{x_{3}}$ and $X_{2}=\partial_{x_{2}}-2x_{1}\partial_{x_{3}}$ be the vector fields on $\mathbb{R}^{3}$ which satisfy the H\"{o}rmander's condition and Metivier's condition with M\'{e}tivier index $\nu=4$ (see Definition \ref{def2-1} below). Then, considering $f(x,u)=\lambda u+u^{2}$ and $g(x)\equiv 0$ with $0<\lambda<\lambda_{1}$ (where $\lambda_{1}$ is the first Dirichlet eigenvalue for $-\triangle_{X}$ on $\Omega$), we can verify that $f(x,u)$ only satisfies assumption $(f_2)$, but in this case, one can also prove that the \eqref{problem1-1} possesses a nontrivial weak solution in $H_{X,0}^{1}(\Omega)$ (see Theorem 4.1 in \cite{Xu1994}).
\end{remark}

Then, for any non-vanishing free perturbation, we obtain

\begin{theorem}
 \label{thm1-2}
Let $X=(X_{1},X_{2},\cdots,X_{m})$ and $\Omega$ satisfy the same conditions as Theorem \ref{thm1-1}. Suppose that $f$ satisfies assumptions $(f_2)$-$(f_4)$ and the condition (A): $\frac{2p}{\tilde{\nu}(p-2)}-1>\frac{q}{q-1}$. Then for any function $g\in L^2(\Omega)$ such that $g\not\equiv0$, the problem $\eqref{problem1-1}$ has an unbounded sequence of weak solutions $\{u_k\}_{k=1}^{\infty}$ in $H_{X,0}^1(\Omega)$.
\end{theorem}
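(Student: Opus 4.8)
We cast \eqref{problem1-1} variationally and then adapt Rabinowitz's \emph{perturbation from symmetry} scheme to the subelliptic setting, the two genuinely new ingredients being the degenerate Rellich--Kondrachov theorem (for compactness and for improved embedding constants on high eigenspaces) and the Chen--Chen lower bound $\lambda_k\gtrsim k^{2/\tilde\nu}$ for the Dirichlet eigenvalues of $-\triangle_X$ (to control the growth of the minimax levels). Work on $E:=H^1_{X,0}(\Omega)$ with the energy functional
\[
I(u)=\tfrac12\|Xu\|_{L^2(\Omega)}^2-\int_\Omega F(x,u)\,dx-\int_\Omega gu\,dx,
\]
whose critical points are precisely the weak solutions. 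By $(f_2)$ together with the subelliptic Sobolev embedding $E\hookrightarrow L^{2\tilde\nu/(\tilde\nu-2)}(\Omega)$ one gets $I\in C^1(E,\mathbb{R})$, and by $(f_3)$ (the Ambrosetti--Rabinowitz condition) together with the compact embedding $E\hookrightarrow\hookrightarrow L^p(\Omega)$ for $p<2\tilde\nu/(\tilde\nu-2)$ one gets a bounded Palais--Smale condition in the form needed below. The obstruction is that $g\not\equiv0$ destroys the $\mathbb{Z}_2$-symmetry that $(f_4)$ would provide, so the symmetric mountain-pass argument behind Theorem~\ref{thm1-1}(2) does not apply directly.

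Following Rabinowitz, the first step is to replace $I$ by a truncated functional $\tilde I(u)=\tfrac12\|Xu\|_{L^2(\Omega)}^2-\int_\Omega F(x,u)-\psi(u)\int_\Omega gu$, where $\psi$ is a cutoff of the even part $J(u):=\tfrac12\|Xu\|_{L^2(\Omega)}^2-\int_\Omega F(x,u)$ normalized by $(1+J(u)^2)^{1/2}$, chosen so that: (i) $\tilde I\equiv I$ and $\tilde I'\equiv I'$ on the region where $\tilde I$ is large; (ii) every critical point of $\tilde I$ with sufficiently large critical value is a critical point of $I$, hence a weak solution of \eqref{problem1-1}; (iii) $\tilde I$ still satisfies the deformation/Palais--Smale properties needed for minimax; and (iv) --- the crucial near-evenness estimate --- on $\operatorname{supp}\psi$ one has $\|u\|_{L^q(\Omega)}\lesssim 1+|\tilde I(u)|^{1/q}$. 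Property (iv) is exactly where $q>2$ is used: it forces $L^2(\Omega)\subset L^{q'}(\Omega)$ with $q'=q/(q-1)$, so $|\int_\Omega gu|\le\|g\|_{L^{q'}}\|u\|_{L^q}\lesssim 1+|\tilde I(u)|^{1/q}$, whence $|\tilde I(u)-\tilde I(-u)|\lesssim 1+|\tilde I(u)|^{1/q}$; the exponent $1/q$ (rather than the crude $1/2$) is what makes condition (A) sufficient.

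With $\{e_k\}$ the Dirichlet eigenfunctions of $-\triangle_X$, $E_m=\operatorname{span}\{e_1,\dots,e_m\}$, and $D_m\subset E_{m+1}$ the usual half-ball, define $c_m=\inf_{h\in\Gamma_m}\max_{u\in D_m}\tilde I(h(u))$ over Rabinowitz's class $\Gamma_m$ of continuous maps $D_m\to E$ that are odd on $E_m$ and equal the identity on the relevant part of $\partial D_m$. Two estimates drive the argument. First, a Borsuk--Ulam intersection lemma gives $c_m\ge\inf\{I(u):u\in E_m^{\perp},\ \|Xu\|_{L^2(\Omega)}=\rho\}$ for a freely chosen radius $\rho$; estimating $\int F$ by $(f_2)$, interpolating $\|u\|_{L^p}$ between $L^2$ and $L^{2\tilde\nu/(\tilde\nu-2)}$ on $E_m^{\perp}$ (where $\|u\|_{L^2}\le\lambda_{m+1}^{-1/2}\|Xu\|_{L^2(\Omega)}$), and optimizing in $\rho$ yields
\[
c_m\ \gtrsim\ \lambda_{m+1}^{\frac{p}{p-2}-\frac{\tilde\nu}{2}}\ \gtrsim\ m^{\,\frac{2p}{\tilde\nu(p-2)}-1},
\]
the last step by \cite{chen-chen2019}; note $(f_2)$ makes the exponent positive. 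Second, the standard Rabinowitz comparison shows that whenever $c_{m+1}-c_m$ exceeds a fixed multiple of $1+c_{m+1}^{1/q}$, the level $c_m$ is a critical value of $\tilde I$; hence if no $c_m$ with $m$ large were a critical value, then $c_{m+1}-c_m\lesssim 1+c_{m+1}^{1/q}$ for all large $m$, and iterating this differential inequality gives $c_m\lesssim m^{q/(q-1)}$. Comparing the two displays, condition (A), $\frac{2p}{\tilde\nu(p-2)}-1>\frac{q}{q-1}$, makes the two growth rates incompatible, so $c_m$ must be a critical value of $\tilde I$ for infinitely many $m$; by (ii) these furnish weak solutions of \eqref{problem1-1}, and since $c_m\to\infty$ the resulting sequence is unbounded in $H^1_{X,0}(\Omega)$.

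I expect the main obstacle to be verifying properties (i)--(iv) of the truncated functional in the subelliptic framework --- in particular the deformation lemma for $\tilde I$ and the sharp exponent $1/q$ in (iv) --- together with the careful bookkeeping in the intersection lemma that produces the constant $\tfrac p{p-2}-\tfrac{\tilde\nu}{2}$. By contrast, the two "degenerate" inputs, namely the compactness $H^1_{X,0}(\Omega)\hookrightarrow\hookrightarrow L^p(\Omega)$ and the eigenvalue growth $\lambda_m\gtrsim m^{2/\tilde\nu}$, are supplied ready-made by the degenerate Rellich--Kondrachov theorem and by \cite{chen-chen2019}, so the remaining work is to port the classical machinery faithfully.
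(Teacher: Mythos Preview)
Your proposal is correct and follows the same overall scheme as the paper: Rabinowitz's perturbation-from-symmetry argument, fed by the degenerate Rellich--Kondrachov compactness and the eigenvalue lower bound $\lambda_k\gtrsim k^{2/\tilde\nu}$. A few points of divergence are worth flagging, since they are exactly where your anticipated ``main obstacle'' lies.

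First, the truncation. The paper does \emph{not} cut off against the even part $J$; instead it sets $\psi(u)=\chi\bigl(Q(u)^{-1}\int_\Omega(F(x,u)+a_2)\,dx\bigr)$ with $Q(u)=2A\,(E_1(u)^2+1)^{1/2}$, where $E_1$ is the \emph{full} (non-even) functional. The reason is Proposition~4.2: at any critical point of $E_1$ one has $\int(F+a_2)\le A(E_1^2+1)^{1/2}$, so $\psi\equiv1$ near such points, which is what delivers your property~(ii). Cutting off by $J$ alone would not obviously give this. The near-evenness bound (your (iv)) is then obtained from $\|u\|_{L^q}^q\lesssim\int(F+a_2)\lesssim |E_1(u)|+1$ on $\operatorname{supp}\psi$, combined with $|E_1|\le|E_2|+|\int gu|$; this is how the exponent $1/q$ enters.

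Second, the minimax architecture. The paper uses \emph{two} families --- $b_k$ over odd maps of the full ball $D_k=\overline{B_{R_k}}\cap W_k$, and $c_k$ over maps on the half-ball $U_k\subset W_{k+1}$ that are odd only on $D_k$ --- together with a third refined value $c_k(\delta)$. The intersection/lower-bound argument (your first displayed estimate) is run on $b_k$, while the critical-value detection (Proposition~4.5) needs $c_k>b_k$; merging them into a single sequence, as you do, is a known variant but requires some care in stating exactly which comparison produces a critical value.

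Third, the endgame. Where you iterate $c_{m+1}-c_m\lesssim 1+c_{m+1}^{1/q}$ to get $c_m\lesssim m^{q/(q-1)}$ and then compare exponents, the paper instead substitutes the already-proved lower bound $b_k\gtrsim k^{\frac{2p}{\tilde\nu(p-2)}-1}$ into the product form $b_{k_2+l}\le b_{k_2}\exp\bigl(2B_1\sum b_k^{(1-q)/q}\bigr)$ and uses condition~(A) to make the series converge, forcing $b_k$ bounded. Both routes are valid and both pinpoint condition~(A); your version is arguably more transparent about where the threshold $\tfrac{q}{q-1}$ comes from.
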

\begin{remark}
The condition (A): $\frac{2p}{\tilde{\nu}(p-2)}-1>\frac{q}{q-1}$ with $2<p<\frac{2\tilde{\nu}}{\tilde{\nu}-2}$ and $q>2$ in Theorem \ref{thm1-2} will be equivalent to $\left(2+\frac{1}{q-1}\right)\left(\frac{1}{2}-\frac{1}{p}\right)<\frac{1}{\tilde{\nu}}$. Particularly, we choose $q=p>2$, then we have $\left(2+\frac{1}{p-1}\right)\left(\frac{1}{2}-\frac{1}{p}\right)<\frac{1}{\tilde{\nu}}$, which implies that $(2\tilde{\nu}-2)p^{2}-(5\tilde{\nu}-2)p+2\tilde{\nu}<0$, i.e. $p$ satisfies $\frac{(5\tilde{\nu}-2)-\sqrt{9\tilde{\nu}^{2}-4\tilde{\nu}+4}}{4(\tilde{\nu}-1)}<p<\frac{(5\tilde{\nu}-2)
+\sqrt{9\tilde{\nu}^{2}-4\tilde{\nu}+4}}{4(\tilde{\nu}-1)}$. For $\tilde{\nu}\geq 3$, we can deduce that
$$
\frac{(5\tilde{\nu}-2)-\sqrt{9\tilde{\nu}^{2}-4\tilde{\nu}+4}}{4(\tilde{\nu}-1)}<2< \frac{(5\tilde{\nu}-2)+\sqrt{9\tilde{\nu}^{2}-4\tilde{\nu}+4}}{4(\tilde{\nu}-1)}
<\frac{2\tilde{\nu}}{\tilde{\nu}-2}.
$$
That means if we choose $2<q=p<\frac{(5\tilde{\nu}-2)+\sqrt{9\tilde{\nu}^{2}-4\tilde{\nu}+4}}{4(\tilde{\nu}-1)}$ for $\tilde{\nu}\geq 3$, then the condition (A) in Theorem \ref{thm1-2} will be satisfied. For example, for $n=2$, $X=(\partial_{x_1},x_1\partial_{x_2})$, then the M\'etivier's condition will be not satisfied and we can deduce that $\tilde{\nu}=3$, and $\frac{2\tilde{\nu}}{\tilde{\nu}-2}=6$. Then $\frac{(5\tilde{\nu}-2)+\sqrt{9\tilde{\nu}^{2}-4\tilde{\nu}+4}}{4(\tilde{\nu}-1)}=\frac{13+\sqrt{73}}{8}\geq 2.693$. If we choose $2<q=p<\frac{13+\sqrt{73}}{8}$, and $f(x,u)=|u|^{p-2}u$, then all conditions in Theorem \ref{thm1-1} and Theorem \ref{thm1-2} will be satisfied.
\end{remark}

\begin{remark}
If $\triangle_{X}=\triangle$, then the generalized M\'{e}tivier index $\tilde{\nu}=n$ and Theorem \ref{thm1-2} recovers to the Rabinowitz's classical results in \cite{Rabinowitz1982}.
\end{remark}
\begin{remark}
For degenerate case, if the H\"{o}rmander's condition is invalid, Morimoto and Xu \cite{Xu03} introduced a class of infinitely degenerate elliptic operators with logarithmic regularity estimate and studied the corresponding semilinear Dirichlet problems. They gained the existence and regularity of weak solution of the semilinear Dirichlet problems for infinitely degenerate elliptic operator without free perturbation terms. In recent years, their results have been generalized by Chen and Li in \cite{Li2009}. Moreover, Chen, Luo and Tian investigated the infinitely degenerate semilinear elliptic equations with a free perturbation term in \cite{Luo2015}.
\end{remark}

The rest of our paper is organized as follows. In Section \ref{Section2}, we introduce some preliminaries including the results for the finitely degenerate vector fields $X$ and minimax method in critical point theory. In Section \ref{Section3}, by using the degenerate Rellich–Kondrachov compact embedding theorem and the mountain pass theorem, we prove the existence and multiplicity of weak solutions for the problem \eqref{problem1-1} without the perturbation term. Finally, we study the problem \eqref{problem1-1} with non-vanishing  perturbation term in Section \ref{Section4}. With the help of lower bound estimates of Dirichlet eigenvalues of $-\triangle_{X}$, degenerate Rellich–Kondrachov compact embedding theorem and minimax method, we obtain that the problem \eqref{problem1-1} has infinitely many nontrivial unbounded weak solutions.

\section{Preliminaries}
\label{Section2}
 In this section, we introduce some results related to the finitely degenerate vector fields $X$ and minimax method in critical point theory.

 \subsection{Some results for the vector fields under the H\"{o}rmander's condition}
If the vector fields $X=(X_{1},X_{2},\cdots, X_{m})$ satisfy the H\"{o}rmander's condition (H) in an open domain $W$ in $\mathbb{R}^n$ and $\Omega\subset\subset W$, then there exists a smallest positive integer $Q\geq 1$
   such that the vector fields $X_{1},X_{2},\ldots,X_{m}$ together with their commutators
   of length at most $Q$ span the tangent space $T_{x}(W)$ at each point  $x\in \overline{\Omega}$. The integer $Q$ is called the  H\"{o}rmander's index of $\overline{\Omega}$ with respect to $X$.

    For the vector fields $X$ under the H\"{o}rmander's condition (H), M\'{e}tivier \cite{Metivier1976} gave the following stronger assumption in 1976.

 \begin{definition}[M\'{e}tivier's condition (cf. \cite{Metivier1976})]
    \label{def2-1}
   Let $\Omega\subset\subset W$ be a bounded connected open subset. Suppose that the vector fields $X$ satisfy H\"{o}rmander's condition (H) on an open domain $W$ in $\mathbb{R}^n$ and have H\"{o}rmander's index $Q$ on $\overline{\Omega}$. For each $x\in \overline{\Omega}$, let $V_{j}(x)~ (1\leq j\leq Q)$
be the subspaces of the tangent space at $x$  spanned by all
commutators of $X_{1},\ldots,X_{m}$ with length at most $j$. If the dimension of $V_{j}(x)$ is constant $\nu_{j}$ in a neighborhood of each $x\in \overline{\Omega}$, then we say the vector fields $X$ satisfy the M\'{e}tivier's condition on $\Omega$. Moreover, the M\'{e}tivier index is defined as
\begin{equation}\label{2-1}
  \nu=\sum_{j=1}^{Q}j(\nu_{j}-\nu_{j-1}),\qquad \nu_{0}:=0,
\end{equation}
which is also called the Hausdorff dimension of $\Omega$ related to the subelliptic metric induced by the vector fields $X$.
\end{definition}

It is worth mentioning that the M\'{e}tivier's condition is not only an important assumption on study of finitely degenerate elliptic operator, but also a crucial condition in sub-Riemannian geometry. The M\'etivier's condition possesses
a strong restriction on the vector fields $X$ satisfying
H\"ormander's condition, in which the Lie algebra generated by
the vector fields $X_{1},X_{2},\ldots,X_{m}$ takes a constant
structure and the vector fields can be locally approximated by some
homogeneous left invariant vector fields defined on the
corresponding Carnot group (cf. \cite{stein1976}). However, there exist many vector fields satisfying the H\"ormander's condition but without the restriction of the M\'{e}tivier's condition (e.g. the Grushin vector fields $X_{1}=\partial_{x_{1}}, X_{2}=x_{1}\partial_{x_{2}}$). In this general case, we need to introduce the following generalized M\'{e}tivier's index, which is also called the non-isotropic dimension of $\Omega$ related to $X$ by \cite{Yung2015}.

\begin{definition}[Generalized M\'{e}tivier's index (cf. \cite{chen-chen2019,chenluo})]
\label{def2-2}
 With the same
notations as Definition \ref{def2-1}, we denote by $\nu_{j}(x)$ the dimension of vector space $V_{j}(x)$ at point $x\in \overline{\Omega}$. The pointwise homogeneous dimension at $x$ is given
by
\begin{equation}\label{2-2}
  \nu(x):=\sum_{j=1}^{Q}j(\nu_{j}(x)-\nu_{j-1}(x)),\qquad \nu_{0}(x):=0.
\end{equation}
Then we define
\begin{equation}\label{2-3}
  \tilde{\nu}:=\max_{x\in\overline{\Omega}} \nu(x)
\end{equation}
as the generalized M\'{e}tivier index of $\Omega$. Observe that \eqref{2-2} implies  $n+Q-1\leq \tilde{\nu}< nQ$ for $Q>1$, and  $\tilde{\nu}=\nu$ if the M\'{e}tivier's condition is satisfied.
\end{definition}

Now, we introduce the following weighted Sobolev embedding theorem and weighted Poincar\'{e} inequality related to vector fields $X$.
\begin{proposition}[Weighted Sobolev Embedding Theorem]
 \label{pro2-1}
 Let $X$ and $\Omega$ satisfy the same assumptions as Theorem \ref{thm1-1}. Denote by $\tilde{\nu}$ the generalized M\'{e}tivier index of $X$ on $\Omega$.
   Then for $1\leq p<\tilde{\nu}$,  there exists a constant $C=C(\Omega,X)>0$, such that for all
    $u\in C^{\infty}(\overline{\Omega})$, the inequality
\begin{equation}\label{2-4}
\|u\|_{L^{q}(\Omega)}\leq C\left(\|Xu\|_{L^{p}(\Omega)}+\|u\|_{L^{p}(\Omega)}\right)
\end{equation}
holds for $q=\frac{\tilde{\nu}p}{\tilde{\nu}-p}$.
\end{proposition}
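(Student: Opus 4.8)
The plan is to deduce \eqref{2-4} from three facts that are by now classical for H\"{o}rmander systems: the Nagel--Stein--Wainger (NSW) volume estimates for Carnot--Carath\'{e}odory (CC) balls \cite{Stein1985}, Jerison's Poincar\'{e} inequality \cite{Jerison1986}, and the standard mechanism that upgrades a Poincar\'{e} inequality on a doubling metric measure space to a Sobolev--Poincar\'{e} inequality once one knows a polynomial lower bound for the volumes of small balls. The generalized M\'{e}tivier index $\tilde{\nu}$ enters the argument at exactly one point, namely as the exponent in that lower volume bound, and this is precisely what pins down the target exponent $q=\frac{\tilde{\nu}p}{\tilde{\nu}-p}$. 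In essence this is Yung's sharp subelliptic embedding \cite{Yung2015}; below I indicate the route.

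\emph{Localization and the key volume bound.} Since $\overline{\Omega}$ is compact and $\partial\Omega$ is smooth and non-characteristic for $X$, I first fix an intermediate domain $\Omega\subset\subset\Omega'\subset\subset W$ together with a bounded extension operator $E\colon H^{1}_{X}(\Omega)\to H^{1}_{X}(\Omega')$ (available under these hypotheses); since $Eu|_{\Omega}=u$, it then suffices to bound $\|Eu\|_{L^{q}(\Omega)}$ by a constant times $\|Eu\|_{H^{1}_{X}(\Omega')}$ and invoke the boundedness of $E$. By Chow's theorem the CC-distance $d$ attached to $X$ is a genuine metric near $\overline{\Omega'}$, it induces the Euclidean topology, and $\overline{\Omega'}$ has finite $d$-diameter; fix $R_{0}>0$ so small that $B(x,2R_{0})\subset\subset W$ for every $x\in\overline{\Omega'}$, where $B(x,r)$ denotes the CC-ball. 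By the NSW estimates one has volume doubling $|B(x,2r)|\leq C_{D}|B(x,r)|$ for $x\in\overline{\Omega'}$, $0<r\leq R_{0}$; more importantly, the NSW formula $|B(x,r)|\approx\sum_{I}|\lambda_{I}(x)|\,r^{\ell(I)}$ --- summed over $n$-tuples $I$ of iterated commutators of total length $\ell(I)\leq Q$, with $\lambda_{I}$ the associated Jacobian-type minors --- shows that in a neighborhood of any $x_{0}$ there is a tuple $I_{0}$ spanning the tangent space with $\ell(I_{0})=\nu(x_{0})$ and $\lambda_{I_{0}}$ bounded away from $0$. Since $\nu(x)\leq\tilde{\nu}$ for every $x$ (this is built into Definition \ref{def2-2}) and $r<1$, a compactness argument on $\overline{\Omega'}$ then yields a constant $c>0$ with
\begin{equation}\label{volbdd}
|B(x,r)|\ \geq\ c\,r^{\tilde{\nu}},\qquad x\in\overline{\Omega'},\quad 0<r\leq R_{0}.
\end{equation}
This is the only place where the generalized M\'{e}tivier index is used.

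\emph{From Poincar\'{e} to Sobolev, then globalization.} Jerison's Poincar\'{e} inequality (in its $L^{p}$ form) gives constants $C>0$, $\sigma\geq1$ with $\frac{1}{|B|}\int_{B}|w-w_{B}|^{p}\,dy\leq C r^{p}\,\frac{1}{|\sigma B|}\int_{\sigma B}|Xw|^{p}\,dy$ for all balls $B=B(x,r)$ with $x\in\overline{\Omega'}$, $0<r\leq R_{0}/\sigma$, where $w_{B}:=\frac{1}{|B|}\int_{B}w$. The space $(\Omega',d,dx)$ is doubling on scales $\leq R_{0}$ and satisfies the lower bound \eqref{volbdd}; hence, by the now-standard technique on spaces of homogeneous type (Haj{\l}asz--Koskela, Franchi--Lu--Wheeden, Capogna--Danielli--Garofalo) that represents $w(y)-w_{B}$ as a fractional integral of $|Xw|$ with kernel $\lesssim d(y,z)^{1-\tilde{\nu}}$ by \eqref{volbdd} and then invokes the fractional-integration theorem (with the usual truncation trick at the endpoint $p=1$), one obtains the local Sobolev--Poincar\'{e} inequality $\big(\frac{1}{|B|}\int_{B}|w-w_{B}|^{q}\,dy\big)^{1/q}\leq C r\,\big(\frac{1}{|\Lambda B|}\int_{\Lambda B}|Xw|^{p}\,dy\big)^{1/p}$ with $q=\frac{\tilde{\nu}p}{\tilde{\nu}-p}$, for a fixed dilation $\Lambda\geq1$ and all balls of radius $\leq R_{0}/\Lambda$ centered in $\overline{\Omega'}$. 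Now cover the compact set $\overline{\Omega}$ by finitely many such balls $B_{1},\dots,B_{N}$ of radii $r_{i}\leq R_{0}/\Lambda$, bounded overlap, and with every $\Lambda B_{i}\subset\Omega'$; applying the last inequality to $w:=Eu$ on each $B_{i}$, summing (legitimate because $q\geq p$, with \eqref{volbdd} absorbing the normalizing factors), and telescoping the averages $w_{B_{i}}$ along chains of overlapping CC-balls (of controlled length, since $\Omega'$ is connected and $d$-bounded), one obtains $\|w-\kappa\|_{L^{q}(\Omega)}\leq C\|Xw\|_{L^{p}(\Omega')}$ for a single constant $\kappa$. Finally $|\kappa|\leq C|B_{1}|^{-1/p}\|w\|_{L^{p}(\Omega')}$ by H\"{o}lder's inequality and $|\Omega|<\infty$, so $\|u\|_{L^{q}(\Omega)}=\|w\|_{L^{q}(\Omega)}\leq C(\|Xw\|_{L^{p}(\Omega')}+\|w\|_{L^{p}(\Omega')})\leq C(\|Xu\|_{L^{p}(\Omega)}+\|u\|_{L^{p}(\Omega)})$, which is \eqref{2-4}.

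\emph{Main obstacle.} The conceptual crux is the uniform volume bound \eqref{volbdd}: one must pass from the algebraic data $\nu(x)=\sum_{j}j(\nu_{j}(x)-\nu_{j-1}(x))$ to the non-vanishing, on a whole neighborhood, of some minor $\lambda_{I}(x)$ with $\ell(I)=\nu(x)$ in the NSW volume formula, and then use $\max_{x\in\overline{\Omega}}\nu(x)=\tilde{\nu}$ together with compactness to make the estimate uniform on $\overline{\Omega'}$. Everything afterwards --- the passage from Poincar\'{e} to Sobolev and the globalization --- is technically heavier but conceptually routine once \eqref{volbdd} is in hand, so no genuinely new idea beyond the volume bound is required.
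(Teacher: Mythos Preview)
The paper does not actually prove this proposition: its entire ``proof'' is the single sentence ``See Corollary 1 in \cite{Yung2015}.'' Your proposal, by contrast, sketches an actual argument --- NSW volume estimates to obtain the lower bound $|B(x,r)|\geq c\,r^{\tilde\nu}$, Jerison's Poincar\'e inequality, the Haj\l asz--Koskela/Capogna--Danielli--Garofalo mechanism upgrading Poincar\'e to Sobolev--Poincar\'e on doubling spaces, and a covering/chaining argument to globalize. This is essentially the route taken in Yung's paper itself, so in spirit you are reconstructing the cited result rather than offering a genuinely different proof; you also correctly isolate the one conceptual point, namely that $\tilde\nu$ enters solely through the uniform volume lower bound via the NSW polynomial and compactness of $\overline{\Omega}$. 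The only step that deserves a caveat is your appeal to a bounded extension operator $E\colon H^{1}_{X}(\Omega)\to H^{1}_{X}(\Omega')$: such an operator does exist when $\partial\Omega$ is smooth and non-characteristic for $X$, but it is a nontrivial fact in its own right (and not stated elsewhere in the paper), so if you want the sketch to be self-contained you should either cite a source for it or note that, since the statement is only for $u\in C^{\infty}(\overline{\Omega})$, one can instead work with a smooth extension and control its $H^{1}_{X}$-norm on a slightly larger domain directly.
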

\begin{proof}
See Corollary 1 in \cite{Yung2015}.
\end{proof}
\begin{corollary}
\label{corollary2-1}
In particular, if $\tilde{\nu}\geq n+Q-1\geq 3$. Putting $p=2$ into  Proposition \ref{pro2-1}, we can deduce that the embedding
\begin{equation}
\label{2-5}
H_{X,0}^{1}(\Omega)\hookrightarrow L^{q}(\Omega)
\end{equation}
is bounded for $1\leq q\leq \frac{2\tilde{\nu}}{\tilde{\nu}-2}:=2_{\tilde{\nu}}^{*}$.
\end{corollary}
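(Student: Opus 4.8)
The plan is to obtain Corollary~\ref{corollary2-1} directly from Proposition~\ref{pro2-1} by specializing $p=2$, then upgrading from smooth functions to $H^{1}_{X,0}(\Omega)$ by density and covering the subcritical exponents with H\"older's inequality on the bounded set $\Omega$. First I would note that the hypothesis $\tilde{\nu}\geq n+Q-1\geq 3$ guarantees $\tilde{\nu}>2$, so $p=2$ lies in the admissible range $1\leq p<\tilde{\nu}$ of Proposition~\ref{pro2-1}, and the exponent $2^{*}_{\tilde{\nu}}=\frac{2\tilde{\nu}}{\tilde{\nu}-2}$ is a finite real number strictly larger than $2$. Applying \eqref{2-4} with $p=2$ and $q=2^{*}_{\tilde{\nu}}$ gives a constant $C=C(\Omega,X)>0$ with
\begin{equation*}
\|u\|_{L^{2^{*}_{\tilde{\nu}}}(\Omega)}\leq C\left(\|Xu\|_{L^{2}(\Omega)}+\|u\|_{L^{2}(\Omega)}\right)\leq C\|u\|_{H^{1}_{X}(W)}
\end{equation*}
for every $u\in C^{\infty}(\overline{\Omega})$, and in particular for every $u\in C^{\infty}_{0}(\Omega)$, whose $L^{r}$-norms over $\Omega$ and over $W$ coincide since such $u$ are supported in $\Omega$.

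Next I would extend this inequality to all of $H^{1}_{X,0}(\Omega)$. Given $u\in H^{1}_{X,0}(\Omega)$, choose $u_{k}\in C^{\infty}_{0}(\Omega)$ with $u_{k}\to u$ in $H^{1}_{X}(W)$; then $(u_{k})$ is Cauchy in $L^{2^{*}_{\tilde{\nu}}}(\Omega)$ by the displayed estimate, hence $u_{k}\to v$ in $L^{2^{*}_{\tilde{\nu}}}(\Omega)$ for some $v$. Since $\Omega$ is bounded, convergence in both $L^{2^{*}_{\tilde{\nu}}}(\Omega)$ and $L^{2}(\Omega)$ forces $v=u$ a.e., so $u\in L^{2^{*}_{\tilde{\nu}}}(\Omega)$ and, passing to the limit,
\begin{equation*}
\|u\|_{L^{2^{*}_{\tilde{\nu}}}(\Omega)}\leq C\|u\|_{H^{1}_{X}(W)}\qquad\text{for all }u\in H^{1}_{X,0}(\Omega),
\end{equation*}
which is precisely the boundedness of the embedding \eqref{2-5} in the endpoint case $q=2^{*}_{\tilde{\nu}}$.

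Finally, for $1\leq q<2^{*}_{\tilde{\nu}}$ I would use the boundedness of $\Omega$: H\"older's inequality with exponents $\frac{2^{*}_{\tilde{\nu}}}{q}$ and its conjugate yields
\begin{equation*}
\|u\|_{L^{q}(\Omega)}\leq |\Omega|^{\frac{1}{q}-\frac{1}{2^{*}_{\tilde{\nu}}}}\|u\|_{L^{2^{*}_{\tilde{\nu}}}(\Omega)}\leq C|\Omega|^{\frac{1}{q}-\frac{1}{2^{*}_{\tilde{\nu}}}}\|u\|_{H^{1}_{X}(W)},
\end{equation*}
so $H^{1}_{X,0}(\Omega)\hookrightarrow L^{q}(\Omega)$ is bounded for every $1\leq q\leq 2^{*}_{\tilde{\nu}}$. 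I do not expect a genuine obstacle: the only points needing a little care are verifying that $p=2$ is admissible in Proposition~\ref{pro2-1} — this is exactly where $\tilde{\nu}\geq 3$ is used — and the density/identification step, both of which are standard.
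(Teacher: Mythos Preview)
Your proposal is correct and follows exactly the approach the paper implicitly intends: the paper states Corollary~\ref{corollary2-1} without proof, treating it as an immediate consequence of Proposition~\ref{pro2-1} with $p=2$, and your argument simply fills in the standard density and H\"older interpolation details that the authors omit.
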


\begin{proposition}[Weighted Poincar\'{e} Inequality]
\label{Poincare}
 Suppose that $X$ and $\Omega$ satisfy the same assumptions as Theorem \ref{thm1-1}. Then the first Dirichlet eigenvalue $\lambda_{1}$ of self-adjoint operator $-\triangle_{X}$ is positive. Moreover, we have the following weighted Poincar\'{e} inequality
\begin{equation}\label{2-6}
  \lambda_{1}\int_{\Omega}{|u|^2dx}\leq \int_{\Omega}|Xu|^2dx,~~ \forall u\in H_{X,0}^{1}(\Omega).
  \end{equation}
\end{proposition}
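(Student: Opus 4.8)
The plan is to define $\lambda_{1}$ through the Rayleigh quotient, observe that the Poincar\'{e} inequality is then essentially a tautology, and devote the real work to proving positivity of $\lambda_{1}$. Set
\[
\lambda_{1}:=\inf_{u\in H_{X,0}^{1}(\Omega)\setminus\{0\}}\frac{\int_{\Omega}|Xu|^{2}\,dx}{\int_{\Omega}|u|^{2}\,dx}.
\]
By construction $\lambda_{1}\ge 0$ and $\lambda_{1}\int_{\Omega}|u|^{2}\,dx\le\int_{\Omega}|Xu|^{2}\,dx$ for every $u\in H_{X,0}^{1}(\Omega)$, which is the desired inequality. Moreover, the degenerate Rellich--Kondrachov compact embedding $H_{X,0}^{1}(\Omega)\hookrightarrow\hookrightarrow L^{2}(\Omega)$ (valid since $2<2_{\tilde{\nu}}^{*}$, cf.\ Corollary \ref{corollary2-1}) makes $-\triangle_{X}$ a self-adjoint operator with compact resolvent, so its Dirichlet spectrum is discrete and $\lambda_{1}$ is precisely its smallest eigenvalue. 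It thus remains to show $\lambda_{1}>0$.

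First I would check that the infimum is attained. Take a minimizing sequence $\{u_{k}\}\subset H_{X,0}^{1}(\Omega)$ normalized by $\|u_{k}\|_{L^{2}(\Omega)}=1$ and with $\|Xu_{k}\|_{L^{2}(\Omega)}^{2}\to\lambda_{1}$. Then $\{u_{k}\}$ is bounded in $H_{X,0}^{1}(\Omega)$, so up to a subsequence $u_{k}\rightharpoonup u_{0}$ weakly in $H_{X,0}^{1}(\Omega)$ and, by the compact embedding, $u_{k}\to u_{0}$ strongly in $L^{2}(\Omega)$; hence $\|u_{0}\|_{L^{2}(\Omega)}=1$, so $u_{0}\not\equiv 0$. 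By weak lower semicontinuity of $u\mapsto\|Xu\|_{L^{2}(\Omega)}^{2}$ we get $\|Xu_{0}\|_{L^{2}(\Omega)}^{2}\le\liminf_{k\to\infty}\|Xu_{k}\|_{L^{2}(\Omega)}^{2}=\lambda_{1}$, so $u_{0}$ realizes the infimum and $\lambda_{1}=\|Xu_{0}\|_{L^{2}(\Omega)}^{2}$.

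Now suppose, for contradiction, that $\lambda_{1}=0$. Then $X_{j}u_{0}=0$ a.e.\ in $\Omega$ for every $j$, so $-\triangle_{X}u_{0}=\sum_{i=1}^{m}X_{i}^{*}X_{i}u_{0}=0$ in the distributional sense. By H\"{o}rmander's hypoellipticity theorem \cite{hormander1967}, $u_{0}\in C^{\infty}(\Omega)$, hence $X_{j}u_{0}\equiv 0$ pointwise; iterating, every commutator of $X_{1},\dots,X_{m}$ annihilates $u_{0}$ as well. Since by the H\"{o}rmander condition (H) the vector fields together with their commutators of length at most $Q$ span $T_{x}(\Omega)$ at each $x$, it follows that $\nabla u_{0}\equiv 0$ on $\Omega$, and so $u_{0}$ is constant because $\Omega$ is connected. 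Finally, a nonzero constant does not lie in $H_{X,0}^{1}(\Omega)$: since $\partial\Omega$ is smooth and non-characteristic for $X$, the trace operator on $H_{X}^{1}(\Omega)$ is well defined and vanishes on $H_{X,0}^{1}(\Omega)$, forcing the constant value $u_{0}$ to be $0$. This contradicts $\|u_{0}\|_{L^{2}(\Omega)}=1$, so $\lambda_{1}>0$, and the proof is complete.

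The step I expect to be the genuine obstacle is the last one: that a constant function belonging to $H_{X,0}^{1}(\Omega)$ must be identically zero. For the ordinary gradient this is elementary, but in the subelliptic setting it rests on a trace theorem for the weighted space $H_{X}^{1}(\Omega)$ and really uses the hypothesis that $\partial\Omega$ is smooth and non-characteristic; an alternative route is to invoke Bony's strong maximum principle \cite{Bony1969} for $\triangle_{X}$ to conclude directly that any $u_{0}\in H_{X,0}^{1}(\Omega)$ solving $-\triangle_{X}u_{0}=0$ vanishes. The hypoellipticity together with the spanning argument is routine once the H\"{o}rmander condition is in force, and the compactness and lower-semicontinuity part is standard direct-method material.
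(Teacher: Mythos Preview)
Your argument is correct and complete. The paper itself does not prove this proposition at all: it simply cites Proposition~2.2 of \cite{chen-chen2019} and Lemma~3.2 of \cite{Jost1998}. So you have supplied a genuine, self-contained proof where the paper defers to the literature; your direct-method argument (minimizing sequence plus compact embedding to get a minimizer, then hypoellipticity and the H\"ormander spanning condition to force the minimizer to be constant, then a boundary argument to rule out nonzero constants) is the standard and correct route, and you have correctly identified the only delicate point, namely that a nonzero constant cannot lie in $H_{X,0}^{1}(\Omega)$. Two minor remarks: (i) the compact embedding $H_{X,0}^{1}(\Omega)\hookrightarrow L^{2}(\Omega)$ you invoke is Corollary~\ref{corollary2-2} (via the subelliptic estimate of Proposition~\ref{subelliptic-estimate}), not Corollary~\ref{corollary2-1}, which only gives boundedness; (ii) there is no circularity in using that embedding here, since Proposition~\ref{subelliptic-estimate} and Corollary~\ref{corollary2-2} do not rely on the Poincar\'e inequality, even though they appear after it in the exposition.
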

\begin{proof}
See Proposition 2.2 in \cite{chen-chen2019} and Lemma 3.2 in \cite{Jost1998}.

\end{proof}

On the other hand, we have the following subelliptic estimates.

\begin{proposition}[Subelliptic Estimates]
\label{subelliptic-estimate}
Let the vector fields $X$  satisfy the H\"{o}rmander condition (H) in an open domain $W$ in $\mathbb{R}^n$. Then, for any open subset $\Omega\subset\subset W$, there exist constant $\epsilon_{0}>0$ and  $C>0$ such that
\begin{equation}\label{2-7}
  \|u\|_{H^{\epsilon_{0}}(\mathbb{R}^n)}^2\leq C\left(\|Xu\|_{L^2(\mathbb{R}^n)}^{2}+\|u\|_{L^2(\mathbb{R}^n)}^2\right), ~~\mbox{for all } u\in H_{X,0}^{1}(\Omega),
\end{equation}
where $\|u\|_{H^{s}(\mathbb{R}^n)}^2=\int_{\mathbb{R}^n}(1+|\xi|^{2})^{s}|\hat{u}(\xi)|^{2}d\xi$.
\end{proposition}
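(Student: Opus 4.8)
The plan is to reduce the estimate to smooth compactly supported functions and then to recognize \eqref{2-7} as H\"ormander's classical subelliptic estimate. Since $H_{X,0}^{1}(\Omega)$ is by definition the closure of $C_{0}^{\infty}(\Omega)$ in $H_{X}^{1}(W)$, it suffices to prove \eqref{2-7} for $u\in C_{0}^{\infty}(\Omega)$ and then pass to the limit: if $u_{k}\in C_{0}^{\infty}(\Omega)$ and $u_{k}\to u$ in $H_{X}^{1}(W)$, then $(u_{k})$ is Cauchy in $H_{X}^{1}$, hence by \eqref{2-7} applied to the differences it is Cauchy in $H^{\epsilon_{0}}(\mathbb{R}^{n})$; its $H^{\epsilon_{0}}$-limit must coincide with $u$ because $H^{\epsilon_{0}}(\mathbb{R}^{n})$ and $H_{X}^{1}(W)$ both embed continuously into $L^{2}$, so $u\in H^{\epsilon_{0}}(\mathbb{R}^{n})$ and \eqref{2-7} survives the limit. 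Moreover, since $-\triangle_{X}=\sum_{i=1}^{m}X_{i}^{*}X_{i}$, one has $(-\triangle_{X}u,u)_{L^{2}}=\sum_{i=1}^{m}\|X_{i}u\|_{L^{2}}^{2}=\|Xu\|_{L^{2}}^{2}$, so for $u\in C_{0}^{\infty}(\Omega)$ the inequality \eqref{2-7} is exactly the statement that the H\"ormander type operator $\triangle_{X}$ satisfies a subelliptic estimate of some positive order $\epsilon_{0}=\epsilon_{0}(Q)>0$. This is H\"ormander's theorem \cite{hormander1967} (see also \cite{stein1976,Bramanti2014}); one may take $\epsilon_{0}$ comparable to $2^{-Q}$, while the sharp value $\epsilon_{0}=1/Q$ follows from the Rothschild--Stein lifting and approximation theory \cite{stein1976}.

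For a self-contained argument I would reprove the subelliptic estimate by the standard bootstrap on the length of commutators. Writing $\Lambda^{s}$ for the Bessel potential of order $s$, so that $\|v\|_{H^{s}(\mathbb{R}^{n})}=\|\Lambda^{s}v\|_{L^{2}}$, the elementary commutator estimate, obtained by duality, integration by parts (the formal adjoint $X_{i}^{*}$ differing from $-X_{i}$ only by a smooth zeroth-order term), and the fact that a vector field commutes with $\Lambda^{s}$ modulo an operator of order at most $s$ with smooth coefficients, reads
\[
\bigl\|[X_{i},X_{j}]\phi\bigr\|_{H^{t}(\mathbb{R}^{n})}\ \lesssim\ \|X_{i}\phi\|_{H^{t+1}(\mathbb{R}^{n})}+\|X_{j}\phi\|_{H^{t+1}(\mathbb{R}^{n})}+\|\phi\|_{H^{t+1}(\mathbb{R}^{n})}
\]
for $\phi\in C_{0}^{\infty}(\Omega)$ and $t\leq 0$. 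Iterating this crude bound alone would cost a full derivative per commutator and breaks down after $Q$ steps; the heart of the argument is to refine it, via Peter--Paul and interpolation inequalities exploiting the interplay between $\|X_{i}\phi\|_{L^{2}}$ and the lower-order norms of $\phi$ (together with the trivial bound $\|Y\phi\|_{H^{-1}}\lesssim\|\phi\|_{L^{2}}$ for any vector field $Y$), so that only a bounded fraction of a derivative is lost at each stage. Propagating such a refined estimate through commutators of length $1,2,\dots,Q$, with the base case $\|X_{i}u\|_{L^{2}}\leq\|Xu\|_{L^{2}}$, yields $\|Yu\|_{H^{-1+\delta}(\mathbb{R}^{n})}\lesssim\|Xu\|_{L^{2}}+\|u\|_{L^{2}}$ for some $\delta>0$ and every commutator $Y$ of length at most $Q$. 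Finally, condition (H) provides, near each point of $\overline{\Omega}$, smooth coefficients expressing each $\partial_{x_{k}}$ as a combination of such commutators; a partition of unity on the compact set $\overline{\Omega}$, together with the boundedness of multiplication by smooth functions on $H^{-1+\delta}$, gives $\sum_{k=1}^{n}\|\partial_{x_{k}}u\|_{H^{-1+\delta}(\mathbb{R}^{n})}\lesssim\|Xu\|_{L^{2}}+\|u\|_{L^{2}}$, i.e. \eqref{2-7} with $\epsilon_{0}=\delta$.

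The step I expect to be the main obstacle is the refinement of the commutator estimate that produces only a partial loss of derivative: one has to extract the fractional gain while keeping careful track of all the smooth zeroth-order error terms produced by the integrations by parts and by commuting vector fields past $\Lambda^{s}$, so that the top-order contributions are genuinely absorbable (Peter--Paul) and the induction closes with $\epsilon_{0}$ strictly positive. A secondary technical point is the passage from commutators back to the coordinate derivatives via condition (H), which requires the relevant coefficient matrices to be controlled uniformly on $\overline{\Omega}$ through a partition of unity. Since all of this is by now classical, for the purposes of this paper it suffices to invoke \cite{hormander1967,stein1976,Bramanti2014}.
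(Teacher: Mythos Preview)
Your proposal is correct and in fact goes well beyond what the paper does: the paper's entire proof of this proposition is a single citation, ``See Theorem 17 in \cite{stein1976}.'' Your reduction to $C_{0}^{\infty}(\Omega)$ by density and your identification of \eqref{2-7} with the classical H\"ormander--Kohn subelliptic estimate (together with the sketch of the commutator bootstrap) are all standard and sound, but for the purposes of this paper the one-line citation to Rothschild--Stein suffices.
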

\begin{proof}
 See Theorem 17 in \cite{stein1976}.
\end{proof}
\begin{corollary}
\label{corollary2-2}
According to classical Sobolev embedding theory (cf. Proposition 4.4 in \cite{Taylor2011}), we know that the embedding $H^{\epsilon_{0}}(\Omega)\hookrightarrow L^2(\Omega)$ is compact. Therefore, the embedding $H_{X,0}^1(\Omega)\hookrightarrow L^2(\Omega)$ is compact.
\end{corollary}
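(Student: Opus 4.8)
The plan is to derive the compactness of $H_{X,0}^{1}(\Omega)\hookrightarrow L^{2}(\Omega)$ by composing the subelliptic gain of regularity supplied by Proposition \ref{subelliptic-estimate} with the classical (fractional order) Rellich–Kondrachov compactness theorem.

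First I would record that, by definition, every $u\in H_{X,0}^{1}(\Omega)$ is the $H^{1}_{X}(W)$-limit of a sequence in $C_{0}^{\infty}(\Omega)$; extending such functions by zero outside $\Omega$ identifies $u$ with an element of $H^{1}_{X}(\mathbb{R}^{n})$ supported in $\overline{\Omega}$, so the subelliptic estimate \eqref{2-7} applies and yields
\[
\|u\|_{H^{\epsilon_{0}}(\mathbb{R}^{n})}^{2}\leq C\left(\|Xu\|_{L^{2}(\mathbb{R}^{n})}^{2}+\|u\|_{L^{2}(\mathbb{R}^{n})}^{2}\right)=C\,\|u\|_{H^{1}_{X}(W)}^{2}.
\]
Hence the inclusion $\iota\colon H_{X,0}^{1}(\Omega)\to H^{\epsilon_{0}}(\mathbb{R}^{n})$ is a bounded linear operator.

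Next I would use that the restriction map $H^{\epsilon_{0}}(\mathbb{R}^{n})\to H^{\epsilon_{0}}(\Omega)$ is bounded (here the smoothness of $\partial\Omega$ is used) and that the embedding $H^{\epsilon_{0}}(\Omega)\hookrightarrow L^{2}(\Omega)$ is compact, since $\Omega$ is bounded with smooth boundary and $\epsilon_{0}>0$; this is the Rellich–Kondrachov theorem for Sobolev spaces of fractional order. The natural embedding $H_{X,0}^{1}(\Omega)\hookrightarrow L^{2}(\Omega)$ is precisely the composition of $\iota$ with these two maps, and the composition of a bounded operator with a compact operator is compact. Equivalently, if $\{u_{k}\}$ is bounded in $H_{X,0}^{1}(\Omega)$, the displayed estimate makes $\{u_{k}\}$ bounded in $H^{\epsilon_{0}}(\Omega)$, so a subsequence converges strongly in $L^{2}(\Omega)$, which is exactly the asserted compactness.

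I do not anticipate any real difficulty in this argument: the substance is already contained in Proposition \ref{subelliptic-estimate}, and what remains is routine functional analysis. The only points deserving care are the bookkeeping between functions on $W$, their zero-extensions to $\mathbb{R}^{n}$, and their restrictions to $\Omega$, together with the invocation of the correct fractional-order version of the Rellich–Kondrachov theorem rather than the integer-order one.
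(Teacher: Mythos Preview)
Your proposal is correct and follows essentially the same approach as the paper: the corollary in the paper is stated without a separate proof, and its content is precisely the composition you describe---bounded inclusion $H_{X,0}^{1}(\Omega)\hookrightarrow H^{\epsilon_0}$ from the subelliptic estimate (Proposition \ref{subelliptic-estimate}), followed by the classical fractional Rellich--Kondrachov compact embedding $H^{\epsilon_0}(\Omega)\hookrightarrow L^2(\Omega)$. Your additional care with zero-extensions and restrictions just makes explicit the routine bookkeeping that the paper leaves implicit.
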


With the help of Proposition \ref{Poincare} and Proposition \ref{subelliptic-estimate},  the following proposition can be derived.
\begin{proposition}
\label{prop2-4}
Assume that $X$ and $\Omega$ satisfy the same assumptions as Theorem \ref{thm1-1}.  Then the subelliptic Dirichlet problem
  \begin{equation}\label{2-8}
    \left\{
      \begin{array}{ll}
    -\triangle_{X}u=\lambda u   , & \hbox{$x\in \Omega$;} \\
        u=0, & \hbox{$x\in \partial\Omega$.}
      \end{array}
    \right.
  \end{equation}
has a sequence of discrete Dirichlet eigenvalues $0<\lambda_1\leq\lambda_2\leq\cdots\leq\lambda_{k-1}\leq\lambda_k\leq\cdots$,
and $\lambda_{k}\to +\infty $ as $k\to +\infty$. Moreover, the corresponding eigenfunctions $\{\varphi_{k}\}_{k=1}^{\infty}$ constitute an orthonormal basis of $L^2(\Omega)$ and also an orthogonal basis of $H_{X,0}^{1}(\Omega)$.
\end{proposition}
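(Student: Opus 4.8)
The plan is to deduce the statement from the spectral theory of a compact, self-adjoint, positive operator on $L^{2}(\Omega)$ obtained by inverting $-\triangle_{X}$ under zero boundary data. First I would equip $H_{X,0}^{1}(\Omega)$ with the bilinear form $a(u,v)=\int_{\Omega}Xu\cdot Xv\,dx=\sum_{j=1}^{m}\int_{\Omega}(X_{j}u)(X_{j}v)\,dx$. The weighted Poincar\'e inequality (Proposition \ref{Poincare}) gives $a(u,u)=\|Xu\|_{L^{2}(\Omega)}^{2}\geq \frac{\lambda_{1}}{1+\lambda_{1}}\|u\|_{H^{1}_{X}(\Omega)}^{2}$, while trivially $a(u,u)\leq \|u\|_{H^{1}_{X}(\Omega)}^{2}$, so $a$ is an inner product on $H_{X,0}^{1}(\Omega)$ whose induced norm is equivalent to $\|\cdot\|_{H^{1}_{X}(\Omega)}$; hence $(H_{X,0}^{1}(\Omega),a)$ is a Hilbert space. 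For each $f\in L^{2}(\Omega)$ the functional $v\mapsto \int_{\Omega}fv\,dx$ is bounded on $(H_{X,0}^{1}(\Omega),a)$ by Cauchy--Schwarz together with the boundedness of the embedding $H_{X,0}^{1}(\Omega)\hookrightarrow L^{2}(\Omega)$, so the Riesz representation theorem yields a unique $Tf\in H_{X,0}^{1}(\Omega)$ with $a(Tf,v)=\int_{\Omega}fv\,dx$ for all $v\in H_{X,0}^{1}(\Omega)$; that is, $Tf$ is the weak solution of $-\triangle_{X}u=f$, $u|_{\partial\Omega}=0$, and $T:L^{2}(\Omega)\to H_{X,0}^{1}(\Omega)$ is bounded and linear.

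Next let $K=\iota\circ T:L^{2}(\Omega)\to L^{2}(\Omega)$, where $\iota:H_{X,0}^{1}(\Omega)\hookrightarrow L^{2}(\Omega)$ is the inclusion, which is \emph{compact} by Corollary \ref{corollary2-2}; hence $K$ is a compact operator. It is self-adjoint: for $f,g\in L^{2}(\Omega)$ one has $\langle Kf,g\rangle_{L^{2}}=\int_{\Omega}(Tf)g\,dx=a(Tg,Tf)=a(Tf,Tg)=\int_{\Omega}f(Tg)\,dx=\langle f,Kg\rangle_{L^{2}}$, using the symmetry of $a$ (the $X_{j}$ are real). It is positive definite: $\langle Kf,f\rangle_{L^{2}}=a(Tf,Tf)\geq 0$, with equality only if $Tf=0$, which forces $\int_{\Omega}fv\,dx=0$ for all $v\in H_{X,0}^{1}(\Omega)$, hence for all $v\in C_{0}^{\infty}(\Omega)$, so $f=0$ by density of $C_{0}^{\infty}(\Omega)$ in $L^{2}(\Omega)$. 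In particular $K$ is injective, so $0$ is not an eigenvalue of $K$.

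Now apply the Hilbert--Schmidt spectral theorem for compact self-adjoint operators: $L^{2}(\Omega)$ admits an orthonormal basis $\{\varphi_{k}\}_{k=1}^{\infty}$ of eigenfunctions of $K$, $K\varphi_{k}=\mu_{k}\varphi_{k}$, with real eigenvalues $\mu_{1}\geq \mu_{2}\geq\cdots>0$ (positivity from the previous step) and $\mu_{k}\to 0$; the eigenfunctions exhaust $L^{2}(\Omega)$ precisely because $0$ is not an eigenvalue. Setting $\lambda_{k}:=\mu_{k}^{-1}$, the identity $T\varphi_{k}=\mu_{k}\varphi_{k}$ in $H_{X,0}^{1}(\Omega)$ becomes, via the defining property of $T$, $a(\varphi_{k},v)=\lambda_{k}\int_{\Omega}\varphi_{k}v\,dx$ for all $v\in H_{X,0}^{1}(\Omega)$, which is exactly the weak formulation of \eqref{2-8}. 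Thus each $\varphi_{k}\in H_{X,0}^{1}(\Omega)$ is a Dirichlet eigenfunction with eigenvalue $\lambda_{k}$, and $0<\lambda_{1}\leq \lambda_{2}\leq\cdots$ with $\lambda_{k}\to +\infty$ (that $\lambda_{1}>0$ also follows directly from Proposition \ref{Poincare}). Finally, $H_{X,0}^{1}(\Omega)$-orthogonality is immediate from $a(\varphi_{k},\varphi_{l})=\lambda_{k}\langle \varphi_{k},\varphi_{l}\rangle_{L^{2}}=\lambda_{k}\delta_{kl}$, and completeness follows since $u\in H_{X,0}^{1}(\Omega)$ with $a(u,\varphi_{k})=0$ for all $k$ gives $\langle u,\varphi_{k}\rangle_{L^{2}}=\lambda_{k}^{-1}a(u,\varphi_{k})=0$ for all $k$, hence $u=0$ in $L^{2}(\Omega)$ and therefore in $H_{X,0}^{1}(\Omega)$; so $\{\varphi_{k}\}$ is also an orthogonal basis of $H_{X,0}^{1}(\Omega)$.

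The genuinely substantial input is the compactness of $H_{X,0}^{1}(\Omega)\hookrightarrow L^{2}(\Omega)$ (Corollary \ref{corollary2-2}, resting on the subelliptic estimate Proposition \ref{subelliptic-estimate}) together with the coercivity furnished by the Poincar\'e inequality; once these are available, the argument is the standard Hilbert-space spectral machinery. The only points needing a little care are verifying the norm equivalence so that $(H_{X,0}^{1}(\Omega),a)$ is complete, and checking that the $L^{2}$-orthonormal eigenbasis simultaneously serves as an orthogonal basis of $H_{X,0}^{1}(\Omega)$, both handled above.
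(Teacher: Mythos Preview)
Your proof is correct and follows exactly the approach the paper indicates: the paper does not give a detailed proof of Proposition~\ref{prop2-4} but merely states that it ``can be derived'' from Proposition~\ref{Poincare} (coercivity) and Proposition~\ref{subelliptic-estimate} (compactness of $H_{X,0}^{1}(\Omega)\hookrightarrow L^{2}(\Omega)$ via Corollary~\ref{corollary2-2}), and you have supplied precisely the standard spectral argument built on these two inputs.
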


In 2015, Chen and Luo \cite{chenluo} studied the subelliptic Dirichlet eigenvalue problem \eqref{2-8} and obtained the sharp lower bound $\lambda_{k}\geq C_{1}\cdot k^{\frac{2}{\tilde{\nu}}}$ for Grushin type vector fields. Then, their results have been generalized in \cite{CCD,chen-chen2019-jpsdo,chen-chen2019-ata,Chen-Zhou2017}. Recently in \cite{chen-chen2019}, the authors have proved that this sharp lower bound is also valid for general H\"{o}rmander type subelliptic operator $-\triangle_{X}$, namely
\begin{proposition}
\label{prop2-5}
Let $X$ and $\Omega$ satisfy the same assumptions as Theorem \ref{thm1-1}. Denoting by $\lambda_{k}$ the $k$-th Dirichlet eigenvalue of $-\triangle_{X}$ on $\Omega$, then
\begin{equation}\label{2-9}
\lambda_{k}\geq C_{1}\cdot k^{\frac{2}{\tilde{\nu}}},\qquad \forall ~k\geq 1,
\end{equation}
where $C_{1}>0$ is a constant depending on $X$ and $\Omega$, and $\tilde{\nu}$ is the generalized M\'{e}tivier index of $X$ on $\Omega$.
\end{proposition}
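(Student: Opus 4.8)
The plan is to deduce \eqref{2-9} from a matching upper bound for the eigenvalue counting function $N(\lambda):=\#\{k\ge 1:\lambda_k\le\lambda\}$; indeed \eqref{2-9} is equivalent to
\begin{equation*}
N(\lambda)\le C_2\,\lambda^{\tilde\nu/2}\qquad(\lambda>0),
\end{equation*}
because $k\le N(\lambda_k)$ gives $\lambda_k\ge(k/C_2)^{2/\tilde\nu}$ for all large $k$, while the finitely many remaining indices are handled by $\lambda_k\ge\lambda_1>0$ from Proposition \ref{Poincare} after enlarging the constant. So it suffices to bound $N(\lambda)$ from above.

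I would do this through the heat semigroup. By Proposition \ref{prop2-4} the operator $-\triangle_X$ has discrete spectrum, and by hypoellipticity together with the subelliptic estimate of Proposition \ref{subelliptic-estimate} and standard parabolic theory, the Dirichlet heat semigroup $e^{t\triangle_X}$ on $L^2(\Omega)$ is trace class with continuous kernel $p_t^{\Omega}(x,y)=\sum_{k\ge1}e^{-\lambda_kt}\varphi_k(x)\varphi_k(y)$, so
\begin{equation*}
\sum_{k\ge1}e^{-\lambda_k t}=\operatorname{Tr}e^{t\triangle_X}=\int_{\Omega}p_t^{\Omega}(x,x)\,dx,\qquad t>0.
\end{equation*}
The key input is the small-time on-diagonal estimate $p_t^{\Omega}(x,x)\le C\,|B_X(x,\sqrt t)|^{-1}$ for $0<t\le t_0$, where $B_X(x,r)$ is the Carnot--Carath\'eodory ball and $|\cdot|$ denotes Lebesgue measure: this is the classical Gaussian-type heat-kernel bound for subelliptic operators (by domain monotonicity it passes from a fixed domain $\Omega\subset\subset\Omega'\subset\subset W$ down to $\Omega$), and it is also a consequence of the general theory of Dirichlet spaces satisfying volume doubling and a Poincar\'e inequality, both of which hold here by the Nagel--Stein--Wainger volume estimate and Jerison's subelliptic Poincar\'e inequality. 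Combining this with the uniform volume lower bound $|B_X(x,r)|\ge c\,r^{\tilde\nu}$ for all $x\in\overline\Omega$ and $0<r\le r_0$ — which one reads off from the Nagel--Stein--Wainger formula together with Definition \ref{def2-2} of the generalized M\'etivier index — yields $p_t^{\Omega}(x,x)\le C\,t^{-\tilde\nu/2}$ and hence $\sum_{k}e^{-\lambda_kt}\le C|\Omega|\,t^{-\tilde\nu/2}$ for $0<t\le t_0$.

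The bound on $N(\lambda)$ then follows at once: for $\lambda_j\le\lambda$ one has $e^{-\lambda_j t}\ge e^{-\lambda t}$, so $N(\lambda)e^{-\lambda t}\le\sum_k e^{-\lambda_k t}\le C|\Omega|\,t^{-\tilde\nu/2}$, and choosing $t=1/\lambda$ (valid once $\lambda\ge 1/t_0$) gives $N(\lambda)\le eC|\Omega|\,\lambda^{\tilde\nu/2}$. A more elementary alternative to the heat-kernel step is Dirichlet--Neumann bracketing: partition a neighbourhood of $\Omega$ into Christ-type cells of Carnot--Carath\'eodory diameter $\approx\lambda^{-1/2}$, bound $N(\lambda)$ by the sum of the Neumann counting functions of the cells, use the spectral gap $\mu_2^N\gtrsim r^{-2}$ on each cell (a rescaling of Jerison's subelliptic Poincar\'e inequality) to see that each such count equals $1$, and then count the cells by $\#\{\text{cells}\}\lesssim\int_{\Omega}|B_X(x,\lambda^{-1/2})|^{-1}\,dx\lesssim|\Omega|\,\lambda^{\tilde\nu/2}$.

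The main obstacle in either route is the uniform volume lower bound $|B_X(x,r)|\ge c\,r^{\tilde\nu}$. Under M\'etivier's condition this is just the homogeneity of the locally approximating Carnot group; but in the general finitely degenerate case the pointwise homogeneous dimension $\nu(x)$ of Definition \ref{def2-2} jumps, and the relevant Nagel--Stein--Wainger coefficients degenerate precisely on that exceptional set, so one must check carefully that across all small scales the worst exponent occurring in $|B_X(x,r)|$ is exactly $\tilde\nu=\max_{x\in\overline\Omega}\nu(x)$ — this is where the generalized M\'etivier index is forced into the bound. The remaining ingredients (trace-class property, domain monotonicity, the subelliptic on-diagonal estimate, and the covering/bracketing machinery) are by now standard once the geometry of the Carnot--Carath\'eodory balls is under control.
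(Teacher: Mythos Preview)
The paper does not give a proof of this proposition at all; it simply cites \cite{chen-chen2019} (Theorem 1.2 there) and stops. So your proposal supplies genuine content where the paper supplies only a reference.

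Your heat-trace route is sound and is in fact one of the standard ways to obtain such lower bounds: the equivalence $\lambda_k\ge C_1 k^{2/\tilde\nu}\Longleftrightarrow N(\lambda)\le C_2\lambda^{\tilde\nu/2}$ is immediate, the on-diagonal Dirichlet heat-kernel bound $p_t^{\Omega}(x,x)\le C|B_X(x,\sqrt t)|^{-1}$ for small $t$ is available from Jerison--S\'anchez-Calle (or from the doubling/Poincar\'e machinery you mention), and domain monotonicity is the right way to transfer it to $\Omega$. You have also correctly isolated the one non-routine point: the \emph{uniform} volume lower bound $|B_X(x,r)|\ge c\,r^{\tilde\nu}$ on $\overline\Omega$. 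This does follow from Nagel--Stein--Wainger by a compactness argument --- at each $x_0$ there is an $n$-tuple $I_0$ of commutators with $d(I_0)=\nu(x_0)\le\tilde\nu$ and nonvanishing determinant, hence $\Lambda(x,r)\ge c_0 r^{\nu(x_0)}\ge c_0 r^{\tilde\nu}$ on a neighbourhood of $x_0$; cover $\overline\Omega$ by finitely many such neighbourhoods --- and it would be worth stating this explicitly rather than leaving it as a flagged obstacle. The Dirichlet--Neumann bracketing alternative you sketch is also viable but genuinely more delicate in the subelliptic setting (Neumann spectral gaps on Christ-type cells require the Neumann form of Jerison's inequality on those specific sets), so the heat-kernel argument is the cleaner of your two options.

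In short: your proposal is a correct and self-contained outline, which is more than the paper itself offers for this statement.
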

\begin{proof}
See Theorem 1.2  in \cite{chen-chen2019}.
\end{proof}
\begin{remark}
In \cite{chen-chen2019-jmpa}, the authors proved that, if $-\triangle_{X}$ is a H\"{o}rmander type subelliptic operator on compact manifold, then the eigenvalues also admit the lower bound \eqref{2-9}.
\end{remark}

Next, we present the following abstract version of the Rellich–Kondrachov compactness theorem, which will be used in developing the compact embedding result for weighted Sobolev space.
\begin{proposition}
\label{prop2-6}
Let $Y$ be a set equipped with a finite measure $\mu$. Assume that a linear normed space $G$ of measurable functions on $Y$ has the following two properties:
\begin{enumerate}
  \item [(1)] There exists a constant $q>1$ such that the embedding $G\hookrightarrow L^q(Y,\mu)$ is bounded;
  \item [(2)] Every bounded sequence in $G$ contains a subsequence that converges almost everywhere.
\end{enumerate}
 Then the embedding $G\hookrightarrow L^s(Y,\mu)$ is compact for every $1\leq s<q$.
\end{proposition}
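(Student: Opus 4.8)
The plan is to verify compactness in the sequential form: it suffices to show that every bounded sequence $\{u_n\}_{n=1}^{\infty}$ in $G$ possesses a subsequence converging in the $L^s(Y,\mu)$-norm (the limit then automatically lies in the complete space $L^s(Y,\mu)$). By hypothesis (1) such a sequence is bounded in $L^q(Y,\mu)$, say $\|u_n\|_{L^q(Y,\mu)}\le C_0$ for all $n$; by hypothesis (2) we may pass to a subsequence $\{u_{n_k}\}_{k=1}^{\infty}$ converging $\mu$-almost everywhere to a measurable function $u$. Applying Fatou's lemma to $|u_{n_k}|^q$ gives $\|u\|_{L^q(Y,\mu)}\le C_0$; since $\mu(Y)<\infty$ this already places $u$ in $L^s(Y,\mu)$, and the functions $w_k:=u_{n_k}-u$ satisfy both $\|w_k\|_{L^q(Y,\mu)}\le 2C_0=:M$ and $w_k\to 0$ $\mu$-a.e.

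Next I would run a truncation argument exploiting $s<q$. For an arbitrary $R>0$ split
\[
\int_Y|w_k|^s\,d\mu=\int_{\{|w_k|>R\}}|w_k|^s\,d\mu+\int_{\{|w_k|\le R\}}|w_k|^s\,d\mu .
\]
On the set $\{|w_k|>R\}$ we have $|w_k|^s=|w_k|^q|w_k|^{s-q}\le R^{s-q}|w_k|^q$ because $s-q<0$, so the first integral is at most $R^{s-q}M^q$, a bound uniform in $k$. In the second integral the integrand is dominated by the constant $R^s$, which is $\mu$-integrable because $\mu(Y)<\infty$, and it tends to $0$ $\mu$-a.e. as $k\to\infty$; hence the dominated convergence theorem forces the second integral to $0$. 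Therefore $\limsup_{k\to\infty}\int_Y|w_k|^s\,d\mu\le R^{s-q}M^q$ for every $R>0$, and letting $R\to\infty$ yields $\int_Y|w_k|^s\,d\mu\to0$, i.e. $u_{n_k}\to u$ in $L^s(Y,\mu)$. This establishes compactness of $G\hookrightarrow L^s(Y,\mu)$ for every $1\le s<q$.

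The argument is elementary once the pieces are in place; the only point needing genuine care is the joint use of the two hypotheses together with the finiteness of $\mu$. Hypothesis (1) must be invoked not merely to bound the sequence in $L^q$ but, through Fatou, to certify that the a.e.\ limit $u$ again belongs to $L^q$ (hence to $L^s$), so that the difference $w_k$ is itself controlled in $L^q$; and the finite total mass is exactly what both legitimizes the splitting (via $L^q(Y,\mu)\subset L^s(Y,\mu)$) and furnishes the integrable dominating function $R^s$ on the truncated part. An equivalent and equally short route is to apply the Vitali convergence theorem directly: Hölder's inequality gives $\int_E|w_k|^s\,d\mu\le M^{s}\mu(E)^{1-s/q}$ for every measurable $E$, which shows the family $\{|w_k|^s\}_k$ is uniformly integrable, and combining this with $w_k\to0$ a.e.\ and $\mu(Y)<\infty$ yields $L^s$-convergence; I would keep whichever formulation reads more cleanly alongside the later applications.
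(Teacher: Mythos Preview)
Your argument is correct. The truncation-plus-dominated-convergence route (and the Vitali alternative you sketch) is the standard, fully rigorous way to prove this lemma; the only small care point is that the integrand $|w_k|^s\,\mathbf{1}_{\{|w_k|\le R\}}$ genuinely tends to $0$ $\mu$-a.e.\ because $w_k\to 0$ a.e.\ forces the indicator to equal $1$ eventually at almost every point, and you handle this correctly.

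As for comparison with the paper: there is nothing to compare, since the paper does not prove this proposition at all but simply cites Theorem~4 of Haj{\l}asz--Koskela (\cite{Hajlasz1998}). Your write-up therefore supplies a self-contained proof where the paper outsources it. If you look up the cited result you will find essentially the same Vitali/uniform-integrability mechanism at work, so your approach is not merely correct but is in fact the expected one.
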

\begin{proof}
See Theorem 4 in \cite{Hajlasz1998}.
\end{proof}

By Proposition \ref{prop2-6}, we establish the following  degenerate
compact embedding theorem of $H_{X,0}^{1}(\Omega)$.
\begin{proposition}[Degenerate Rellich–Kondrachov
Compact Embedding Theorem]
\label{prop2-7}
Let $X$ and $\Omega$ satisfy the same assumptions as Theorem \ref{thm1-1}. Then the embedding
\[ H_{X,0}^1(\Omega)\hookrightarrow L^s(\Omega) \]
is compact for every $s\in [1,2_{\tilde{\nu}}^*)$, where $2_{\tilde{\nu}}^*=\frac{2\tilde{\nu}}{\tilde{\nu}-2}$ and $\tilde{\nu}\geq 3$ is the generalized M\'{e}tivier index defined in \eqref{2-3}.
\end{proposition}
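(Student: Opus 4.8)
The plan is to deduce Proposition \ref{prop2-7} directly from the abstract Rellich–Kondrachov criterion stated in Proposition \ref{prop2-6}, using Corollary \ref{corollary2-1} and Corollary \ref{corollary2-2} to verify its two hypotheses. Concretely, I would take $Y=\Omega$ with $\mu$ the Lebesgue measure, which is finite because $\Omega$ is bounded, and set $G=H_{X,0}^{1}(\Omega)$ with its Hilbert-space norm.

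First I would check hypothesis (1) of Proposition \ref{prop2-6}. Since $\tilde{\nu}\geq 3$, the critical exponent $2_{\tilde{\nu}}^{*}=\frac{2\tilde{\nu}}{\tilde{\nu}-2}$ satisfies $2<2_{\tilde{\nu}}^{*}\leq 6$, so in particular $q:=2_{\tilde{\nu}}^{*}>1$; and by Corollary \ref{corollary2-1} the embedding $H_{X,0}^{1}(\Omega)\hookrightarrow L^{q}(\Omega)$ is bounded. This is exactly hypothesis (1).

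Next I would verify hypothesis (2): every bounded sequence $\{u_k\}_{k=1}^{\infty}$ in $H_{X,0}^{1}(\Omega)$ has a subsequence converging almost everywhere on $\Omega$. By Corollary \ref{corollary2-2} the embedding $H_{X,0}^{1}(\Omega)\hookrightarrow L^{2}(\Omega)$ is compact, so $\{u_k\}$ admits a subsequence $\{u_{k_j}\}$ converging strongly in $L^{2}(\Omega)$; by the standard fact that an $L^{2}$-convergent sequence has a further subsequence converging pointwise almost everywhere, we obtain the required a.e.-convergent subsequence. Hence hypothesis (2) holds.

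With both hypotheses in place, Proposition \ref{prop2-6} yields that $H_{X,0}^{1}(\Omega)\hookrightarrow L^{s}(\Omega)$ is compact for every $1\leq s<q=2_{\tilde{\nu}}^{*}$, which is precisely the assertion. There is no genuine analytic obstacle at this stage: the substantive content is already packaged in Corollary \ref{corollary2-1} (Yung's weighted Sobolev embedding) and Corollary \ref{corollary2-2} (the Rothschild–Stein subelliptic estimate combined with the classical compact embedding $H^{\epsilon_{0}}(\Omega)\hookrightarrow L^{2}(\Omega)$). The only points requiring a little care are confirming that the finite-measure framework of Proposition \ref{prop2-6} indeed applies to $(\Omega,\mu)$ and that strong $L^{2}$-convergence suffices to extract an a.e.-convergent subsequence, both of which are routine.
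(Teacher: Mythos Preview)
Your proposal is correct and follows essentially the same route as the paper: apply Proposition \ref{prop2-6} with $q=2_{\tilde{\nu}}^{*}$, using Corollary \ref{corollary2-1} for hypothesis (1) and Corollary \ref{corollary2-2} plus extraction of an a.e.-convergent subsequence for hypothesis (2). The only cosmetic difference is that the paper first passes to a weakly convergent subsequence before invoking the compact embedding into $L^{2}(\Omega)$, but this extra step is not needed and your direct use of compactness is equally valid.
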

\begin{proof}
From \eqref{2-5}, we know the embedding $H_{X,0}^1(\Omega)\hookrightarrow L^{2_{\tilde{\nu}}^*}(\Omega)$ is bounded. For any bounded sequence $\{u_k\}_{k=1}^{\infty}$ in $H_{X,0}^1(\Omega)$, there exists a subsequence $\{u_{k_{j}}\}_{j=1}^{\infty}\subset\{u_k\}_{k=1}^{\infty}$ such that $u_{k_{j}}\rightharpoonup u$ weakly in $H_{X,0}^1(\Omega)$. By Corollary \ref{corollary2-2},  $H_{X,0}^1(\Omega)\hookrightarrow L^2(\Omega)$ is compact, which gives $u_{k_{j}}\to u$ in $L^2(\Omega)$. Therefore, the Riesz theorem allows us to find a subsequence $\{v_{j}\}_{j=1}^{\infty}\subset  \{u_{k_{j}}\}_{j=1}^{\infty}$  such that $v_{j}\to u$ almost everywhere on $\Omega$ as $j\to +\infty$. Hence, we conclude from Proposition \ref{prop2-6} that the embedding $H_{X,0}^1(\Omega)\hookrightarrow L^s(\Omega)$ is compact for $s\in [1,2_{\tilde{\nu}}^*)$.
 \end{proof}

\subsection{Minimax method in critical point theory}
In this part, we introduce some propositions in minimax method.
\begin{proposition}
\label{prop2-8}
Let $\Omega\subset \mathbb{R}^{n}$ be a bounded domain and suppose that $g$ satisfies
\begin{enumerate}
  \item [(1)] $g\in C(\overline{\Omega}\times \mathbb{R})$.
  \item [(2)] There are constants $C>0$ and $s\geq1$ such that
  \[|g(x,u)|\leq C(1+{|u|}^s)\]
  for all $x\in \overline{\Omega}, u\in \mathbb{R}$.
\end{enumerate}
 Then the map $u(x)\mapsto g(x,u(x))$ belongs to $ C(L^{sp}(\Omega), L^{p}(\Omega))$ for $1\leq p<\infty$.
\end{proposition}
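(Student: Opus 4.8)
\textbf{Proof proposal for Proposition \ref{prop2-8}.}

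The plan is to establish continuity of the Nemytskii (superposition) operator $N_g: u \mapsto g(\cdot, u(\cdot))$ from $L^{sp}(\Omega)$ into $L^p(\Omega)$ by the standard argument of Krasnoselskii, broken into three stages: first checking that $N_g$ maps $L^{sp}(\Omega)$ into $L^p(\Omega)$ at all, then reducing continuity to a convergence-of-subsequences statement, and finally invoking a dominated convergence argument on an a.e.-convergent subsequence.

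First I would verify well-definedness. Given $u \in L^{sp}(\Omega)$, the growth condition (2) yields the pointwise bound $|g(x,u(x))| \le C(1 + |u(x)|^s)$, hence $|g(x,u(x))|^p \le C^p 2^{p-1}(1 + |u(x)|^{sp})$, which is integrable over $\Omega$ since $u \in L^{sp}(\Omega)$ and $\Omega$ has finite Lebesgue measure. Thus $g(\cdot,u(\cdot)) \in L^p(\Omega)$. Measurability of $x \mapsto g(x,u(x))$ follows from continuity of $g$ in its second argument together with measurability of $u$ (approximate $u$ by simple functions and use that $g$ is Carath\'eodory).

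Next I would prove continuity by contradiction via the subsequence principle. Suppose $u_k \to u$ in $L^{sp}(\Omega)$ but $N_g(u_k) \not\to N_g(u)$ in $L^p(\Omega)$; then there is $\varepsilon_0 > 0$ and a subsequence with $\|N_g(u_{k_j}) - N_g(u)\|_{L^p(\Omega)} \ge \varepsilon_0$ for all $j$. Since $u_{k_j} \to u$ in $L^{sp}(\Omega)$, passing to a further subsequence (not relabeled) we may assume $u_{k_j} \to u$ a.e. on $\Omega$ and, moreover, that there exists a dominating function $h \in L^{sp}(\Omega)$ with $|u_{k_j}(x)| \le h(x)$ a.e.; this last fact is the classical consequence of $L^{sp}$-convergence (choose the subsequence so that $\sum_j \|u_{k_{j+1}} - u_{k_j}\|_{L^{sp}} < \infty$ and take $h = |u_{k_1}| + \sum_j |u_{k_{j+1}} - u_{k_j}|$). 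By continuity of $g$ in the second variable, $g(x, u_{k_j}(x)) \to g(x, u(x))$ a.e., so $|g(x,u_{k_j}(x)) - g(x,u(x))|^p \to 0$ a.e. The growth bound gives $|g(x,u_{k_j}(x)) - g(x,u(x))|^p \le 2^{p-1}\big(|g(x,u_{k_j}(x))|^p + |g(x,u(x))|^p\big) \le C' \big(1 + h(x)^{sp} + |u(x)|^{sp}\big) \in L^1(\Omega)$, so the dominated convergence theorem forces $\|N_g(u_{k_j}) - N_g(u)\|_{L^p(\Omega)} \to 0$, contradicting the choice of the subsequence. Hence $N_g$ is continuous.

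The main obstacle is the extraction of the $L^{sp}$-dominating function $h$ along a subsequence; this is the only nontrivial input beyond measure-theoretic bookkeeping, and it is exactly where the finiteness of $\mu$ and the use of a rapidly converging subsequence enter. Everything else (measurability, the $L^p$ bound, the final dominated convergence step) is routine once the growth hypothesis (2) and continuity of $g$ are in hand. I would remark that this is a purely measure-theoretic statement about Nemytskii operators and is classical; a reference such as the monograph of Krasnoselskii or the textbook treatment in Struwe could be cited in place of reproducing the argument.
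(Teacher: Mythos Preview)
Your argument is correct and is precisely the classical Krasnoselskii proof of continuity of the Nemytskii operator: well-definedness from the growth bound, then the subsequence principle combined with extraction of an a.e.-convergent, $L^{sp}$-dominated subsequence, followed by dominated convergence. There is no gap.

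As for comparison with the paper: the paper does not give its own proof of this proposition at all; it simply refers the reader to Proposition~B.1 in Rabinowitz's CBMS monograph \cite{Rabinowitz1986}. Your write-up is essentially the argument one finds there (or in Krasnoselskii, or in Struwe), so in spirit it matches what the paper relies on, only spelled out in full. Your closing remark that one could cite a standard reference in lieu of reproducing the proof is exactly what the paper does.
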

\begin{proof}
See Proposition B.1 in \cite{Rabinowitz1986}.

\end{proof}

\begin{proposition}
\label{prop2-9}
Let $X=(X_1,X_{2},\cdots,X_m)$ and $\Omega$ satisfy the same conditions as  Theorem $\ref{thm1-1}$. Suppose
$ f\in C(\overline{\Omega}\times \mathbb{R})$  and $F(x,u)=\int_{0}^{u}f(x,v)dv$. Moreover, there exists a constant $C>0$ such that the following growth conditions are satisfied:

\begin{enumerate}
  \item [(1)] There exists $1\leq s_1\leq \frac{2\tilde{\nu}}{\tilde{\nu}-2}$ such that
\[|F(x,u)|\leq C(1+{|u|}^{s_1})~~~\mbox{for any}~~x\in \overline{\Omega}.\]
  \item [(2)] There exists $1\leq s_2\leq \frac{\tilde{\nu}+2}{\tilde{\nu}-2}$ such that
\[ |f(x,u)|\leq C(1+{|u|}^{s_2})~~~\mbox{for any}~~x\in \overline{\Omega}. \]
\end{enumerate}
 Then $J(u)=\int_{\Omega}F(x,u(x))dx$ defines a $C^1$-functional on $H_{X,0}^1(\Omega)$, that is to say, the Fr\'{e}chet derivative of $J$ exists and is continuous on $H_{X,0}^1(\Omega)$. Furthermore, the Fr\'{e}chet derivative (denoted by $DJ$) is given by
 \[\langle DJ(u),v\rangle=\int_{\Omega}f(x,u)vdx,~~\forall~ v\in H_{X,0}^1(\Omega),\]
   where $\langle\cdot , \cdot\rangle$ denotes the pairing between $H_{X}^{-1}(\Omega)$ and $H_{X,0}^1(\Omega)$.
  \end{proposition}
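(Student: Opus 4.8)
The plan is to argue in three stages: (i) show $J$ is well defined and finite on $H_{X,0}^{1}(\Omega)$; (ii) compute its G\^{a}teaux derivative and identify it with the stated formula; (iii) upgrade to $C^{1}$ by proving that $u\mapsto DJ(u)$ is continuous from $H_{X,0}^{1}(\Omega)$ into $H_{X}^{-1}(\Omega)$. The only tools needed are the bounded embedding $H_{X,0}^{1}(\Omega)\hookrightarrow L^{q}(\Omega)$ for $1\le q\le 2_{\tilde{\nu}}^{*}$ from Corollary \ref{corollary2-1}, and the continuity of the Nemytskii operator from Proposition \ref{prop2-8}. The arithmetic fact that makes the exponents fit together is
\[
\frac{\tilde{\nu}+2}{\tilde{\nu}-2}+1=\frac{2\tilde{\nu}}{\tilde{\nu}-2}=2_{\tilde{\nu}}^{*},
\]
so that the growth exponent $s_{2}$ in assumption (2) satisfies $s_{2}+1\le 2_{\tilde{\nu}}^{*}$.

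For (i), if $u\in H_{X,0}^{1}(\Omega)$ then $u\in L^{s_{1}}(\Omega)$ since $s_{1}\le 2_{\tilde{\nu}}^{*}$, so assumption (1) gives $F(\cdot,u)\in L^{1}(\Omega)$ and $|J(u)|\le C\bigl(|\Omega|+\|u\|_{L^{s_{1}}(\Omega)}^{s_{1}}\bigr)<\infty$. For (ii), fix $u,v\in H_{X,0}^{1}(\Omega)$ and use the fundamental theorem of calculus to write, for $0<|t|\le 1$,
\[
\frac{F(x,u+tv)-F(x,u)}{t}=\int_{0}^{1}f(x,u+stv)\,v\,ds .
\]
By assumption (2), for $|t|\le 1$ the integrand is dominated by $C\bigl(1+|u|^{s_{2}}+|v|^{s_{2}}\bigr)|v|$, and by H\"older's inequality with conjugate exponents $\tfrac{s_{2}+1}{s_{2}}$ and $s_{2}+1$ this dominating function lies in $L^{1}(\Omega)$, since $u,v\in L^{s_{2}+1}(\Omega)$ by Corollary \ref{corollary2-1} and $s_{2}+1\le 2_{\tilde{\nu}}^{*}$. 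Hence the dominated convergence theorem, together with the continuity of $f$ in its second argument, lets us pass $t\to0$ inside the integral and yields $\langle DJ(u),v\rangle=\int_{\Omega}f(x,u)v\,dx$; the same H\"older bound shows this functional is bounded on $H_{X,0}^{1}(\Omega)$, i.e. $DJ(u)\in H_{X}^{-1}(\Omega)$.

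For (iii), let $u_{k}\to u$ in $H_{X,0}^{1}(\Omega)$. Since $s_{2}+1\le 2_{\tilde{\nu}}^{*}$, Corollary \ref{corollary2-1} gives $u_{k}\to u$ in $L^{s_{2}+1}(\Omega)$. Applying Proposition \ref{prop2-8} with $s=s_{2}$ and $p=\tfrac{s_{2}+1}{s_{2}}$ (so that $sp=s_{2}+1$), the map $w\mapsto f(\cdot,w)$ is continuous from $L^{s_{2}+1}(\Omega)$ into $L^{(s_{2}+1)/s_{2}}(\Omega)$, so $f(\cdot,u_{k})\to f(\cdot,u)$ in $L^{(s_{2}+1)/s_{2}}(\Omega)$. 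Then for every $v\in H_{X,0}^{1}(\Omega)$, H\"older's inequality and Corollary \ref{corollary2-1} give
\[
\bigl|\langle DJ(u_{k})-DJ(u),v\rangle\bigr|\le\|f(\cdot,u_{k})-f(\cdot,u)\|_{L^{(s_{2}+1)/s_{2}}(\Omega)}\,\|v\|_{L^{s_{2}+1}(\Omega)}\le C\,\|f(\cdot,u_{k})-f(\cdot,u)\|_{L^{(s_{2}+1)/s_{2}}(\Omega)}\,\|v\|_{H_{X,0}^{1}(\Omega)} .
\]
Taking the supremum over $\|v\|_{H_{X,0}^{1}(\Omega)}\le 1$ shows $\|DJ(u_{k})-DJ(u)\|_{H_{X}^{-1}(\Omega)}\to0$. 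Since $J$ is G\^{a}teaux differentiable with continuous derivative, it is Fr\'{e}chet differentiable and of class $C^{1}$, which completes the argument.

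The only delicate point is the bookkeeping of Lebesgue exponents in (ii) and (iii): one has to verify that the dominating function $\bigl(1+|u|^{s_{2}}+|v|^{s_{2}}\bigr)|v|$ genuinely belongs to $L^{1}(\Omega)$, and this rests precisely on the relation $s_{2}+1\le 2_{\tilde{\nu}}^{*}$ forced by $s_{2}\le\frac{\tilde{\nu}+2}{\tilde{\nu}-2}$, which places $|u|^{s_{2}}$ and $|v|$ in conjugate Lebesgue spaces that the weighted Sobolev embedding of Corollary \ref{corollary2-1} can reach. Once this is secured, the dominated convergence argument and the Nemytskii continuity of Proposition \ref{prop2-8} run exactly as in the classical non-degenerate case.
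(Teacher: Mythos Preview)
Your argument is correct and follows essentially the same route as the paper's own proof: well-definedness via the embedding of Corollary~\ref{corollary2-1}, identification of the G\^{a}teaux derivative, and continuity of $u\mapsto DJ(u)$ via the Nemytskii operator (Proposition~\ref{prop2-8}), concluding $C^{1}$ from the standard fact that a continuous G\^{a}teaux derivative yields Fr\'{e}chet differentiability. The only cosmetic difference is that the paper first enlarges the growth exponent to the critical one, $|f(x,u)|\le \tilde C\bigl(1+|u|^{(\tilde\nu+2)/(\tilde\nu-2)}\bigr)$, and then uses the fixed H\"older pair $\bigl(\tfrac{2\tilde\nu}{\tilde\nu+2},\tfrac{2\tilde\nu}{\tilde\nu-2}\bigr)$, whereas you keep $s_2$ throughout and pair $L^{(s_2+1)/s_2}$ with $L^{s_2+1}$; your step~(ii) also spells out the dominated convergence argument that the paper leaves implicit.
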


 \begin{proof}
First, from the first growth condition $|F(x,u)|\leq C(1+{|u|}^{s_1})$ with $1\leq s_1\leq \frac{2\tilde{\nu}}{\tilde{\nu}-2}$ and \eqref{2-5}, we
can deduce that the functional $J(u)=\int_{\Omega}F(x,u(x))dx$ on $H_{X,0}^1(\Omega)$ is well-defined.

The second growth condition indicates
\begin{equation}\label{2-10}
|f(x,u)| \leq C(1+{|u|}^{s_2})\leq \tilde{C}(1+{|u|}^{\frac{\tilde{\nu}+2}{\tilde{\nu}-2}}).
 \end{equation}
Combining \eqref{2-10} and \eqref{2-5}, we obtain that,  for any $u,v\in H_{X,0}^{1}(\Omega)$,
\begin{equation}\label{2-11}
\begin{aligned}
\left|\int_{\Omega}f(x,u)vdx \right|&\leq \tilde{C}\int_{\Omega}\left(|v|+|u|^{\frac{\tilde{\nu}+2}{\tilde{\nu}-2}}|v|\right)dx\\
&\leq C_{1}\|v\|_{H_{X,0}^{1}(\Omega)}+\tilde{C}\int_{\Omega}|u|^{\frac{\tilde{\nu}+2}{\tilde{\nu}-2}}|v|dx\\
&\leq C_{1}\|v\|_{H_{X,0}^{1}(\Omega)}+\tilde{C}\left(\int_{\Omega}\left( |u|^{\frac{\tilde{\nu}+2}{\tilde{\nu}-2}} \right)^{\frac{2\tilde{\nu}}{\tilde{\nu}+2}} dx\right)^{\frac{\tilde{\nu}+2}{2\tilde{\nu}}}\cdot \left(\int_{\Omega
}|v|^{\frac{2\tilde{\nu}}{\tilde{\nu}-2}}dx\right)^{\frac{\tilde{\nu}-2}{2\tilde{\nu}}}\\
&= C_{1}\|v\|_{H_{X,0}^{1}(\Omega)}+\tilde{C}\left(\int_{\Omega}|u|^{\frac{2\tilde{\nu}}{\tilde{\nu}-2}}dx  \right)^{\frac{\tilde{\nu}+2}{2\tilde{\nu}}}\cdot \|v\|_{L^{\frac{2\tilde{\nu}}{\tilde{\nu}-2}}(\Omega)}\\
&\leq \left(C_{1}+C_{2}\|u\|_{H_{X,0}^{1}(\Omega)}^{\frac{\tilde{\nu}+2}{\tilde{\nu}-2}}\right)\|v\|_{H_{X,0}^{1}(\Omega)},
\end{aligned}
\end{equation}
where $C_{1}, C_{2}$ are positive constants. Therefore, \eqref{2-11} implies the G\^{a}teaux derivative of the functional $J$ at $u\in H_{X,0}^{1}(\Omega)$ given by
 \begin{equation}\label{2-12}
  \langle J'(u),v\rangle=\int_{\Omega}f(x,u)vdx\quad \forall v\in H_{X,0}^1(\Omega)
 \end{equation}
is well-defined and belongs to $H_{X}^{-1}(\Omega)$.

On the other hand, for $u,u_0 \in H_{X,0}^1(\Omega)$, we have
\begin{equation}\label{2-13}
 \begin{aligned}
 &\|J'(u)-J'(u_0)\|_{{H_X^{-1}}(\Omega)}\\
&=\sup\limits_{
 {v\in H_{X,0}^1(\Omega)},{\|v\|_{{H_{X,0}^1}(\Omega)}\leq1}}|\langle J'(u)-J'(u_0),v\rangle|   \\
 &\leq\sup\limits_{{v\in H_{X,0}^1(\Omega)},
{\|v\|_{{H_{X,0}^1}(\Omega)}\leq1}}\int_{\Omega}|f(x,u)-f(x,u_0)||v|dx\\
 &\leq\sup\limits_{
 {v\in H_{X,0}^1(\Omega)}
,{\|v\|_{{H_{X,0}^1}(\Omega)}\leq1}}\left(\int_{\Omega}|f(x,u)-f(x,u_0)|^{\frac{2\tilde{\nu}}{\tilde{\nu}+2}}dx
\right)^{\frac{\tilde{\nu}+2}{2\tilde{\nu}}}\|v\|_{L^{\frac{2\tilde{\nu}}{\tilde{\nu}-2}}(\Omega)}\\
 &\leq C\left(\int_{\Omega}|f(x,u)-f(x,u_0)|^{\frac{2\tilde{\nu}}{\tilde{\nu}+2}}dx\right)^{\frac{\tilde{\nu}+2}{2\tilde{\nu}}}.
 \end{aligned}
 \end{equation}
If
$u\rightarrow u_0$ in $H_{X,0}^{1}(\Omega)$, by \eqref{2-5}  we have $u\rightarrow u_0 $ in $L^{\frac{2\tilde{\nu}}{\tilde{\nu}-2}}(\Omega)$. Take $s=\frac{\tilde{\nu}+2}{\tilde{\nu}-2},~p=\frac{2\tilde{\nu}}{\tilde{\nu}+2}$ in Proposition \ref{prop2-8}. Then, the Proposition \ref{prop2-8} and \eqref{2-10} tell us $u\mapsto f(\cdot,u(\cdot))$ is continuous from $L^{{\frac{2\tilde{\nu}}{\tilde{\nu}-2}}}(\Omega)$ into $L^{{\frac{2\tilde{\nu}}{\tilde{\nu}+2}}}(\Omega)$. Hence, it follows from \eqref{2-13} that $J'(u)\rightarrow J'(u_0)~\mbox{in}~H_X^{-1}(\Omega)$. This means $J$ has continuous G\^{a}teaux derivative on $H_{X,0}^{1}(\Omega)$. Furthermore, by Proposition 1.3 in \cite{Willem1997}, we obtain $J\in C^{1}(H_{X,0}^{1}(\Omega),\mathbb{R})$ and the Fr\'{e}chet derivative $DJ(u)=J'(u)$ for all $u\in H_{X,0}^{1}(\Omega)$.

 \end{proof}

Suppose $V$ is a real Banach space and $V^{*}$ is the dual space of $V$. Then, we state some basic definitions and propositions in minimax method.
 \begin{definition}
 \label{def2-3}
For $E\in C^{1}(V, \mathbb{R})$, we say a sequence $\{u_m\}$ in $V$ is a Palais-Smale (PS) sequence for functional $E$ if ${|E(u_m)|}\leq C$ uniformly in m, and ${\|DE(u_m)\|}_{V^*}\rightarrow0$ as $m\rightarrow\infty$.
 \end{definition}

  \begin{definition}
  \label{def2-4}
  A functional $E\in C^{1}(V, \mathbb{R})$ satisfies Palais-Smale condition (henceforth denoted by (PS) condition) if any Palais-Smale sequence has a subsequence which is convergent in $V$.
  \end{definition}

 In fact, it is easier to verify (PS) condition for functional $E$ by following proposition.

\begin{proposition}
\label{prop2-10}

 Suppose that $V$ is a reflexive real Banach space, and the functional $E\in C^{1}(V, \mathbb{R})$ admits the following properties:
\begin{enumerate}
  \item [(1)] Any (PS) sequence for $E$ is bounded in $V$.
  \item [(2)] For any $u\in V$ we can decompose
 \[DE(u)=L(u)+K(u),\]
 where $L:V\rightarrow V^*$ is a fixed bounded invertible linear map and the operator $K$ maps bounded sets in $V$ to relatively compact sets in $V^*$.
\end{enumerate}
Then $E$ satisfies (PS) condition.
\end{proposition}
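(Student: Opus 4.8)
The plan is to take an arbitrary Palais--Smale sequence $\{u_m\}$ for $E$ and extract from it a strongly convergent subsequence in $V$, which is exactly what the (PS) condition demands. First I would invoke hypothesis (1): since $\{u_m\}$ is a (PS) sequence, it is bounded in $V$.

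Next I would exploit hypothesis (2). Because $\{u_m\}$ is bounded, the set $\{K(u_m):m\geq 1\}$ lies in a relatively compact subset of $V^{*}$; hence, after passing to a subsequence (not relabelled), we may assume $K(u_m)\to w$ strongly in $V^{*}$ for some $w\in V^{*}$. On the other hand, the definition of a (PS) sequence gives $DE(u_m)\to 0$ in $V^{*}$. Writing $L(u_m)=DE(u_m)-K(u_m)$ and letting $m\to\infty$, we obtain $L(u_m)\to -w$ strongly in $V^{*}$.

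Finally, since $L:V\to V^{*}$ is a bounded invertible linear map between Banach spaces, the bounded inverse (open mapping) theorem guarantees that $L^{-1}:V^{*}\to V$ is bounded, hence continuous. Therefore $u_m=L^{-1}(L(u_m))\to L^{-1}(-w)$ strongly in $V$, so $\{u_m\}$ has a strongly convergent subsequence and $E$ satisfies the (PS) condition. If one wishes, the reflexivity of $V$ can be used beforehand to extract a weakly convergent subsequence $u_m\rightharpoonup u$ and to identify the strong limit as $u=L^{-1}(-w)$, but this identification is not needed for the conclusion.

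The argument is essentially routine; the only points requiring care are (i) interpreting ``relatively compact'' in the norm topology of $V^{*}$, so that a genuinely strongly convergent subsequence of $\{K(u_m)\}$ is available, and (ii) justifying the continuity of $L^{-1}$ through the open mapping theorem, which is where the completeness of $V$ and $V^{*}$ is used. I do not expect any substantial obstacle beyond this bookkeeping.
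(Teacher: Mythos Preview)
Your argument is correct and is precisely the standard proof of this fact. The paper itself does not give an independent proof but simply refers to Chapter~2, Proposition~2.2 of Struwe's book, where exactly the argument you outline appears; so your proposal matches the intended proof.
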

\begin{proof}
See Chapter 2, Proposition 2.2 in \cite{Struwe2000}.
\end{proof}

\begin{proposition}[Mountain Pass Theorem]
\label{prop2-11}
 Assume that $E\in C^1(V, \mathbb{R})$ satisfies (PS) condition and
\begin{enumerate}
  \item [(1)] $E(0)=0$,
  \item [(2)]$\exists\rho>0,\alpha>0$ such that for ${\|u\|_{V}}=\rho$,  $ E(u)\geq\alpha$,
  \item [(3)] $\exists u_1\in V$ such that ${\|u_1\|_{V}}>\rho$ and $E(u_1)\leq 0$.
\end{enumerate}
Then $E$ possesses a critical value $c\geq \alpha$, which can be characterized as
 \[c=\inf\limits_{l\in P}\sup\limits_{u\in l([0,1])}E(u)\]
where
 \[P=\{l\in C([0,1],V)|~ l(0)=0,l(1)=u_1\}.\]

\end{proposition}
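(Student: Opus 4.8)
The plan is to argue by contradiction using the quantitative deformation lemma, the one place where the (PS) condition is used decisively.

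First I would establish the lower bound $c\geq\alpha$. Fix any $l\in P$, so that $l(0)=0$ and $l(1)=u_1$ with $\|u_1\|_V>\rho$. The map $t\mapsto\|l(t)\|_V$ is continuous, vanishes at $t=0$, and exceeds $\rho$ at $t=1$, so by the intermediate value theorem there is $t_0\in(0,1)$ with $\|l(t_0)\|_V=\rho$; hypothesis (2) then gives $E(l(t_0))\geq\alpha$. Hence $\sup_{u\in l([0,1])}E(u)\geq\alpha$ for every $l\in P$, and therefore $c\geq\alpha>0$. In particular $E(0)=0<c$ and $E(u_1)\leq0<c$, so both endpoints of every admissible path sit strictly below the level $c$; this fact is what will make the endpoints survive the deformation.

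Next, suppose for contradiction that $c$ is not a critical value of $E$. Since $E\in C^1(V,\mathbb{R})$ satisfies the (PS) condition, the deformation lemma applies: there exist $\varepsilon\in(0,c/2)$ and a continuous map $\eta:[0,1]\times V\to V$ with $\eta(0,\cdot)=\mathrm{id}$, with $\eta(t,u)=u$ whenever $E(u)\notin[c-2\varepsilon,c+2\varepsilon]$, and with $\eta(1,E^{c+\varepsilon})\subset E^{c-\varepsilon}$, where $E^{a}:=\{u\in V:E(u)\leq a\}$. By definition of $c$ as an infimum, pick $l\in P$ with $\sup_{u\in l([0,1])}E(u)\leq c+\varepsilon$, i.e. $l([0,1])\subset E^{c+\varepsilon}$, and set $\tilde l(t):=\eta(1,l(t))$. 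Because $\varepsilon<c/2$ forces $E(0)=0<c-2\varepsilon$ and $E(u_1)\leq0<c-2\varepsilon$, the two endpoints lie outside the strip $[c-2\varepsilon,c+2\varepsilon]$, so $\tilde l(0)=\eta(1,0)=0$ and $\tilde l(1)=\eta(1,u_1)=u_1$, whence $\tilde l\in P$. On the other hand $\tilde l([0,1])=\eta(1,l([0,1]))\subset\eta(1,E^{c+\varepsilon})\subset E^{c-\varepsilon}$, so $\sup_{u\in\tilde l([0,1])}E(u)\leq c-\varepsilon<c$, contradicting the definition of $c$. Thus $c$ is a critical value, and since $c\geq\alpha$, the theorem follows.

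The genuine obstacle — and the only substantial step — is the deformation lemma itself: one must build the (locally Lipschitz) flow of a pseudo-gradient vector field for $E$ and show that, if the strip $E^{-1}([c-2\varepsilon,c+2\varepsilon])$ contains no critical point, then the (PS) condition forces a uniform bound $\|DE(u)\|_{V^*}\geq\delta>0$ on that strip, which is exactly what lets the flow push $E^{c+\varepsilon}$ down into $E^{c-\varepsilon}$ in finite time while staying fixed away from the strip. An alternative route that sidesteps pseudo-gradients is to apply Ekeland's variational principle to the functional $l\mapsto\sup_{t\in[0,1]}E(l(t))$ on the complete metric space $P$ equipped with the uniform metric: a near-minimizing path yields, after the crossing argument of the first paragraph, a (PS) sequence at level $c$, whose limit — guaranteed by the (PS) condition — is the desired critical point; the cost there is checking the lower semicontinuity of that functional on $P$. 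Everything else reduces to bookkeeping with path-connectedness and the definition of $c$.
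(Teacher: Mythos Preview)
Your argument is correct and is precisely the classical deformation-lemma proof of the Mountain Pass Theorem. The paper itself gives no proof at all, simply citing Theorem 2.2 in Rabinowitz's monograph \cite{Rabinowitz1986}, where exactly this argument (minimax lower bound via the intermediate value theorem, then contradiction via the deformation theorem) is carried out.
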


\begin{proof}
See Theorem 2.2 in \cite{Rabinowitz1986}.
\end{proof}

\begin{proposition}[Symmetrical Mountain Pass
Theorem]
\label{prop2-12}
 Let $V$ be an infinite dimensional Banach space and suppose $E\in C^1(V, \mathbb{R})$ satisfies (PS) condition, $E(u)=E(-u)$ for all $u\in V$, and $E(0)=0$. If $V=V^-\oplus V^+$, where $V^-$ is finite dimensional, and $E$ satisfies the following conditions:
\begin{enumerate}
  \item [(1)] $\exists\alpha>0, \rho>0$, such that for any $u\in V^+$ with ${\|u\|_{V}}=\rho$, $E(u)\geq\alpha$.
  \item [(2)] For any finite dimensional subspace $W\subset V$ there is a constant $R=R(W)>0$ such that $E(u)\leq0$ for $u \in W, \|u\|_{V}\geq R$.
\end{enumerate}
Then $E$ possesses an unbounded sequence of critical values.
\end{proposition}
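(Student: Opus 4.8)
This is the $\mathbb{Z}_2$-symmetric minimax theorem, and I would prove it by the equivariant minimax method, following Rabinowitz's treatment of the symmetric mountain pass theorem, with the Krasnoselskii genus as the $\mathbb{Z}_2$-index. The preliminaries I would invoke are standard: the genus $\gamma(A)$ of a closed symmetric set $A=-A\subset V\setminus\{0\}$, with monotonicity, subadditivity $\gamma(A\cup B)\le\gamma(A)+\gamma(B)$, the Borsuk--Ulam normalization $\gamma(\partial O)=\dim V_0$ for $O$ a bounded symmetric neighbourhood of $0$ in a finite-dimensional subspace $V_0$, supervariance under odd continuous maps, and the fact that a compact such $A$ has finite genus and admits a symmetric neighbourhood of the same genus; and the equivariant deformation lemma --- since $E$ is even, a pseudo-gradient field for $E$ may be taken odd, so the (PS) condition yields, for each $c\ne0$ that is not a critical value and each small $\varepsilon>0$, an odd homeomorphism $\eta$ of $V$ with $\eta(\{E\le c+\varepsilon\})\subset\{E\le c-\varepsilon\}$ and $\eta=\mathrm{id}$ on $\{E\le c-2\varepsilon\}$, together with the companion version relative to a compact critical set when $c$ is itself a critical value.

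Next I would construct the minimax values. Fix a basis $e_1,\dots,e_j$ of $V^-$ (so $j=\dim V^-$) and linearly independent vectors $e_{j+1},e_{j+2},\dots\in V^+$, put $V_m=\mathrm{span}\{e_1,\dots,e_m\}$, and by hypothesis (2) choose radii $R_1\le R_2\le\cdots$, all $>\rho$, with $E\le0$ on $V_m\setminus B_{R_m}$. With $D_m=\overline{B_{R_m}}\cap V_m$ and $\partial D_m:=\partial B_{R_m}\cap V_m$, let $\Gamma_m$ be the family of continuous odd maps $h:D_m\to V$ with $h|_{\partial D_m}=\mathrm{id}$, and for $m>j$ set
\[
c_m=\inf_{h\in\Gamma_m}\ \max_{u\in D_m}E(h(u)),
\]
which is finite because $\mathrm{id}\in\Gamma_m$ and $E$ is bounded on the compact set $h(D_m)$. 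Two facts run the argument. First, the \emph{intersection lemma}: for $m>j$ and every $h\in\Gamma_m$, the image $h(D_m)$ meets the sphere $\partial B_\rho\cap V^+$; combined with hypothesis (1) this gives $\max_{D_m}E\circ h\ge\alpha$ for every competitor, hence $c_m\ge\alpha>0$ for all $m>j$. Second, \emph{stability}: if $\eta$ is an odd homeomorphism equal to the identity on $\{E\le0\}$, then $\eta\circ h\in\Gamma_m$ for every $h\in\Gamma_m$, since $\eta\circ h$ is odd and on $\partial D_m$ one has $h=\mathrm{id}$ and $E\le0$, so $\eta\circ h=\mathrm{id}$ there.

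From here the proof is routine. If some $c_m$ ($m>j$) were a regular value, applying the deformation lemma at level $c_m$ (legitimate because $c_m\ge\alpha>0$) to a near-optimal $h\in\Gamma_m$ and using stability would produce a competitor in $\Gamma_m$ with strictly smaller maximum, a contradiction; so each $c_m$ is a critical value of $E$. To see that the critical values are unbounded, suppose they were all $\le\bar c$: then the set of critical points with energy in $[\alpha,\bar c]$ is compact by (PS), hence of finite genus $\kappa$, while the classes $\Gamma_m$ carry index tending to infinity with $m$; the standard multiplicity argument of the symmetric minimax principle --- a deformation relative to that critical set with a symmetric neighbourhood $N$ of it satisfying $\gamma(\overline N)=\kappa$, combined with the subadditivity of $\gamma$ and the fact that $\partial D_m$ (of genus $m$) is left fixed --- then derives a contradiction. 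Hence $c_m\to\infty$, and $E$ possesses an unbounded sequence of critical values.

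The step I expect to be the main obstacle is the intersection lemma: it is a genuine Borsuk--Ulam statement --- an odd map of the $m$-cell $D_m$ into $V$ that is the identity on $\partial D_m$ cannot avoid the sphere $\partial B_\rho\cap V^+$ once $m>\dim V^-$ --- and proving it cleanly needs either a direct degree argument or a careful reduction to the normalization property of $\gamma$. Closely tied to it is the genus bookkeeping in the unboundedness step: since $D_m$ contains the origin, the ``index'' of the minimax classes has to be read off from $\partial D_m$ and the $\mathbb{Z}_2$-action rather than from $D_m$ itself, and making this precise --- so that ``index larger than $\kappa$'' genuinely forces critical values above $\bar c$, for instance by recasting the $c_m$ through a Benci-type pseudo-index relative to the splitting $V=V^-\oplus V^+$ --- is the other delicate point. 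All the remaining steps --- well-definedness and finiteness of the $c_m$, the criticality of each $c_m$, and the manipulations around the deformation lemma --- are standard once the equivariant framework is in place.
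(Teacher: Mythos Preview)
The paper does not give its own proof of this proposition: it simply cites Theorem~9.12 in Rabinowitz's CBMS monograph \cite{Rabinowitz1986}. Your proposal is a faithful outline of precisely that cited proof --- the $\mathbb{Z}_2$-equivariant minimax scheme built on the Krasnoselskii genus, the classes $\Gamma_m$ of odd maps on growing finite-dimensional disks, the intersection lemma giving $c_m\ge\alpha$, and the deformation/genus argument for unboundedness --- so there is no discrepancy in approach to discuss. Your identification of the intersection lemma and the genus bookkeeping as the nontrivial steps is accurate; these are exactly the places where Rabinowitz invests the real work (his Proposition~9.23 and the surrounding index machinery).
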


\begin{proof}
See Theorem 9.12 in \cite{Rabinowitz1986}.
\end{proof}

\section{Existence and multiplicity of weak solutions without perturbation term}
\label{Section3}
In this section, we focus on the existence and multiplicity of weak solutions for problem \eqref{problem1-1} without the perturbation term, namely
\begin{equation}\label{problem3-1}
\left\{
      \begin{array}{cc}
      -\triangle_{X}u=f(x,u) & \mbox{in}~\Omega, \\[2mm]
      u=0           & \mbox{on}~\partial\Omega.
      \end{array}
 \right.
\end{equation}

\begin{definition}
We say that $u\in H_{X,0}^{1}(\Omega)$ is a weak solution of \eqref{problem3-1} if
\begin{equation}
\label{3-2}
  \int_{\Omega}Xu\cdot Xvdx-\int_{\Omega}f(x,u)vdx=0,~~~\mbox{for all}~v\in H_{X,0}^{1}(\Omega).
\end{equation}
\end{definition}
 Now, we introduce the following energy functional $E: H_{X,0}^{1}(\Omega)\to \mathbb{R}$, defined as
\begin{equation}\label{3-3}
  E(u)=\frac{1}{2}\int_{\Omega}{{|{Xu}|}^2} dx-\int_{\Omega}F(x,u) dx.
\end{equation}
Then we have
\begin{proposition}
\label{prop3-1}
If $f(x,u)$ satisfies the assumption $(f_{2})$, then
  \[E(u)=\frac{1}{2}\int_{\Omega}{{|{Xu}|}^2} dx-\int_{\Omega}F(x,u) dx\]
belongs to $C^{1}(H_{X,0}^{1}(\Omega), \mathbb{R})$. Thus the semilinear equation \eqref{problem3-1} is the Euler-Lagrange equation of the variational problem for the energy functional \eqref{3-3}. Moreover, the Fr\'{e}chet derivative of $E$ at $u$ is given by
\begin{equation}\label{3-4}
  \langle DE(u),v\rangle=\int_{\Omega}Xu\cdot Xvdx-\int_{\Omega}f(x,u)vdx,~~\mbox{for all}~~v\in H_{X,0}^{1}(\Omega).
\end{equation}
Therefore, the critical point of $E$ in $H_{X,0}^{1}(\Omega)$ is the weak solution of \eqref{problem3-1}.
\end{proposition}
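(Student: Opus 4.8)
The plan is to split $E$ into its quadratic principal part and its nonlinear lower-order part and to treat each separately, the latter by invoking Proposition~\ref{prop2-9}. Write $E=\Phi-J$, where $\Phi(u)=\frac12\int_{\Omega}|Xu|^2\,dx$ and $J(u)=\int_{\Omega}F(x,u)\,dx$. For $\Phi$ I would argue as follows: it is the quadratic form attached to the bounded symmetric bilinear form $B(u,v)=\int_{\Omega}Xu\cdot Xv\,dx$ on $H_{X,0}^1(\Omega)$, and Cauchy--Schwarz gives $|B(u,v)|\le\|Xu\|_{L^2(\Omega)}\|Xv\|_{L^2(\Omega)}\le\|u\|_{H_{X,0}^1(\Omega)}\|v\|_{H_{X,0}^1(\Omega)}$, so $v\mapsto B(u,v)$ is an element of $H_X^{-1}(\Omega)$. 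Expanding $\Phi(u+h)-\Phi(u)-B(u,h)=\frac12 B(h,h)=o(\|h\|_{H_{X,0}^1(\Omega)})$ shows $\Phi$ is Fr\'echet differentiable with $\langle D\Phi(u),v\rangle=B(u,v)$, and $\|D\Phi(u)-D\Phi(u_0)\|_{H_X^{-1}(\Omega)}=\sup_{\|v\|_{H_{X,0}^1(\Omega)}\le1}|B(u-u_0,v)|\le\|u-u_0\|_{H_{X,0}^1(\Omega)}$, so $u\mapsto D\Phi(u)$ is (Lipschitz) continuous; hence $\Phi\in C^1(H_{X,0}^1(\Omega),\mathbb{R})$.

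Next I would bring $J$ under the hypotheses of Proposition~\ref{prop2-9} by deducing the required growth bounds from $(f_2)$. From $(f_2)$ we have $|f(x,u)|\le C(1+|u|^{p-1})$ with $2<p<\frac{2\tilde{\nu}}{\tilde{\nu}-2}$, hence $1<p-1<\frac{2\tilde{\nu}}{\tilde{\nu}-2}-1=\frac{\tilde{\nu}+2}{\tilde{\nu}-2}$, which is precisely condition (2) of Proposition~\ref{prop2-9} with $s_2=p-1$. Integrating this bound in the second variable,
\[
|F(x,u)|\le\int_0^{|u|}|f(x,t)|\,dt\le C\Big(|u|+\tfrac1p|u|^p\Big)\le C'(1+|u|^p),
\]
and since $1<p<\frac{2\tilde{\nu}}{\tilde{\nu}-2}$, this is condition (1) of Proposition~\ref{prop2-9} with $s_1=p$. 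Here the hypothesis $\tilde{\nu}\ge3$ is what makes $\frac{2\tilde{\nu}}{\tilde{\nu}-2}$ finite and, jointly with $(f_2)$, what places $s_1=p$ and $s_2=p-1$ inside the admissible ranges. Proposition~\ref{prop2-9} then gives $J\in C^1(H_{X,0}^1(\Omega),\mathbb{R})$ with $\langle DJ(u),v\rangle=\int_{\Omega}f(x,u)v\,dx$ for all $v\in H_{X,0}^1(\Omega)$.

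Combining the two parts, $E=\Phi-J\in C^1(H_{X,0}^1(\Omega),\mathbb{R})$ and, for all $v\in H_{X,0}^1(\Omega)$,
\[
\langle DE(u),v\rangle=\int_{\Omega}Xu\cdot Xv\,dx-\int_{\Omega}f(x,u)v\,dx,
\]
which is exactly \eqref{3-4}. Finally I would observe that $DE(u)=0$ in $H_X^{-1}(\Omega)$ means precisely that \eqref{3-2} holds, i.e.\ $u$ is a weak solution of \eqref{problem3-1}, and conversely every weak solution is a critical point of $E$; this identifies \eqref{problem3-1} as the Euler--Lagrange equation of $E$. I do not expect a genuine obstacle in this proof: the quadratic term $\Phi$ is classical, and the only point demanding care is the exponent bookkeeping of the previous paragraph, where the constraint $2<p<\frac{2\tilde{\nu}}{\tilde{\nu}-2}$ in $(f_2)$ (together with $\tilde{\nu}\ge3$) is used to verify the growth conditions of Proposition~\ref{prop2-9}.
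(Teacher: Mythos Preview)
Your proposal is correct and follows essentially the same route as the paper: split $E$ into its quadratic part and the nonlinear part $J$, handle the quadratic part directly, and feed $J$ into Proposition~\ref{prop2-9} after checking the growth exponents $s_2=p-1$ and $s_1=p$ fall in the admissible ranges. The only cosmetic differences are that the paper verifies $C^1$ for the quadratic part via continuity of the G\^{a}teaux derivative (rather than your direct Fr\'echet expansion) and bounds $|F|$ by $\tilde C(1+|u|^s)$ with an auxiliary $s\in(p,2_{\tilde\nu}^*]$ instead of $s_1=p$; neither choice affects the argument.
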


\begin{proof}
Let
 \[ E(u)=\frac{1}{2}\int_{\Omega}|Xu|^2dx-\int_{\Omega}F(x,u)dx=:I(u)-J(u),~~\mbox{for}~u\in H_{X,0}^1(\Omega),\]
where
\[ I(u)=\frac{1}{2}\int_{\Omega}|Xu|^2dx\quad \mbox{and}\quad J(u)=\int_{\Omega}F(x,u)dx.\]
Clearly, the functional $I(u)$ is well-defined on $H_{X,0}^{1}(\Omega)$ and has the G\^{a}teaux derivative $I'(u)$ given by
\begin{equation}\label{3-5}
\langle I'(u),v\rangle=\int_{\Omega}Xu\cdot Xvdx, \quad\forall v\in H_{X,0}^1(\Omega).
\end{equation}
For $u,u_0 \in H_{X,0}^1(\Omega)$, we have
\begin{align*}
 \|I'(u)-I'(u_0)\|_{{H_X^{-1}}(\Omega)}&=\sup\limits_{
 {v\in H_{X,0}^1(\Omega)},{\|v\|_{{H_{X,0}^1}(\Omega)}\leq1}}|\langle I'(u)-I'(u_0),v\rangle|   \\
 &\leq\sup\limits_{{v\in H_{X,0}^1(\Omega)},
{\|v\|_{{H_{X,0}^1}(\Omega)}\leq1}}\int_{\Omega}|Xu-Xu_0||Xv|dx\\
&\leq\sup\limits_{{v\in H_{X,0}^1(\Omega)},
{\|v\|_{{H_{X,0}^1}(\Omega)}\leq1}}\left(\int_{\Omega}|X(u-u_0)|^2dx\right)^{\frac{1}{2}}\|v\|_{{H_{X,0}^1}(\Omega)}\\
&\leq\|u-u_0\|_{{H_{X,0}^1}(\Omega)}.
 \end{align*}
 Hence, we obtain that  $I'(u)\rightarrow I'(u_0)$ in $H_X^{-1}(\Omega)$, provided $u\rightarrow u_0$ in $H_{X,0}^1(\Omega)$ .
 That means $I\in C^{1}(H_{X,0}^{1}(\Omega), \mathbb{R})$ and the Fr\'{e}chet derivative $DI(u)=I'(u)$ for all $u\in H_{X,0}^{1}(\Omega)$.\par
   Our task now is to prove that $J\in C^{1}(H_{X,0}^{1}(\Omega), \mathbb{R})$. By the assumption $(f_{2})$, there exist $2<p<\frac{2\tilde{\nu}}{\tilde{\nu}-2}$ and constant $C>0$ such that
\begin{equation}\label{3-6}
|f(x,v)|\leq C(1+|v|^{p-1}), \quad \mbox{for all}~x\in\overline{\Omega},~~ v\in \mathbb{R}.
\end{equation}
Integrating \eqref{3-6} with respect to $v$, we get
\begin{equation}\label{3-7}
|F(x,u)|=\left|\int_{0}^{u}f(x,v)dv\right|\leq   \left|\int_{0}^{u}|f(x,v)|dv\right|\leq C(|u|+|u|^{p})\leq \widetilde{C}(1+|{u}|^{s}),
\end{equation}
where $s$ is a positive constant satisfying  $2<p<s\leq{\frac{2\tilde{\nu}}{\tilde{\nu}-2}}$. Combining \eqref{3-6}, \eqref{3-7} and Proposition \ref{prop2-9}, we conclude $J\in C^{1}(H_{X,0}^{1}(\Omega), \mathbb{R})$ and the Fr\'{e}chet derivative
$DJ$ at $u$ is given by
 \begin{equation}\label{3-8}
   \langle DJ(u),v\rangle=\int_{\Omega}f(x,u)vdx, ~\forall v\in H_{X,0}^1(\Omega).
 \end{equation}

 Therefore, the functional $E\in C^{1}(H_{X,0}^{1}(\Omega), \mathbb{R})$ and its Fr\'{e}chet derivative is given by
\begin{equation}\label{3-9}
 \langle DE(u),v\rangle=\int_{\Omega}\left(Xu\cdot Xv-f(x,u)v\right)dx,~\forall v\in H_{X,0}^1(\Omega).
\end{equation}

\end{proof}

\begin{proposition}
\label{prop3-2}
If $f(x,u)$ satisfies the assumptions $(f_{2})$ and $(f_{3})$, then $C^{1}$-functional
  \[E(u)=\frac{1}{2}\int_{\Omega}{{|{Xu}|}^2} dx-\int_{\Omega}F(x,u) dx\]
satisfies (PS) condition.
\end{proposition}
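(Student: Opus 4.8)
The plan is to verify the two hypotheses of Proposition \ref{prop2-10} with $V = H_{X,0}^1(\Omega)$ and $E$ as above; once both are checked, the (PS) condition follows immediately. By Proposition \ref{prop3-1}, $E \in C^1(H_{X,0}^1(\Omega),\mathbb{R})$, so it remains to establish boundedness of (PS) sequences and the required decomposition of $DE$.

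\emph{Step 1: (PS) sequences are bounded.} Let $\{u_m\}$ be a (PS) sequence, so $|E(u_m)| \leq C$ and $\|DE(u_m)\|_{H_X^{-1}(\Omega)} \to 0$. The standard Ambrosetti--Rabinowitz argument applies: for $m$ large, $\|DE(u_m)\|_{H_X^{-1}} \leq 1$, so
\[
E(u_m) - \frac{1}{q}\langle DE(u_m), u_m\rangle \geq E(u_m) - \frac{1}{q}\|u_m\|_{H_{X,0}^1(\Omega)}.
\]
On the other hand, expanding the left-hand side using \eqref{3-3} and \eqref{3-4} gives
\[
E(u_m) - \frac{1}{q}\langle DE(u_m), u_m\rangle = \left(\frac{1}{2} - \frac{1}{q}\right)\int_\Omega |Xu_m|^2\,dx + \int_\Omega \left(\frac{1}{q}f(x,u_m)u_m - F(x,u_m)\right)dx.
\]
On the set $\{|u_m| \geq R_0\}$ the integrand is nonnegative by $(f_3)$; on the complementary set $\{|u_m| < R_0\}$ it is bounded below by a constant depending only on $R_0$, $q$ and the bound for $f$ from $(f_2)$, hence its integral over $\Omega$ is bounded below by $-C'$ (using $|\Omega| < \infty$). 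Combining, and using $q > 2$ so that $\frac12 - \frac1q > 0$, I get
\[
\left(\frac{1}{2} - \frac{1}{q}\right)\|Xu_m\|_{L^2(\Omega)}^2 \leq C + C' + \frac{1}{q}\|u_m\|_{H_{X,0}^1(\Omega)},
\]
and since $\|u_m\|_{H_{X,0}^1(\Omega)}^2 = \|u_m\|_{L^2}^2 + \|Xu_m\|_{L^2}^2$ is comparable to $\|Xu_m\|_{L^2}^2$ by the Poincar\'e inequality \eqref{2-6}, this yields a bound of the form $a\|u_m\|_{H_{X,0}^1(\Omega)}^2 \leq b + c\|u_m\|_{H_{X,0}^1(\Omega)}$ with $a,c>0$, which forces $\{u_m\}$ to be bounded in $H_{X,0}^1(\Omega)$.

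\emph{Step 2: decomposition of $DE$ and compactness.} Write $DE(u) = L(u) + K(u)$ where $L := DI$, i.e. $\langle L(u),v\rangle = \int_\Omega Xu\cdot Xv\,dx$, and $K := -DJ$, i.e. $\langle K(u),v\rangle = -\int_\Omega f(x,u)v\,dx$. The map $L: H_{X,0}^1(\Omega) \to H_X^{-1}(\Omega)$ is exactly the Riesz isomorphism for the inner product $\int_\Omega Xu\cdot Xv\,dx$ (equivalent to the norm on $H_{X,0}^1(\Omega)$ by \eqref{2-6}), so it is a bounded invertible linear map. For $K$: given a bounded set $B \subset H_{X,0}^1(\Omega)$, I must show $K(B)$ is relatively compact in $H_X^{-1}(\Omega)$. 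Take any sequence $\{u_m\} \subset B$; by Proposition \ref{prop2-7} (degenerate Rellich--Kondrachov) the embedding $H_{X,0}^1(\Omega) \hookrightarrow L^p(\Omega)$ is compact since $(f_2)$ gives $p < 2_{\tilde\nu}^*$, so after passing to a subsequence $u_m \to u$ strongly in $L^p(\Omega)$. By $(f_2)$, Proposition \ref{prop2-8} with $s = p-1$ and exponent $\frac{p}{p-1}$ shows $u \mapsto f(\cdot,u(\cdot))$ is continuous from $L^p(\Omega)$ into $L^{p/(p-1)}(\Omega)$, hence $f(\cdot,u_m) \to f(\cdot,u)$ in $L^{p/(p-1)}(\Omega)$. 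Then by H\"older and the boundedness of $H_{X,0}^1(\Omega) \hookrightarrow L^p(\Omega)$,
\[
\|K(u_m) - K(u)\|_{H_X^{-1}(\Omega)} = \sup_{\|v\|_{H_{X,0}^1(\Omega)} \leq 1} \left|\int_\Omega (f(x,u_m) - f(x,u))v\,dx\right| \leq C\|f(\cdot,u_m) - f(\cdot,u)\|_{L^{p/(p-1)}(\Omega)} \to 0,
\]
so $K(u_m) \to K(u)$ in $H_X^{-1}(\Omega)$, proving relative compactness. With both hypotheses of Proposition \ref{prop2-10} verified, $E$ satisfies the (PS) condition.

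\emph{Main obstacle.} The only genuinely delicate point is Step 1: one must handle the contribution of the region $\{|u_m| < R_0\}$ — where $(f_3)$ gives no control — carefully, using $(f_2)$ together with the finiteness of $|\Omega|$ to absorb it into the constant, and one must invoke the Poincar\'e inequality \eqref{2-6} to convert the gradient bound into a full $H_{X,0}^1$-norm bound. Step 2 is then routine given the compact embedding of Proposition \ref{prop2-7}, which is precisely what replaces the classical Rellich--Kondrachov theorem in this degenerate setting.
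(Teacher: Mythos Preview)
Your proof is correct and follows essentially the same route as the paper: verify boundedness of (PS) sequences via the Ambrosetti--Rabinowitz combination $E(u_m)-\tfrac{1}{q}\langle DE(u_m),u_m\rangle$ (the paper uses the rescaled version $qE(u_m)-\langle DE(u_m),u_m\rangle$), then decompose $DE=L+K$ with $L$ the coercive bilinear form (Lax--Milgram/Poincar\'e) and $K$ compact via the degenerate Rellich--Kondrachov embedding and Nemytskii continuity --- the paper uses the exponent pair $\bigl((p-1)\tfrac{2\tilde\nu}{\tilde\nu+2},\,\tfrac{2\tilde\nu}{\tilde\nu+2}\bigr)$ instead of your $\bigl(p,\tfrac{p}{p-1}\bigr)$, but both are valid since $p<2_{\tilde\nu}^*$. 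One minor slip: in Step~1 your first displayed inequality is a (true but unhelpful) lower bound, whereas the argument actually needs the \emph{upper} bound $E(u_m)-\tfrac{1}{q}\langle DE(u_m),u_m\rangle\leq C+\tfrac{1}{q}\|u_m\|_{H_{X,0}^1(\Omega)}$; your subsequent conclusion is nonetheless correct.
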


\begin{proof}
 Consider the following bilinear form
\begin{equation}\label{3-10}
a[u,v]=\int_{\Omega}Xu\cdot Xvdx,~~\mbox{for}~u,v ~\mbox{in}~H_{X,0}^1(\Omega).
\end{equation}
Clearly,
\begin{equation}\label{3-11}
  |a[u,v]|=\left|\int_{\Omega}Xu\cdot Xvdx\right|\leq \|u\|_{H_{X,0}^{1}(\Omega)}\cdot \|v\|_{H_{X,0}^{1}(\Omega)}.
\end{equation}
Therefore, for any $u\in  H_{X,0}^{1}(\Omega)$, the bilinear form $a[u,\cdot]\in H_{X}^{-1}(\Omega)$ determines a functional $L(u)\in H_{X}^{-1}(\Omega)$ given by
\begin{equation}\label{3-12}
  \langle L(u),v\rangle=a[u,v]~~\mbox{for all}~v\in H_{X,0}^{1}(\Omega).
\end{equation}
Note that  \eqref{3-12} also implies that $L: u\mapsto L(u)$ is a linear operator from  $H_{X,0}^{1}(\Omega)$ to $H_{X}^{-1}(\Omega)$. By \eqref{3-11} and \eqref{3-12}, we have
\begin{equation}\label{3-13}
\|L(u)\|_{H_{X}^{-1}(\Omega)}=\sup_{v\in H_{X,0}^{1}(\Omega), \|v\|_{H_{X,0}^{1}(\Omega)}\leq 1}|\langle L(u),v\rangle|\leq \|u\|_{H_{X,0}^{1}(\Omega)},
\end{equation}
which implies that $L$ is a bounded linear operator from  $H_{X,0}^{1}(\Omega)$ to $H_{X}^{-1}(\Omega)$. Owing to the weighted Poincar\'{e} inequality (Proposition \ref{Poincare}), we have
\begin{equation}\label{3-14}
  a[u,u]=\int_{\Omega}|Xu|^{2}dx\geq \frac{\lambda_{1}}{1+\lambda_{1}}\|u\|_{H_{X,0}^{1}(\Omega)}^{2}~~\mbox{for all}~u\in H_{X,0}^{1}(\Omega).
\end{equation}
Combining \eqref{3-11}, \eqref{3-14} and Lax-Milgram theorem, we obtain that $L:{H_{X,0}^1}(\Omega)\mapsto {H_{X}^{-1}(\Omega)}$ is a bounded invertible linear map.\par

On the other hand, for any $u\in H_{X,0}^{1}(\Omega)$, we can deduce  from estimate \eqref{2-11} that the linear functional $K(u)$ on $H_{X,0}^{1}(\Omega)$ given by
\begin{equation}\label{3-16}
\langle K(u),v\rangle =-\int_{\Omega}f(x,u)vdx~~~\mbox{for}~~v\in H_{X,0}^{1}(\Omega)
\end{equation}
belongs to $H_{X}^{-1}(\Omega)$. We then show that the operator $K: u\mapsto K(u)$ from $H_{X,0}^1(\Omega)$ to $H_{X}^{-1}(\Omega)$ maps bounded sets in $H_{X,0}^{1}(\Omega)$ to relatively compact sets in $H_{X}^{-1}(\Omega)$. For any bounded sequence $\{u_k\}_{k=1}^{\infty}$ in $H_{X,0}^1(\Omega)$, there exists a subsequence $\{u_{k_j}\}_{j=1}^{\infty}\subset \{u_k\}_{k=1}^{\infty}$ such that $u_{k_j}\rightharpoonup u$ weakly in $H_{X,0}^1(\Omega)$ as $j\to +\infty$. Since $1<(p-1)\cdot \frac{2\tilde{\nu}}{\tilde{\nu}+2}<\frac{2\tilde{\nu}}{\tilde{\nu}-2}$, by Proposition \ref{prop2-7} we know that the embedding $H_{X,0}^{1}(\Omega)\hookrightarrow L^{(p-1)\cdot \frac{2\tilde{\nu}}{\tilde{\nu}+2}}(\Omega)$ is compact, which implies that $u_{k_{j}}\to u$ in $L^{(p-1)\cdot \frac{2\tilde{\nu}}{\tilde{\nu}+2}}(\Omega)$. Besides, the Proposition \ref{prop2-8} tells us the map $u\mapsto f(\cdot, u(\cdot))$ is continuous from $L^{(p-1)\cdot \frac{2\tilde{\nu}}{\tilde{\nu}+2}}(\Omega)$ into $L^{\frac{2\tilde{\nu}}{\tilde{\nu}+2}}(\Omega)$. Thus, we have $f(\cdot,u_{k_j})\rightarrow f(\cdot,u)$ in $L^{\frac{2\tilde{\nu}}{\tilde{\nu}+2}}(\Omega)$ as $j\to +\infty$.
Moreover, the H\"{o}lder's inequality and \eqref{2-5} yield that
\begin{equation}\label{3-17}
\begin{aligned}
&\|K(u_{k_j})-K(u)\|_{{H_X^{-1}}(\Omega)}\\
&=\sup\limits_{
 {v\in H_{X,0}^1(\Omega)},{\|v\|_{{H_{X,0}^1}(\Omega)}\leq1}}|\langle K(u_{k_j})-K(u),v\rangle|\\
&\leq \sup\limits_{
 {v\in H_{X,0}^1(\Omega)},{\|v\|_{{H_{X,0}^1}(\Omega)}\leq1}}\int_{\Omega}|f(x,u_{k_j})-f(x,u)||v|dx\\
&\leq \sup\limits_{
 {v \in H_{X,0}^1(\Omega)},{\|v\|_{{H_{X,0}^1}(\Omega)}\leq1}}
 \left(\int_{\Omega}|f(x,u_{k_j})-f(x,u)|^{\frac{2\tilde{\nu}}{\tilde{\nu}+2}}dx\right)^{\frac{\tilde{\nu}+2}{2\tilde{\nu}}}
 \left(\int_{\Omega}|v|^{\frac{2\tilde{\nu}}{\tilde{\nu}-2}}dx\right)^{\frac{\tilde{\nu}-2}{2\tilde{\nu}}}\\
&\leq C\|f(x,u_{k_j})-f(x,u)\|_{L^{\frac{2\tilde{\nu}}{\tilde{\nu}+2}}(\Omega)}.
\end{aligned}
\end{equation}
Consequently, $K(u_{k_{j}})\to K(u)$ in $H_{X}^{-1}(\Omega)$ as $j\to +\infty$, and   $K$ maps bounded sets in ${H_{X,0}^1}(\Omega)$ to relatively compact sets in $H_X^{-1}(\Omega)$.\par

  Hence, by \eqref{3-4}, \eqref{3-12} and \eqref{3-16}, we know the Fr\'{e}chet derivative of  $E$ can be decomposed into
\begin{equation}
\label{3-18}
  DE(u)=L(u)+K(u)\qquad\mbox{for any}~~u\in H_{X,0}^{1}(\Omega),
\end{equation}
where $L:{H_{X,0}^1}(\Omega)\mapsto {H_{X}^{-1}(\Omega)}$ is a bounded invertible linear map and the operator $K$ maps bounded sets in ${H_{X,0}^1}(\Omega)$ to relatively compact sets in $H_X^{-1}(\Omega)$.\par

 Finally, according to Proposition \ref{prop2-10}, we can see that the Proposition \ref{prop3-2} will be proved by showing that any (PS) sequence is bounded in ${H_{X,0}^1}(\Omega)$.\par

Let $\{u_m\}_{m=1}^{\infty}$ be a (PS) sequence of $E$. Then we have
\[ |E_{1}(u_{m})|\leq C\qquad \mbox{for all}~~ m\geq 1~~~\mbox{and}~~ \|DE_{1}(u_m)\|_{H_X^{-1}(\Omega)}\to 0~~~\mbox{as}~~~m\to+\infty.\]
Thus, owing to assumption $(f_{3})$, we obtain
\begin{equation}
\label{3-19}
\begin{aligned}
&qC+\|DE(u_m)\|_{H_X^{-1}(\Omega)}\|u_m\|_{{H_{X,0}^1}(\Omega)}\\
&\geq qE(u_m)-\langle DE(u_m),u_m\rangle\\
&=\frac{q-2}{2}\int_{\Omega}|Xu_m|^2dx+\int_{\Omega}(f(x,u_m)u_m-qF(x,u_m))dx\\
&\geq \frac{q-2}{2}\int_{\Omega}|Xu_m|^2dx-\int_{\Omega_{1}}|f(x,u_m)u_m-qF(x,u_m)|dx\\
&\geq \frac{q-2}{2}\cdot \frac{\lambda_{1}}{1+\lambda_{1}} \|u_m\|_{H_{X,0}^1(\Omega)}^2-|\Omega|\max_{x\in{\overline{\Omega}},|v|\leq R_0}|f(x,v)v-qF(x,v)|,
\end{aligned}
\end{equation}
where $\Omega_1=\{x\in \Omega||u_m|\leq R_0\}$. Since $(f(x,v)v-qF(x,v))\in C(\overline{\Omega}\times \mathbb{R})$, we know the last term in \eqref{3-19} is finite. Thus, by \eqref{3-19} and Young's inequality, we can conclude $\{u_m\}_{m=1}^{\infty}$ is bounded in $H_{X,0}^{1}(\Omega)$.
\end{proof}

\begin{proposition}
\label{prop3-3}
If $f(x,u)$ satisfies the assumptions $(f_{1}),(f_{2})$ and $(f_{3})$, then there exist $\rho>0$ and $\alpha>0$, such that
\begin{enumerate}
  \item [(1)] $E(u)\geq \alpha$, for any $u\in H_{X,0}^{1}(\Omega)$ with $\|u\|_{H_{X,0}^{1}(\Omega)}=\rho$.
  \item [(2)] There exists a $u_{1}\in H_{X,0}^{1}(\Omega)$ such that $\|u_{1}\|_{H_{X,0}^{1}(\Omega)}>\rho$ and $E(u_{1})\leq 0$.
\end{enumerate}

\end{proposition}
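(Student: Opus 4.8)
The plan is to establish the two geometric conditions separately, each resting on a sharpened pointwise control of $F(x,u)$. For part (1), I would first combine $(f_1)$ and $(f_2)$ into a single bound: given $\varepsilon>0$, assumption $(f_1)$ yields $\delta>0$ with $|f(x,u)|\le 2\varepsilon|u|$ for $|u|\le\delta$ uniformly in $x$, hence $|F(x,u)|\le\varepsilon|u|^2$ there; for $|u|>\delta$, integrating the growth bound in $(f_2)$ and absorbing the constant into a power of $|u|$ gives $|F(x,u)|\le C_\varepsilon|u|^p$. Together,
\[
|F(x,u)|\le\varepsilon|u|^2+C_\varepsilon|u|^p\qquad\text{for all }x\in\overline\Omega,\ u\in\mathbb{R}.
\]
Substituting into $E$, using the weighted Poincar\'e inequality (Proposition \ref{Poincare}) to bound $\|u\|_{L^2(\Omega)}^2\le\lambda_1^{-1}\|Xu\|_{L^2(\Omega)}^2$, the coercivity estimate \eqref{3-14}, and the bounded embedding $H_{X,0}^1(\Omega)\hookrightarrow L^p(\Omega)$ from Corollary \ref{corollary2-1} (valid since $2<p<2_{\tilde\nu}^*$), one obtains, after fixing $\varepsilon=\lambda_1/4$,
\[
E(u)\ge\Big(\tfrac12-\tfrac{\varepsilon}{\lambda_1}\Big)\|Xu\|_{L^2(\Omega)}^2-C_\varepsilon\|u\|_{L^p(\Omega)}^p\ge c_1\|u\|_{H_{X,0}^1(\Omega)}^2-c_2\|u\|_{H_{X,0}^1(\Omega)}^p
\]
with $c_1,c_2>0$. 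Since $p>2$, the scalar function $t\mapsto c_1t^2-c_2t^p$ is strictly positive for small $t>0$; choosing $\rho>0$ small enough and $\alpha:=c_1\rho^2-c_2\rho^p>0$ proves (1).

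For part (2), the crucial step is to upgrade the Ambrosetti--Rabinowitz-type condition $(f_3)$ to a genuine superquadratic lower bound. For $u\ge R_0$, $(f_3)$ reads $\frac{d}{du}\log F(x,u)\ge q/u$, so integrating from $R_0$ to $u$ gives $F(x,u)\ge F(x,R_0)(u/R_0)^q$; since $x\mapsto F(x,R_0)$ is continuous and strictly positive on the compact set $\overline\Omega$, we get $F(x,u)\ge a_0|u|^q$ for $|u|\ge R_0$ with $a_0>0$ (the case $u\le-R_0$ is identical). I would then fix a nonzero $\varphi\in C_0^\infty(\Omega)$ with $\varphi\ge0$ and test with $u_1=t\varphi$ for large $t>0$: splitting $\Omega$ into $\{t\varphi\ge R_0\}$ and its complement, where $F(x,t\varphi)$ is bounded by continuity on $\overline\Omega\times[-R_0,R_0]$, yields
\[
E(t\varphi)\le\frac{t^2}{2}\|X\varphi\|_{L^2(\Omega)}^2-a_0t^q\!\int_{\{\varphi\ge R_0/t\}}\!\varphi^q\,dx+C|\Omega|.
\]
As $t\to+\infty$ the integral increases to $\int_{\{\varphi>0\}}\varphi^q\,dx>0$, so the $t^q$ term overwhelms the $t^2$ term and $E(t\varphi)\to-\infty$; picking $t$ large enough that simultaneously $\|t\varphi\|_{H_{X,0}^1(\Omega)}>\rho$ and $E(t\varphi)\le0$, and setting $u_1=t\varphi$, finishes (2).

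The main obstacle is part (2): correctly passing from the local hypothesis $(f_3)$, valid only for $|u|\ge R_0$, to the superquadratic bound $F(x,u)\ge a_0|u|^q$, and then verifying that the contribution of the shrinking region $\{|t\varphi|<R_0\}$ stays bounded uniformly in $t$ so that it cannot spoil the $-t^q$ decay. Part (1) is comparatively routine once the combined growth bound on $F$ and the Sobolev/Poincar\'e estimates are assembled.
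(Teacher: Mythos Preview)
Your proposal is correct and follows essentially the same approach as the paper: for (1), combine $(f_1)$--$(f_2)$ into $|F(x,u)|\le\varepsilon|u|^2+C_\varepsilon|u|^p$ and apply Poincar\'e plus the Sobolev embedding to get $E(u)\ge c_1\|u\|^2-c_2\|u\|^p$; for (2), integrate $(f_3)$ to obtain $F(x,u)\ge a_0|u|^q$ on $|u|\ge R_0$, split the domain, and scale a fixed test function. The only cosmetic difference is that the paper tests with an arbitrary nonzero $u_0\in H_{X,0}^1(\Omega)$ rather than a nonnegative $\varphi\in C_0^\infty(\Omega)$.
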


\begin{proof}

The assumption $(f_{1})$ implies that, for any $\varepsilon>0$, there exists $\delta=\delta(\varepsilon)>0$ such that for $0<|u|<\delta$, $\left|\frac{f(x,u)}{u}\right|<\varepsilon$. Then, for $|u|\geq\delta$  (i.e. $1\leq \left(\frac{|u|}{\delta}\right)^{p-1}$), the assumption $(f_{2})$ yields that $$|f(x,u)|\leq C(1+|u|^{p-1})\leq C(\varepsilon)|u|^{p-1}$$
holds for all $x\in \overline{\Omega}$ with some constant $C(\varepsilon)>0$. Hence, we have
\begin{equation}\label{3-20}
|f(x,u)|\leq \varepsilon|u|+C(\varepsilon)|u|^{p-1},~~~\forall u\in{\mathbb{R}},~ x\in \overline{\Omega},
\end{equation}
and
\begin{equation}\label{3-21}
|F(x,u)|=\left|\int_{0}^{u}f(x,v)dv \right|\leq \varepsilon|u|^{2}+C(\varepsilon)|u|^{p},~~~\forall u\in{\mathbb{R}},~ x\in \overline{\Omega}.
\end{equation}
Thus, it follows from \eqref{3-21}, \eqref{2-5} and Proposition \ref{Poincare} that, there exist some constants $\varepsilon\in (0,\frac{\lambda_{1}}{4})$ and $\alpha>0$ such that
\begin{equation}\label{3-22}
\begin{aligned}
E(u)&=\frac{1}{2}\int_{\Omega}|Xu|^2dx-\int_{\Omega}F(x,u)dx\\
&\geq \frac{1}{2}\int_{\Omega}|Xu|^2dx-\varepsilon\int_{\Omega}|u|^2dx-C(\varepsilon)\int_{\Omega}|u|^pdx\\
&\geq \left(\frac{1}{2}- \frac{\varepsilon}{\lambda_1}\right)\int_{\Omega}|Xu|^2dx-C(\varepsilon)\int_{\Omega}|u|^pdx\\
&\geq \frac{\lambda_{1}}{1+\lambda_{1}}\left(\frac{1}{2}-\frac{\varepsilon}{\lambda_1}-
\widetilde{C}(\varepsilon)\|u\|^{p-2}_{{H_{X,0}^1}(\Omega)}\right)\|u\|^{2}_{{H_{X,0}^1}(\Omega)}
\geq\alpha>0,
\end{aligned}
\end{equation}
provided $\|u\|_{{H_{X,0}^1}(\Omega)}=\rho$ is sufficiently small. Here $\lambda_1>0$ is the first Dirichlet eigenvalue of $-\triangle_X$ on $\Omega$.\par

Next, the assumption $(f_{3})$ implies that for $|u|\geq R_0$ and all $x\in \overline{\Omega}$,
\begin{equation}\label{3-23}
u|u|^q\frac{\partial}{\partial u}(|u|^{-q}F(x,u))=f(x,u)u-qF(x,u)\geq 0.
\end{equation}
Then,  \eqref{3-23} gives that
\begin{equation}\label{3-24}
  F(x,u)\geq\gamma_0(x)|u|^q
\end{equation}
holds for $|u|\geq R_{0}$ and all $x\in \overline{\Omega}$ with $\gamma_0(x)=R_0^{-q}\min{\{F(x,R_0),F(x,-R_0)\}}>0$. Since $F(x,u)\in C(\overline{\Omega}\times\mathbb{R})$, there exists a constant $c>0$ such that
\begin{equation}\label{3-25}
\gamma_0(x)=R_0^{-q}\min{\{F(x,R_0),F(x,-R_0)\}}\geq c>0~~\mbox{for all}~~x\in \overline{\Omega}.
\end{equation}
Therefore, for any fixed $u_{0}\in {H_{X,0}^1}(\Omega)$ with $\|u_{0}\|_{{H_{X,0}^1}(\Omega)}\neq0$ and any $\lambda>0$, it follows from \eqref{3-24} and \eqref{3-25} that
\begin{equation}\label{3-26}
\begin{aligned}
E(\lambda u_{0})&=\frac{\lambda^2}{2}\int_{\Omega}|Xu_{0}|^2dx-\int_{\Omega}F(x,\lambda u_{0})dx\\
&\leq\frac{\lambda^2}{2}\|u_{0}\|_{H_{X,0}^{1}(\Omega)}^2-\int_{|\lambda u_{0}|\geq R_0}F(x,\lambda u_{0})dx+\int_{|\lambda u_{0}|\leq R_0}|F(x,\lambda u_{0})|dx\\
&\leq\frac{\lambda^2}{2}\|u_{0}\|_{H_{X,0}^{1}(\Omega)}^2-c\cdot\lambda^q\int_{|\lambda u_{0}|\geq R_0}|u_{0}|^qdx+|\Omega|\cdot\sup\limits_{x\in{\overline{\Omega}},|v|\leq R_0}|F(x,v)|\\
&\to -\infty, ~\mbox{as}~\lambda\rightarrow {+\infty}.
\end{aligned}
\end{equation}
Hence, for sufficiently large $\lambda>0$, taking $u_1=\lambda u_{0}$, we can conclude from \eqref{3-26} that
\begin{equation}\label{3-27}
  \|u_{1}\|_{H_{X,0}^{1}(\Omega)}> \rho\qquad \mbox{and}~~~ E(u_{1})\leq 0.
\end{equation}

\end{proof}

\begin{proposition}
\label{prop3-4}
Suppose that $f(x,u)$ satisfies assumptions $(f_2)$ and $(f_3)$. Then, we have the following conclusions:
\begin{enumerate}
  \item [(1)] There exist $k_{0}\in \mathbb{N}^{+}$, $\rho>0$ and $\alpha>0$ such that for all $u\in V_{k_{0}}=\text{span}~\{\varphi_k|k\geq k_0\}$ with $\|u\|_{H_{X,0}^{1}(\Omega)}=\rho$, $E(u)\geq\alpha$.
  \item [(2)] For any finite dimensional subspace $W\subset H_{X,0}^{1}(\Omega)$, there is a constant $R=R(W)>0$ such that $E(u)\leq 0$ for $u \in W$ with $ \|u\|_{H_{X,0}^{1}(\Omega)}\geq R$.
\end{enumerate}
\end{proposition}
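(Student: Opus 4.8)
The plan is to verify the two geometric hypotheses required by the Symmetric Mountain Pass Theorem (Proposition~\ref{prop2-12}), for the decomposition $H_{X,0}^1(\Omega)=V^-\oplus V^+$ with $V^-=\operatorname{span}\{\varphi_1,\dots,\varphi_{k_0-1}\}$ (finite dimensional) and $V^+=V_{k_0}$, where $\{\varphi_k\}_{k=1}^\infty$ is the eigenbasis of Proposition~\ref{prop2-4}. The crucial difference from Proposition~\ref{prop3-3} is that hypothesis $(f_1)$ is no longer assumed, so the refined bound \eqref{3-21} is unavailable; integrating $(f_2)$ yields only $|F(x,u)|\le C(|u|+|u|^p)$, which carries a genuinely linear term in $|u|$, and it is precisely to absorb this term that one has to restrict the sphere argument to the high-frequency subspace $V_{k_0}$.

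\textbf{Part (1).} For $u=\sum_{k\ge k_0}c_k\varphi_k\in V_{k_0}$ the orthogonality relations of Proposition~\ref{prop2-4} give $\int_\Omega|Xu|^2\,dx=\sum_{k\ge k_0}\lambda_kc_k^2\ge\lambda_{k_0}\int_\Omega|u|^2\,dx$, hence $\int_\Omega|u|^2\,dx\le\lambda_{k_0}^{-1}\|u\|_{H_{X,0}^1(\Omega)}^2$ and $\int_\Omega|Xu|^2\,dx\ge\frac{\lambda_{k_0}}{1+\lambda_{k_0}}\|u\|_{H_{X,0}^1(\Omega)}^2$. Combining $|F(x,u)|\le C(|u|+|u|^p)$, the H\"older inequality $\int_\Omega|u|\,dx\le|\Omega|^{1/2}\|u\|_{L^2(\Omega)}$, and the bounded embedding \eqref{2-5} (legitimate since $2<p<2_{\tilde\nu}^*$), I would reach, for $\|u\|_{H_{X,0}^1(\Omega)}=\rho$,
\[
E(u)\ge\frac12\cdot\frac{\lambda_{k_0}}{1+\lambda_{k_0}}\rho^2-C|\Omega|^{1/2}\lambda_{k_0}^{-1/2}\rho-C'\rho^{p},
\]
with $C,C'>0$ independent of $k_0$ and $\rho$. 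Then one first fixes $\rho>0$ so small that $C'\rho^{p-2}\le\tfrac1{16}$, and afterwards fixes $k_0$ so large that $\lambda_{k_0}\ge1$ and $C|\Omega|^{1/2}\lambda_{k_0}^{-1/2}\le\tfrac1{16}\rho$; both are possible because $\lambda_{k_0}\to+\infty$ (indeed $\lambda_{k_0}\ge C_1k_0^{2/\tilde\nu}$ by Proposition~\ref{prop2-5}). This yields $E(u)\ge\tfrac18\rho^2=:\alpha>0$ on $\{u\in V_{k_0}:\|u\|_{H_{X,0}^1(\Omega)}=\rho\}$.

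\textbf{Part (2).} Let $W\subset H_{X,0}^1(\Omega)$ be finite dimensional. I would first observe that $(f_2)$ and $(f_3)$ together force $q\le p<2_{\tilde\nu}^*$: by $(f_3)$ one has $F(x,u)\ge F(x,R_0)(|u|/R_0)^q$ for $|u|\ge R_0$, while by $(f_2)$, $F(x,u)\le C(1+|u|^p)$, and these are compatible for all large $|u|$ only if $q\le p$; consequently $H_{X,0}^1(\Omega)\hookrightarrow L^q(\Omega)$ and all norms on $W$ are equivalent. Next, from the computation behind \eqref{3-24}--\eqref{3-25} one has $F(x,u)\ge c|u|^q$ for $|u|\ge R_0$ with $c>0$; splitting $\Omega$ into $\{|u|\ge R_0\}$ and $\{|u|<R_0\}$ and bounding $F$ on the latter by $\max_{x\in\overline\Omega,\,|v|\le R_0}|F(x,v)|$ gives $\int_\Omega F(x,u)\,dx\ge c\|u\|_{L^q(\Omega)}^q-C_1$ for a constant $C_1=C_1(\Omega,R_0)>0$. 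Hence, using $\|u\|_{L^q(\Omega)}\ge c_W\|u\|_{H_{X,0}^1(\Omega)}$ on $W$,
\[
E(u)\le\frac12\|u\|_{H_{X,0}^1(\Omega)}^2-cc_W^q\,\|u\|_{H_{X,0}^1(\Omega)}^q+C_1,
\]
and since $q>2$ the right-hand side tends to $-\infty$ as $\|u\|_{H_{X,0}^1(\Omega)}\to+\infty$, producing the desired $R=R(W)>0$.

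The main obstacle is entirely in Part (1): without $(f_1)$ the linear-in-$|u|$ contribution to $F$ cannot be absorbed on a fixed sphere of $H_{X,0}^1(\Omega)$, which is exactly why the assertion is only claimed on $V_{k_0}$ rather than on all of $H_{X,0}^1(\Omega)$. The remedy is to replace the Poincar\'e constant $\lambda_1$ by $\lambda_{k_0}$, which tends to $+\infty$ by Proposition~\ref{prop2-5}, making the coefficient $\lambda_{k_0}^{-1/2}$ of the linear term arbitrarily small; the only delicate bookkeeping is the order of quantifiers ($\rho$ chosen before $k_0$). Part (2) is routine once \eqref{3-24}--\eqref{3-25} are available, the sole point worth recording being that $(f_2)$ and $(f_3)$ automatically keep $q$ subcritical.
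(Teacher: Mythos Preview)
Your proof is correct, and Part~(2) is essentially the paper's argument, with the pleasant addition that you explicitly verify $q\le p<2_{\tilde\nu}^*$ so that the $L^q$ norm is subcritical and the norm equivalence on $W$ is automatic.

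Part~(1), however, takes a genuinely different and more elementary route. The paper first applies Young's inequality $|u|\le\frac{1}{4\varepsilon}|u|^2+\varepsilon$ to the linear term and then uses the interpolation
\[
\|u\|_{L^p(\Omega)}^p\le\|u\|_{L^2(\Omega)}^{r}\|u\|_{L^{2_{\tilde\nu}^*}(\Omega)}^{p-r},\qquad \tfrac{r}{2}+\tfrac{p-r}{2_{\tilde\nu}^*}=1,
\]
so that the $L^2$ factor picks up a gain of $\lambda_{k_0}^{-r/2}$ on $V_{k_0}$; this introduces an additive constant (from the Young step), forcing $\rho$ to be fixed at a specific value and leaving $k_0$ to kill the remaining coefficient $C_1\lambda_{k_0}^{-r/2}\rho^{p-2}$. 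You instead bound $\int|u|\,dx$ by Cauchy--Schwarz and $\int|u|^p\,dx$ directly by the Sobolev embedding; the $p$-th power term then carries no $\lambda_{k_0}$ gain, but this costs nothing because you are free to take $\rho$ small first and only afterwards send $\lambda_{k_0}\to\infty$ to absorb the linear term. Your ordering of quantifiers is exactly right, and the argument is cleaner for the purpose of Proposition~\ref{prop3-4} alone.

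What the paper's interpolation buys is visible only later: the same computation, done quantitatively with $\rho=\varepsilon_0\lambda_k^{r/(2(p-2))}$, yields the key growth estimate $b_k\ge C_2\,k^{\frac{2p}{\tilde\nu(p-2)}-1}$ of Proposition~\ref{prop4-4}, which is the engine of Theorem~\ref{thm1-2}. Your approach here gives only $E(u)\ge\alpha$ for some fixed $\alpha$, which suffices for the Symmetric Mountain Pass Theorem (and hence for Theorem~\ref{thm1-1}), but would not by itself deliver the quantitative lower bound needed in Section~\ref{Section4}.
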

\begin{proof}
The assumption $(f_2)$ implies that
\begin{equation}
\label{3-28}
  |F(x,u)|=\left|\int_{0}^{u}f(x,v)dv\right|\leq \left|\int_{0}^{u}|f(x,v)|dv\right|\leq C(|u|+|u|^{p}).
\end{equation}
 Then, by \eqref{3-28} and Proposition \ref{Poincare}, for any $\varepsilon>0$ and $u\in V_{k_{0}}$ we obtain
\begin{equation}\label{3-29}
\begin{aligned}
E(u)&=\frac{1}{2}\int_{\Omega}|Xu|^2dx-\int_{\Omega}F(x,u)dx\\
&\geq\frac{1}{2}\int_\Omega|Xu|^2dx-C\int_\Omega|u|^pdx-C\int_\Omega|u|dx\\
& \geq\frac{1}{2}\int_\Omega|Xu|^2dx-C\left(\int_{\Omega}|u|^{2}dx\right)^{\frac{r}{2}}\left(\int_{\Omega}|u|^{2_{\tilde{\nu}}^{*}}dx \right)^{\frac{p-r}{2_{\tilde{\nu}}^{*}}} -C\int_\Omega\left(\frac{1}{4\varepsilon}|u|^{2}+\varepsilon\right)dx\\
&=\frac{\lambda_1}{2(1+\lambda_1)}\|u\|_{H_{X,0}^{1}(\Omega)}^2-C\|u\|_{L^2(\Omega)}^r\|u\|_{L^{2_{\tilde{\nu}}^*}(\Omega)}^{p-r}-\frac{C}{4\varepsilon}\|u\|_{L^2(\Omega)}^2-C\varepsilon|\Omega|,
\end{aligned}
\end{equation}
where $r$ is a positive constant such that $\frac{r}{2}+\frac{p-r}{2_{\tilde{\nu}}^*}=1$.

On the other hand, we have the following Rayleigh-Ritz formula
\begin{equation}\label{3-30}
  \lambda_{k}=\inf_{u\in H_{X,0}^{1}(\Omega), u\neq 0,~ u\perp\varphi_{1},\ldots,\varphi_{k-1}}\frac{\int_{\Omega}|Xu|^{2}dx}{\int_{\Omega}|u|^{2}dx}.
\end{equation}
Thus, for $u\in V_{k_{0}}=\text{span}~\{\varphi_k|k\geq k_0\}$,  \eqref{3-30} indicates that
\begin{equation}\label{3-31}
  \lambda_{k_{0}}\int_{\Omega}|u|^{2}dx \leq \int_{\Omega}|Xu|^{2}dx.
\end{equation}
Take $\varepsilon=C\left(1+\frac{1}{\lambda_{1}}\right)$ in \eqref{3-29}. It follows from \eqref{2-5}, \eqref{3-29} and \eqref{3-31} that

\begin{equation}\label{3-32}
\begin{aligned}
E(u)&\geq \frac{\lambda_1}{2(1+\lambda_1)}\|u\|_{H_{X,0}^{1}(\Omega)}^2-C\|u\|_{L^2(\Omega)}^r\|u\|_{L^{2_{\tilde{\nu}}^*}(\Omega)}^{p-r}-\frac{C}{4\varepsilon}\|u\|_{L^2(\Omega)}^2-C\varepsilon|\Omega|\\
&\geq\frac{\lambda_1}{4(1+\lambda_1)}\|u\|_{H_{X,0}^{1}(\Omega)}^2-C\|u\|_{L^2(\Omega)}^r\|u\|_{L^{2_{\tilde{\nu}}^*}(\Omega)}^{p-r}-C^{2}(1+\lambda_{1}^{-1})|\Omega|\\
&\geq \frac{\lambda_1}{4(1+\lambda_1)}\|u\|_{H_{X,0}^{1}(\Omega)}^2-C_{1}\lambda_{k_{0}}^{-\frac{r}{2}}\|u\|_{H_{X,0}^{1}(\Omega)}^p-C^{2}(1+\lambda_{1}^{-1})|\Omega|\\
&=\left(\frac{\lambda_1}{4(1+\lambda_1)}-C_{1}\lambda_{k_{0}}^{-\frac{r}{2}}\|u\|_{H_{X,0}^{1}(\Omega)}^{p-2}   \right)\|u\|_{H_{X,0}^{1}(\Omega)}^2-C^{2}(1+\lambda_{1}^{-1})|\Omega|\\
&=\left(\frac{\lambda_1}{8(1+\lambda_1)}-C_{1}\lambda_{k_{0}}^{-\frac{r}{2}}\|u\|_{H_{X,0}^{1}(\Omega)}^{p-2}   \right)\|u\|_{H_{X,0}^{1}(\Omega)}^2+\frac{\lambda_1}{8(1+\lambda_1)}\|u\|_{H_{X,0}^{1}(\Omega)}^2-C^{2}(1+\lambda_{1}^{-1})|\Omega|
\end{aligned}
\end{equation}
holds for all $u\in V_{k_{0}}$, where $C_{1}>0$ is a constant. Then, we can find a positive constant $\rho>0$ such that $\frac{\lambda_{1}}{8(1+\lambda_{1})}\rho^{2}=C^{2}(1+\lambda_{1}^{-1})|\Omega|+1$, and choose a $k_{0}\in \mathbb{N}^{+}$ such that $\frac{\lambda_1}{8(1+\lambda_1)}\geq C_{1}\lambda_{k_{0}}^{-\frac{r}{2}}\rho^{p-2}$. Therefore, we can deduce from \eqref{3-32} that
\[E(u)\geq 1=:\alpha,~\mbox{for all}~ u\in V_{k_{0}}~\mbox{with}~\|u\|_{H_{X,0}^{1}(\Omega)}=\rho. \]
This prove the conclusion (1) of Proposition \ref{prop3-4}.

Next, for any finite dimensional subspace $W\subset H_{X,0}^1(\Omega)$, if $u\in W$ such that $\|u\|_{H_{X,0}^{1}(\Omega)}=\rho>0$, we let $v=\frac{u}{\rho}\in W$ with $\|v\|_{H_{X,0}^{1}(\Omega)}=1$. An argument similar to \eqref{3-26} shows that
\begin{equation}\label{3-33}
\begin{aligned}
E(u)&=E(\rho v)=\frac{\rho^2}{2}\int_{\Omega}|Xv|^2dx-\int_{\Omega}F(x,\rho v)dx\\
&\leq\frac{\rho^2}{2}-c\cdot\rho^q\int_{|\rho v|\geq R_0}|v|^qdx+|\Omega|\cdot\sup\limits_{x\in{\overline{\Omega}},|w|\leq R_0}|F(x,w)|\\
&\to -\infty, ~\mbox{as}~\rho\rightarrow {+\infty},
\end{aligned}
\end{equation}
which yields the conclusion (2).

\end{proof}

 Now, we give the proof of  Theorem \ref{thm1-1}.
\begin{proof}[Proof of Theorem \ref{thm1-1}] Clearly,
  \eqref{3-3} gives $E(0)=0$. If $f(x,u)$ satisfies assumptions $(f_{1})$-$(f_{3})$, it follows from Proposition \ref{prop3-1}, Proposition \ref{prop3-2}, Proposition \ref{prop3-3} and  Mountain Pass Theorem (Proposition \ref{prop2-11}) that the functional $E$ has a positive critical value, which implies the semilinear subelliptic Dirichlet problem \eqref{problem3-1} has a nontrivial weak solution in $H_{X,0}^{1}(\Omega)$.

If $f(x,u)$ satisfies the assumptions $(f_2)$-$(f_4)$, by Proposition \ref{prop3-1}, Proposition \ref{prop3-2}, Proposition \ref{prop3-4} and Symmetrical Mountain Pass Theorem (Proposition \ref{prop2-12}), we know the functional $E$ has an unbounded sequence of critical values $\{E(u_{k})\}_{k=1}^{\infty}$. Then, from \eqref{3-3}, \eqref{3-28} and \eqref{2-5}, we can deduce that there exist some positive constant $C$ such that
\begin{equation}\label{3-34}
 |E(u)|\leq C\left(\|u\|_{H_{X,0}^{1}(\Omega)}+\|u\|_{H_{X,0}^{1}(\Omega)}^{2}+\|u\|_{H_{X,0}^{1}(\Omega)}^{p} \right)~~\mbox{for all}~~u\in H_{X,0}^{1}(\Omega).
\end{equation}
That means problem \eqref{problem3-1} admits an unbounded sequence $\{u_k\}_{k=1}^{\infty}$ of weak solutions in $H_{X,0}^1(\Omega)$.\par
\end{proof}

\section{Multiplicity of weak solutions with perturbation term}
\label{Section4}
   Now, we study the problem \eqref{problem1-1} with non-vanishing perturbation term $g$.

\begin{definition}
\label{def4-1}
For $g\in L^2(\Omega)$, we say that $u\in H_{X,0}^{1}(\Omega)$ is a weak solution of \eqref{problem1-1} if
\begin{equation}
\label{4-1}
  \int_{\Omega}Xu\cdot Xvdx-\int_{\Omega}f(x,u)vdx-\int_{\Omega}g(x)vdx=0,~~~\mbox{for all}~v\in H_{X,0}^{1}(\Omega).
\end{equation}
\end{definition}

Then, we consider the following energy functional $E_{1}: H_{X,0}^{1}(\Omega)\to \mathbb{R}$, defined as
\begin{equation}\label{4-2}
  E_{1}(u)=\frac{1}{2}\int_{\Omega}{{|{Xu}|}^2} dx-\int_{\Omega}F(x,u) dx-\int_{\Omega}gudx.
\end{equation}
 By the similar arguments of Proposition \ref{prop3-1},   we have
\begin{proposition}
\label{prop4-1}
If $f(x,u)$ satisfies the assumption $(f_{2})$ and $g\in L^2(\Omega)$, then functional
  \[ E_{1}(u)=\frac{1}{2}\int_{\Omega}{{|{Xu}|}^2} dx-\int_{\Omega}F(x,u) dx-\int_{\Omega}gudx \]
belongs to $C^{1}(H_{X,0}^{1}(\Omega), \mathbb{R})$. Thus the semilinear equation \eqref{problem1-1} is the Euler-Lagrange equation of the variational problem for the energy functional \eqref{4-2}. Furthermore, the Fr\'{e}chet derivative of $E_{1}$ at $u$ is given by
\begin{equation}\label{4-3}
  \langle DE_{1}(u),v\rangle=\int_{\Omega}Xu\cdot Xvdx-\int_{\Omega}f(x,u)vdx-\int_{\Omega}gvdx,~~\mbox{for all}~~v\in H_{X,0}^{1}(\Omega).
\end{equation}
Therefore, the critical point of $E_{1}$ in $H_{X,0}^{1}(\Omega)$ is the weak solution to \eqref{problem1-1}.
\end{proposition}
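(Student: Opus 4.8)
The plan is to reduce the claim to Proposition~\ref{prop3-1} by isolating the new linear term. Write $E_{1}(u)=E(u)-\Phi(u)$, where $E(u)=\frac12\int_{\Omega}|Xu|^{2}\,dx-\int_{\Omega}F(x,u)\,dx$ is the functional treated in Proposition~\ref{prop3-1} and $\Phi(u):=\int_{\Omega}g u\,dx$. Since $f$ satisfies $(f_{2})$, Proposition~\ref{prop3-1} already gives $E\in C^{1}(H_{X,0}^{1}(\Omega),\mathbb{R})$ with $\langle DE(u),v\rangle=\int_{\Omega}Xu\cdot Xv\,dx-\int_{\Omega}f(x,u)v\,dx$, so it remains only to handle $\Phi$.

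First I would check that $\Phi$ is a bounded linear functional on $H_{X,0}^{1}(\Omega)$. Linearity is clear; for boundedness, the Cauchy--Schwarz inequality together with the definition of the $H_{X,0}^{1}$-norm yields $|\Phi(u)|=\big|\int_{\Omega}g u\,dx\big|\le \|g\|_{L^{2}(\Omega)}\|u\|_{L^{2}(\Omega)}\le \|g\|_{L^{2}(\Omega)}\|u\|_{H_{X,0}^{1}(\Omega)}$, where $g\in L^{2}(\Omega)$ is used; alternatively one may invoke the bounded embedding $H_{X,0}^{1}(\Omega)\hookrightarrow L^{2}(\Omega)$ from Corollary~\ref{corollary2-2}. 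A bounded linear functional is of class $C^{1}$ (indeed $C^{\infty}$) on the whole space, with constant Fr\'{e}chet derivative $\langle D\Phi(u),v\rangle=\int_{\Omega}g v\,dx$ for every $u,v\in H_{X,0}^{1}(\Omega)$; in particular $D\Phi\equiv\Phi\in H_{X}^{-1}(\Omega)$ is trivially continuous in $u$.

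Combining the two pieces, $E_{1}=E-\Phi\in C^{1}(H_{X,0}^{1}(\Omega),\mathbb{R})$, and by linearity of the Fr\'{e}chet derivative $\langle DE_{1}(u),v\rangle=\langle DE(u),v\rangle-\langle D\Phi(u),v\rangle=\int_{\Omega}Xu\cdot Xv\,dx-\int_{\Omega}f(x,u)v\,dx-\int_{\Omega}g v\,dx$ for all $v\in H_{X,0}^{1}(\Omega)$, which is exactly \eqref{4-3}. Finally, if $u$ is a critical point of $E_{1}$, then $\langle DE_{1}(u),v\rangle=0$ for every $v\in H_{X,0}^{1}(\Omega)$, which is precisely the identity \eqref{4-1} defining a weak solution of \eqref{problem1-1}, and conversely. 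Since the argument is a direct transcription of Proposition~\ref{prop3-1} plus an elementary estimate, there is no genuine obstacle: the only point deserving a word of care is the well-definedness and $H_{X}^{-1}$-continuity of the perturbation term, which is immediate from $g\in L^{2}(\Omega)$ and the embedding $H_{X,0}^{1}(\Omega)\hookrightarrow L^{2}(\Omega)$.
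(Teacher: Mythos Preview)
Your proof is correct and follows essentially the same approach as the paper: both decompose $E_{1}=E-\Phi$ (the paper writes $I_{1}$ for your $\Phi$), invoke Proposition~\ref{prop3-1} for $E$, and observe that the linear functional $u\mapsto\int_{\Omega}gu\,dx$ has continuous derivative $v\mapsto\int_{\Omega}gv\,dx$. Your write-up is slightly more detailed in justifying the boundedness of $\Phi$ and the identification of the critical-point condition with \eqref{4-1}, but the argument is the same.
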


\begin{proof}
 Proposition \ref{prop3-1} indicates that $E\in C^{1}(H_{X,0}^{1}(\Omega), \mathbb{R})$. Observe that
\begin{equation}\label{4-4}
 E_{1}(u)=E(u)-\int_{\Omega}gudx~~\mbox{for all}~~u\in H_{X,0}^{1}(\Omega),
\end{equation}
and the functional $I_{1}(u)=\int_{\Omega}gudx$ admits the continuous G\^{a}teaux derivative
\begin{equation}\label{4-5}
  \langle I_{1}'(u),v\rangle=\int_{\Omega}gvdx~~\mbox{for all}~~ v\in H_{X,0}^{1}(\Omega).
\end{equation}
 Therefore,  $E_{1}\in C^{1}(H_{X,0}^{1}(\Omega), \mathbb{R})$.
\end{proof}

Inspired by Rabinowitz's approach in \cite{Rabinowitz1986}, we shall then construct a new functional $E_2$ which is a modification of $E_{1}$ such that large critical values and points of $E_2$ are critical values and points of $E_1$. Then, the conclusion of Theorem \ref{thm1-2} follows if we prove that $E_{2}$ has an unbounded sequence of critical values.\par

Firstly, from \eqref{3-24} and \eqref{3-25}, the assumption $(f_{3})$ implies there exist constants $a_2, a_3>0$ such that
\begin{equation}\label{4-6}
F(x,u)\geq a_3|u|^q-a_2~~~\mbox{for all}~~u\in \mathbb{R},~ x\in \overline{\Omega}.
\end{equation}
Hence, there is a constant $a_1>0$, such that
\begin{equation}\label{4-7}
\frac{1}{q}\left(uf(x,u)+a_1\right)\geq F(x,u)+a_2\geq a_3|u|^q~~~\mbox{for all}~~u\in \mathbb{R},~x\in \overline{\Omega}.
\end{equation}

\begin{proposition}
\label{prop4-2}
Under the hypotheses of Theorem \ref{thm1-2}, there is a positive constant $A$ depending on $\|g\|_{L^2(\Omega)}$ such that if $u$ is a critical point if $E_1$, then
\begin{equation}
\label{4-8}
\int_{\Omega}(F(x,u)+a_2)dx\leq A(E_1(u)^2+1)^{\frac{1}{2}}.
\end{equation}
\end{proposition}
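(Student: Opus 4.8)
The plan is to exploit the Euler--Lagrange identity satisfied by a critical point in order to trade the Dirichlet energy $\int_\Omega|Xu|^2\,dx$ for $\int_\Omega f(x,u)u\,dx$, then invoke the super-quadraticity estimate \eqref{4-7} coming from $(f_{3})$, and finally absorb the linear perturbation term by Young's inequality, which is possible because $q>2$.

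First I would test the equation $DE_1(u)=0$ against $u$ itself: since $u\in H_{X,0}^1(\Omega)$, formula \eqref{4-3} gives
\[\int_\Omega|Xu|^2\,dx=\int_\Omega f(x,u)u\,dx+\int_\Omega gu\,dx,\]
and substituting this into \eqref{4-2} eliminates the Dirichlet term:
\[E_1(u)=\frac12\int_\Omega f(x,u)u\,dx-\int_\Omega F(x,u)\,dx-\frac12\int_\Omega gu\,dx.\]
From \eqref{4-7} one has $\tfrac12 f(x,u)u\ge\tfrac q2\bigl(F(x,u)+a_2\bigr)-\tfrac{a_1}{2}$ pointwise, so, writing $T:=\int_\Omega\bigl(F(x,u)+a_2\bigr)\,dx$ and integrating, one obtains
\[E_1(u)+\frac12\int_\Omega gu\,dx\ \ge\ \frac{q-2}{2}\,T-C_0,\]
where $C_0$ is a fixed constant depending only on $a_1,a_2,|\Omega|$. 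Note also that \eqref{4-7} yields $T\ge a_3\|u\|_{L^q(\Omega)}^q\ge0$.

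The remaining ingredient is to control $\int_\Omega gu\,dx$ by a sublinear power of $T$. By Cauchy--Schwarz, then H\"older's inequality on the bounded domain $\Omega$ (here $q>2$ is used), and then $T\ge a_3\|u\|_{L^q(\Omega)}^q$,
\[\Bigl|\int_\Omega gu\,dx\Bigr|\ \le\ \|g\|_{L^2(\Omega)}\,\|u\|_{L^2(\Omega)}\ \le\ C_2\,\|g\|_{L^2(\Omega)}\,T^{1/q},\]
with $C_2$ depending only on $|\Omega|,q,a_3$. Since $1/q<1$, Young's inequality gives $\tfrac{C_2}{2}\|g\|_{L^2(\Omega)}T^{1/q}\le\tfrac{q-2}{4}T+C_3$ for a constant $C_3=C_3(\|g\|_{L^2(\Omega)})$ that does not depend on $u$. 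Combining the last three displays and absorbing the $\tfrac{q-2}{4}T$ term leaves $\tfrac{q-2}{4}\,T\le E_1(u)+C_3+|C_0|$; since $T\ge0$ this also forces $E_1(u)\ge-(C_3+|C_0|)$, hence $T\le\frac{4}{q-2}\bigl(|E_1(u)|+C_3+|C_0|\bigr)$. Finally $|E_1(u)|+C\le\sqrt{2}\,(C+1)\,(E_1(u)^2+1)^{1/2}$ turns this into \eqref{4-8} with $A$ depending only on $q$, $a_1,a_2,a_3$, $|\Omega|$ and $\|g\|_{L^2(\Omega)}$.

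I do not expect a genuine obstacle here; the only point requiring care is that $A$ must not depend on the critical point $u$, which is guaranteed because every application of Young's inequality above produces a $u$-independent constant (the exponent $1/q$ being strictly less than $1$). Note in passing that neither condition (A) nor the growth bound $(f_{2})$ is used in this argument --- only $(f_{3})$ (through \eqref{4-6}--\eqref{4-7}) and $g\in L^2(\Omega)$.
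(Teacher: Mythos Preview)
Your argument is correct and is essentially the same as the paper's: you use $E_1(u)=E_1(u)-\tfrac12\langle DE_1(u),u\rangle$, apply \eqref{4-7} to bound $\tfrac12 f(x,u)u-F(x,u)$ from below by a positive multiple of $F(x,u)+a_2$, control the $g$-term via $\|u\|_{L^2}\lesssim\|u\|_{L^q}\lesssim T^{1/q}$, and absorb by Young's inequality. The paper's write-up is more compressed (it hides the Young step inside a one-line chain of inequalities) but the ideas and the order of the steps coincide.
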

\begin{proof}
Suppose $u$ is a critical point of $E_1$. Then by \eqref{4-7} and simple estimates,
\begin{equation}
\label{4-9}
\begin{aligned}
&E_1(u)=E_1(u)-\frac{1}{2}\langle DE_1(u), u\rangle=\int_{\Omega}\left(\frac{1}{2}uf(x,u)-F(x,u)-\frac{1}{2}gu\right)dx\\
&\geq \left(\frac{1}{2}-\frac{1}{q}\right)\int_{\Omega}(uf(x,u)+a_1)dx-\frac{1}{2}\|g\|_{L^2(\Omega)}\|u\|_{L^2(\Omega)}-a_4\\
&\geq a_5 \int_{\Omega}(F(x,u)+a_2)dx-a_6\|u\|_{L^q(\Omega)}-a_4
\geq \frac{a_5}{2}\int_{\Omega}(F(x,u)+a_2)dx-a_7
\end{aligned}
\end{equation}
and \eqref{4-8} follows immediately from \eqref{4-9}.
\end{proof}

Suppose $\chi \in C^\infty(\mathbb{R}, \mathbb{R})$ such that $\chi(\xi)\equiv 1$ for $\xi\leq 1$, $\chi(\xi)\equiv 0$ for $\xi\geq 2$, and $\chi'(\xi)\in (-2,0)$ for $\xi \in (1,2)$. Let
\[Q(u)=2A(E_1(u)^2+1)^{\frac{1}{2}}\]
and
\[\psi(u)=\chi\left(Q(u)^{-1}\int_{\Omega}(F(x,u)+a_2)dx \right).\]
Note that by \eqref{4-8}, if $u$ is a critical point of $E_1$,  $Q(u)^{-1}\int_{\Omega}(F(x,u)+a_2)dx$ lies in $[0,\frac{1}{2}]$ and then $\psi(u)=1$. Finally, we let
\begin{equation}
\label{4-10}
E_2(u)=\int_{\Omega}\left(\frac{1}{2}{|{Xu}|}^2-F(x,u)-\psi(u)gu\right)dx.
\end{equation}
Then $E_2(u)=E_1(u)$ if $u$ is a critical point of $E_{1}$.\par

The following proposition contains the main technical properties of $E_{2}$ which we need.

\begin{proposition}
\label{prop4-3}
Under the hypotheses of Theorem \ref{thm1-2},  we can obtain that
\begin{enumerate}
  \item [(1)] $E_2 \in C^1(H_{X,0}^1(\Omega), \mathbb{R})$.
  \item [(2)] There exists a positive constant $B_1$ depending on $\|g\|_{L^2(\Omega)}$ such that
  \begin{equation}
  \label{4-11}
  |E_2(u)-E_2(-u)|\leq B_1(|E_2(u)|^{\frac{1}{q}}+1)~~~~\mbox{for all}~~u\in H_{X,0}^1(\Omega).
  \end{equation}
  \item [(3)] There is a constant $M_0>0$ such that if $E_2(u)\geq M_0$ and $DE_2(u)=0$, then $E_2(u)=E_1(u)$ and $DE_1(u)=0$.
  \item [(4)] There is a constant $M_1\geq M_0$ such that $E_2$ satisfies $(PS)$ condition on $\widehat{A}_{M_{1}}=\{u\in H_{X,0}^{1}(\Omega)|E_{2}(u)\geq M_{1}\}$.
\end{enumerate}
\end{proposition}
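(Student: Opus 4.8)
The plan is to establish the four assertions of Proposition \ref{prop4-3} in order, each reducing to a perturbation estimate on the lower-order term $\psi(u)\int_\Omega gu\,dx$ added to the functional $E$ from Section \ref{Section3}.

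\textbf{Part (1).} I would first record that $Q(u)=2A(E_1(u)^2+1)^{1/2}$ is $C^1$ on $H_{X,0}^1(\Omega)$ and bounded below by $2A>0$, since $E_1\in C^1$ by Proposition \ref{prop4-1} and $\xi\mapsto 2A(\xi^2+1)^{1/2}$ is smooth. The map $u\mapsto\int_\Omega(F(x,u)+a_2)\,dx$ is $C^1$ on $H_{X,0}^1(\Omega)$ by assumption $(f_2)$ together with \eqref{2-5} and Proposition \ref{prop2-9} (the constant $a_2$ only shifts by $a_2|\Omega|$). Hence the composition $\psi(u)=\chi(Q(u)^{-1}\int_\Omega(F(x,u)+a_2)\,dx)$ is $C^1$ because $\chi\in C^\infty$ and $Q(u)^{-1}$ is $C^1$ (quotient of $C^1$ functions with nonvanishing denominator). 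Finally $u\mapsto\int_\Omega gu\,dx$ is a bounded linear functional (Cauchy--Schwarz and \eqref{2-5}), so $\psi(u)\int_\Omega gu\,dx$ is a product of $C^1$ functions, and $E_2(u)=E(u)-\psi(u)\int_\Omega gu\,dx$ lies in $C^1(H_{X,0}^1(\Omega),\mathbb{R})$. I would also write the derivative explicitly: $\langle DE_2(u),v\rangle=\int_\Omega(Xu\cdot Xv-f(x,u)v-\psi(u)gv)\,dx-\langle D\psi(u),v\rangle\int_\Omega gu\,dx$, which will be needed for (3) and (4).

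\textbf{Part (2).} Since $F$ is even in $u$ (from $(f_4)$) and $|Xu|^2$ is even, $E(u)=E(-u)$; likewise $Q(u)=Q(-u)$ and $\int_\Omega(F(x,u)+a_2)\,dx=\int_\Omega(F(x,-u)+a_2)\,dx$, so $\psi(u)=\psi(-u)$. Therefore $E_2(u)-E_2(-u)=-\psi(u)\int_\Omega gu\,dx+\psi(-u)\int_\Omega g(-u)\,dx=-2\psi(u)\int_\Omega gu\,dx$, which gives $|E_2(u)-E_2(-u)|\le 2|\psi(u)|\,|\int_\Omega gu\,dx|\le 2\|g\|_{L^2(\Omega)}\|u\|_{L^2(\Omega)}$ when $\psi(u)\ne 0$. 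The key is that $\psi(u)\ne 0$ forces $\int_\Omega(F(x,u)+a_2)\,dx\le 2Q(u)=4A(E_1(u)^2+1)^{1/2}$, and combining this with \eqref{4-6}, namely $\int_\Omega(F(x,u)+a_2)\,dx\ge a_3\|u\|_{L^q(\Omega)}^q\ge a_3|\Omega|^{1-q/2}\|u\|_{L^2(\Omega)}^q$ (H\"older on the finite-measure domain, using $q>2$), one controls $\|u\|_{L^2(\Omega)}$ by a constant times $(E_1(u)^2+1)^{1/(2q)}$. It then remains to pass from $E_1(u)$ to $E_2(u)$: on the support of $\psi(u)$ we have $|E_1(u)-E_2(u)|=|(\psi(u)-1)\int_\Omega gu\,dx|\le\|g\|_{L^2}\|u\|_{L^2}$, so $E_1(u)^2\le C(E_2(u)^2+\|u\|_{L^2}^2)$; feeding this back and absorbing the $\|u\|_{L^2}$ terms by Young's inequality (using $q>2$ so the $\|u\|_{L^2}^q$ growth dominates $\|u\|_{L^2}^2$) yields $\|u\|_{L^2(\Omega)}\le B_1'(|E_2(u)|^{1/q}+1)$, hence \eqref{4-11} with a suitable $B_1$.

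\textbf{Parts (3) and (4).} For (3): if $DE_2(u)=0$ and $E_2(u)$ is large, I want to show $\psi(u)\equiv 1$ near $u$, so that $E_2\equiv E_1$ in a neighborhood and $DE_1(u)=DE_2(u)=0$. This is where the condition (A) (equivalently the inequality in the second Remark after Theorem \ref{thm1-2}) enters. The standard Rabinowitz argument tests $DE_2(u)$ against $u$ to get an identity of the form $\theta E_2(u)\le(\tfrac12-\tfrac1q+\text{error})\int_\Omega(uf(x,u)+a_1)\,dx+(\text{terms involving }\langle D\psi(u),u\rangle\int_\Omega gu\,dx)+C$; the dangerous term is $\langle D\psi(u),u\rangle\int_\Omega gu\,dx$, which is supported on $\{1\le Q(u)^{-1}\int_\Omega(F(x,u)+a_2)\,dx\le 2\}$ where $\chi'\in(-2,0)$, and one estimates $|\langle D\psi(u),u\rangle|$ using $|\chi'|\le 2$, the formula for $DQ(u)$, and the growth bounds $(f_2),(f_3)$; condition (A) is exactly what makes this error term lower order than $\int_\Omega(uf(x,u)+a_1)\,dx$, so for $E_2(u)\ge M_0$ large the inequality forces $\int_\Omega(F(x,u)+a_2)\,dx\le\tfrac12 Q(u)$, i.e. we are in the region $\psi\equiv 1$. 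For (4): on $\widehat A_{M_1}$ with $M_1$ large enough, a (PS) sequence stays in a region where the above a priori control applies; the coercivity estimate (in the spirit of \eqref{3-19}, now with the extra but controlled $\psi$ and $g$ terms) bounds the sequence in $H_{X,0}^1(\Omega)$, and then the decomposition $DE_2=L+\tilde K$ — with $L$ the same invertible map as in Proposition \ref{prop3-2} and $\tilde K$ the sum of the compact operator $K$ and the (finite-rank-plus-compact, hence compact) perturbation coming from $\psi$ and $g$ — together with Proposition \ref{prop2-10} gives the (PS) condition on $\widehat A_{M_1}$. I expect the main obstacle to be Part (3): carefully bounding $\langle D\psi(u),u\rangle\int_\Omega gu\,dx$ and verifying that condition (A) makes it subordinate to the coercive term $\int_\Omega(uf(x,u)+a_1)\,dx\gtrsim\|u\|_{L^q}^q$ — this is the one place where the exact exponents, and the interplay of $p$, $q$ and $\tilde\nu$, genuinely matter.
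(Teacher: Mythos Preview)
Your Part (2) contains a genuine error: you claim $Q(u)=Q(-u)$ and hence $\psi(u)=\psi(-u)$, but this is false. The functional $E_1(u)=E(u)-\int_\Omega gu\,dx$ is \emph{not} even in $u$ (precisely because of the perturbation term), so $E_1(u)^2\ne E_1(-u)^2$ in general and $Q(u)\ne Q(-u)$. The paper does not rely on any symmetry of $\psi$; instead it writes $|E_2(u)-E_2(-u)|\le (\psi(u)+\psi(-u))\,|\int_\Omega gu\,dx|$ and then bounds each term separately. For $u\in\text{supp}\,\psi$ one gets, as you do, $\psi(u)|\int_\Omega gu\,dx|\le \alpha_1(|E_1(u)|^{1/q}+1)$, and then one passes from $|E_1(u)|$ to $|E_2(u)|$ via \eqref{4-12} and Young's inequality, obtaining $\psi(u)|\int_\Omega gu\,dx|\le \alpha_6(|E_2(u)|^{1/q}+1)$. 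The $\psi(-u)$ term is handled by the \emph{same} argument applied at $-u$, giving a bound in terms of $|E_2(-u)|^{1/q}$, which one then converts back to $|E_2(u)|^{1/q}$ using the estimate just obtained. Your shortcut via $\psi(u)=\psi(-u)$ collapses, and with it the clean formula $E_2(u)-E_2(-u)=-2\psi(u)\int_\Omega gu\,dx$.

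Your Part (3) also misidentifies the mechanism. Condition (A) is \emph{not} used anywhere in Proposition \ref{prop4-3}; it enters only later, in Propositions \ref{prop4-4} and \ref{prop4-6}, to make the series $\sum b_k^{(1-q)/q}$ converge. In the paper's proof of (3) one computes $DE_2(u)$ explicitly and isolates two scalar coefficients $T_1(u),T_2(u)$ (built from $\chi'(\theta(u))$, $Q(u)^{-1}$ and $\int_\Omega gu\,dx$) so that $\langle DE_2(u),v\rangle=(1+T_1(u))\int Xu\cdot Xv-(1+T_2(u))\int f(x,u)v-(\psi(u)+T_1(u))\int gv$. The crucial estimate is $|T_i(u)|\le C(|E_1(u)|^{1/q}+1)|E_1(u)|^{-1}$, which uses only \eqref{4-13} and $q>1$; combined with the lower bound $E_1(u)\to\infty$ as $M_0\to\infty$ (from \eqref{4-12} and \eqref{4-13}), this forces $T_1,T_2$ small. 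Then a variant of the Proposition \ref{prop4-2} computation (test $DE_2(u)=0$ against $u$) gives $\int_\Omega(F(x,u)+a_2)\,dx<Q(u)$, i.e.\ $\theta(u)<1$, so $\psi\equiv 1$ near $u$. No interplay of $p,q,\tilde\nu$ is needed here; your plan to control $\langle D\psi(u),u\rangle\int gu\,dx$ by comparison with $\|u\|_{L^q}^q$ via condition (A) is the wrong route. Part (4) likewise hinges on the same smallness of $T_1,T_2$ (to run the coercivity estimate and to extract a convergent subsequence from the decomposition $DE_2(u)=(1+T_1(u))L(u)+(1+T_2(u))K(u)+(\psi(u)+T_1(u))T_g$), not on condition (A).
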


\begin{proof}
From \eqref{4-4} and \eqref{4-10}, we have
\begin{equation}\label{4-12}
 E_{2}(u)=E(u)-\psi(u)\int_{\Omega}gudx=E_{1}(u)+(1-\psi(u))\int_{\Omega}gudx~~\mbox{for all}~~u\in H_{X,0}^{1}(\Omega).
\end{equation}
Since $\chi$ is smooth, then $\psi\in C^1(H_{X,0}^1(\Omega), \mathbb{R})$ and therefore $E_2\in C^1(H_{X,0}^1(\Omega), \mathbb{R})$.

To prove \eqref{4-11}, for $u\in \mbox{supp}~ \psi$, we first show that there exists a constant  $\alpha_1$ depending on $\|g\|_{L^2(\Omega)}$ such that
\begin{equation}
\label{4-13}
\left|\int_{\Omega}gu dx\right|\leq \alpha_1(|E_1(u)|^{\frac{1}{q}}+1).
\end{equation}
Indeed, the H\"{o}lder's inequality and \eqref{4-7} yield that
\begin{equation}\label{4-14}
\begin{aligned}
\left|\int_{\Omega}gu dx\right|\leq \|g\|_{L^2(\Omega)}\|u\|_{L^2(\Omega)}\leq \alpha_2\|u\|_{L^q(\Omega)}
\leq \alpha_3\left(\int_{\Omega} (F(x,u)+a_2) dx\right)^{\frac{1}{q}}.
\end{aligned}
\end{equation}
Moreover, for $u\in \text{supp}~ \psi$, by the definition of $\psi(u)$, we get
\begin{equation}
\label{4-15}
\int_{\Omega} (F(x,u)+a_2) dx\leq 4A(E_1(u)^2+1)^{\frac{1}{2}}\leq \alpha_4(|E_1(u)|+1).
\end{equation}
Thus, \eqref{4-13} follows from \eqref{4-14} and \eqref{4-15}. Then, to obtain \eqref{4-11},  we can deduce from \eqref{4-10} and assumption $(f_4)$ that
\begin{equation}
\label{4-16}
|E_2(u)-E_2(-u)|\leq (\psi(u)+\psi(-u))\left|\int_{\Omega}gu dx\right|.
\end{equation}
Next we estimate the right-hand side of \eqref{4-16}. By \eqref{4-12},
\begin{equation}
\label{4-17}
|E_1(u)|\leq |E_2(u)|+\left|\int_{\Omega}gu dx\right|.
\end{equation}
Besides, \eqref{4-13} implies that, for any $u\in H_{X,0}^{1}(\Omega)$,
\begin{equation}\label{4-18}
\psi(u)\left|\int_{\Omega}gu dx\right|\leq \alpha_1\psi(u)(|E_1(u)|^{\frac{1}{q}}+1).
\end{equation}
Combining \eqref{4-17} and \eqref{4-18}, we can obtain
\begin{equation}\label{4-19}
\psi(u)\left|\int_{\Omega}gu dx\right|\leq \alpha_5\psi(u)\left(|E_2(u)|^{\frac{1}{q}}+\left|\int_{\Omega}gu dx\right|^{\frac{1}{q}}+1\right).
\end{equation}
Hence, by using Young's inequality, the $|\int_{\Omega}gudx|^{\frac{1}{q}}$ term on the right-hand side in \eqref{4-19} can be absorbed into the left-hand side leading
\begin{equation}\label{4-20}
\psi(u)\left|\int_{\Omega}gu dx\right|\leq \alpha_6(|E_2(u)|^{\frac{1}{q}}+1).
\end{equation}
The $\psi(-u)$ term of \eqref{4-16} can also be estimated by analogous argument above and thus \eqref{4-11} follows.

To prove conclusion $(3)$, it suffices to show that if $M_0$ is large and $u$ is a critical point of $E_2$ with $E_2(u)\geq M_0$, then
\begin{equation}\label{4-21}
Q(u)^{-1}\int_{\Omega}(F(x,u)+a_2) dx<1.
\end{equation}
The definition of $\psi$  and \eqref{4-21} imply $\psi\equiv1$ in a neighborhood of $u$. Hence, $D\psi(u)=0$ and then $E_2(u)=E_1(u)$, $DE_2(u)=DE_1(u)$, which yields conclusion $(3)$ .\par

We will show that  \eqref{4-21} holds. From  \eqref{4-10}, for any $u,v\in H_{X,0}^1(\Omega)$,
\begin{equation}\label{4-22}
\langle DE_2(u), v\rangle=\int_{\Omega}\left(Xu\cdot Xv-f(x,u)v-\langle D\psi(u), v\rangle gu-\psi(u)gv\right) dx,
\end{equation}
where
\begin{equation}\label{4-23}
\langle D\psi (u), v\rangle=\chi'(\theta(u))Q(u)^{-2}\left(Q(u)\int_{\Omega}f(x,u)v dx-(2A)^2\theta(u)E_1(u)\langle DE_1(u), v\rangle\right)
\end{equation}
and
\begin{equation}\label{4-24}
\theta(u)=Q(u)^{-1}\int_{\Omega}(F(x,u)+a_2) dx.
\end{equation}
Let
\begin{equation}\label{4-25}
T_1(u)=\chi'(\theta(u))(2A)^2Q(u)^{-2}E_1(u)\theta(u)\int_{\Omega}gu dx,
\end{equation}
and
\begin{equation}\label{4-26}
T_2(u)=\chi'(\theta(u))Q(u)^{-1}\int_{\Omega}gu dx+T_1(u).
\end{equation}
Hence, it follows from \eqref{4-22}, \eqref{4-23}, \eqref{4-25} and \eqref{4-26} that
\begin{equation}\label{4-27}
\begin{aligned}
\langle DE_2(u), v\rangle=&(1+T_1(u))\int_{\Omega}Xu\cdot Xv dx\\
&-(1+T_2(u))\int_{\Omega}f(x,u)v dx-(\psi(u)+T_1(u))\int_{\Omega}gv dx.
\end{aligned}
\end{equation}

Suppose that $u\in H_{X,0}^{1}(\Omega)$ is a critical point of $E_{2}$. If $u\in \text{supp}~\psi$ with $\theta(u)<1$, then $T_1(u)=T_2(u)=0$ and $\psi(v)\equiv1 $ in a neighborhood of $u$. Hence, $u$ is also a critical point of $E_{1}$ and \eqref{4-8} gives \eqref{4-21}. For $u\notin \text{supp}~\psi$, we know that $\psi(v)\equiv 0 $ in a neighborhood of $u$ and $T_1(u)=T_2(u)=0$. Consider
\begin{equation}\label{4-28}
\begin{aligned}
E_{2}(u)&=E_2(u)-\frac{1}{2(1+T_1(u))}\langle DE_2(u), u\rangle
=\frac{1+T_2(u)}{2(1+T_1(u))}\int_{\Omega}f(x,u)udx\\&-\int_{\Omega}F(x,u)dx-\frac{\psi(u)+T_1(u)(2\psi(u)-1)}{2(1+T_1(u))}\int_{\Omega}gu dx.
\end{aligned}
\end{equation}
 Then, we can deduce from \eqref{4-12} and \eqref{4-28} that
\[ E_{1}(u)=E_{2}(u)-\int_{\Omega}gudx=\frac{1}{2}\int_{\Omega}f(x,u)udx-\int_{\Omega}F(x,u)dx-\int_{\Omega}gudx,\]
which also gives \eqref{4-21} by a similar estimate of \eqref{4-9}. In the case of $u\in \text{supp}~\psi$ with $1\leq \theta(u)\leq 2$, we first consider the situation that $T_1(u)$ and $T_2(u)$ are both small enough (e.g. $|T_{1}(u)|, |T_{2}(u)|\leq \frac{1}{2}$ and $ \frac{1+T_{2}(u)}{1+T_{1}(u)}>\frac{2}{q}$).
 Observe that  \eqref{4-12} gives
\begin{equation}\label{4-29}
\begin{aligned}
E_2(u)\leq |E_2(u)|\leq |E_1(u)|+(1-\psi(u))\left|\int_{\Omega}gudx\right|
\leq |E_1(u)|+\|g\|_{L^2(\Omega)}\|u\|_{L^2(\Omega)}.
\end{aligned}
\end{equation}
On the other hand, by \eqref{4-7} and \eqref{4-28}, we have
\begin{equation}\label{4-30}
\begin{aligned}
E_2(u)&\geq \left(\frac{1+T_2(u)}{2(1+T_1(u))}-\frac{1}{q}\right)\int_{\Omega}(uf(x,u)+a_1) dx-C(u)\|g\|_{L^2(\Omega)}\|u\|_{L^2(\Omega)}-a_8\\
&\geq \left(\frac{q(1+T_2(u))}{2(1+T_1(u))}-1\right)\int_{\Omega}(F(x,u)+a_2)dx-C(u)\|g\|_{L^2(\Omega)}\|u\|_{L^2(\Omega)}-a_8,
\end{aligned}
\end{equation}
where $$C(u)=\left|\frac{\psi(u)+T_1(u)(2\psi(u)-1)}{2(1+T_1(u))}\right|\leq \frac{3}{2}.$$
Combining \eqref{4-29} and \eqref{4-30}, we obtain
\begin{equation}\label{4-31}
\begin{aligned}
|E_1(u)|&\geq \left(\frac{q(1+T_2(u))}{2(1+T_1(u))}-1\right)\int_{\Omega}(F(x,u)+a_2)dx-
(C(u)+1)\|g\|_{L^2(\Omega)}\|u\|_{L^2(\Omega)}-a_8\\
&\geq \left(\frac{q(1+T_2(u))}{2(1+T_1(u))}-1\right)\int_{\Omega}(F(x,u)+a_2)dx-
\frac{5}{2}\|g\|_{L^2(\Omega)}\|u\|_{L^2(\Omega)}-a_8\\
&\geq \left(\frac{q(1+T_2(u))}{2(1+T_1(u))}-1\right)\int_{\Omega}(F(x,u)+a_2)dx-
C\|u\|_{L^q(\Omega)}-a_8.
\end{aligned}
\end{equation}
By \eqref{4-7} and Young's inequality, if $T_1(u)$ and $T_2(u)$ are both small enough such that $ \frac{1+T_{2}(u)}{1+T_{1}(u)}>\frac{2}{q}$,  we can also deduce \eqref{4-8} by similar approach of \eqref{4-9} with $A$ replaced by a larger constant which is smaller than $2A$. Thus \eqref{4-21} is also valid.

Therefore, it suffices to show that for $u\in \text{supp}~\psi$ with $1\leq \theta(u)\leq 2$, $T_1(u), T_2(u)\rightarrow 0$ as $M_0\rightarrow\infty$. By \eqref{4-13}, \eqref{4-25} and \eqref{4-26} (the definitions of $T_1$, $T_{2}$), we get
\begin{equation}\label{4-32}
\begin{split}
|T_1(u)|&\leq 4\alpha_1(|E_1(u)|^{\frac{1}{q}}+1)|E_1(u)|^{-1},\\
|T_2(u)|&\leq (4+A^{-1})\alpha_1(|E_1(u)|^{\frac{1}{q}}+1)|E_1(u)|^{-1}.
\end{split}
\end{equation}
From \eqref{4-12}, $E_1(u)+\left|\int_{\Omega}gu dx\right|\geq E_2(u)$.
Thus by \eqref{4-13}, one has
\begin{equation}\label{4-33}
E_1(u)+\alpha_1|E_1(u)|^{\frac{1}{q}}\geq E_2(u)-\alpha_1\geq \frac{M_0}{2},
\end{equation}
for $M_0$ large enough, e.g. $M_{0}\geq 2\alpha_{1}$. If $E_1(u)\leq 0$, \eqref{4-33} implies that
\begin{equation}\label{4-34}
\frac{\alpha_1^{\tilde{q}}}{\tilde{q}}+\frac{|E_{1}(u)|}{q}\geq \frac{M_0}{2}+|E_1(u)|,
\end{equation}
where $\frac{1}{\tilde{q}}+\frac{1}{q}=1$.
But if $M_0\geq \frac{2\alpha_1^{\tilde{q}}}{\tilde{q}}$, then \eqref{4-34} is invalid. Hence, we know that $E_1(u)>0$. Then \eqref{4-33} implies $E_1(u)\geq \frac{M_0}{4}$ or $E_1(u)\geq (\frac{M_0}{4\alpha_1})^q$. In both cases, we can  conclude that $E_1(u)\rightarrow\infty$ as $M_0\rightarrow\infty$. Then \eqref{4-32} implies $T_1(u), T_{2}(u)\rightarrow 0$ as $M_0\rightarrow\infty$.  Consequently, if we take  $M_{0}$ sufficiently large enough  such that $|T_{1}(u)|, |T_{2}(u)|\leq \frac{1}{2}$ and $ \frac{1+T_{2}(u)}{1+T_{1}(u)}>\overline{c}>\frac{2}{q}$ for some positive constant $\overline{c}$, then the conclusion $(3)$ follows.\par

Finally, we prove the conclusion $(4)$. It suffices to show that there exists $M_1> M_0$ such that if $\{u_m\}\subset H_{X,0}^1(\Omega)$ satisfies $M_1\leq E_2(u_m)\leq K$ and $DE_2(u_m)\rightarrow 0$, then $\{u_m\}$ is bounded in $H_{X,0}^1(\Omega)$ and has a convergent subsequence. For large $m$ (such that $\|DE_2(u_m)\|_{H_{X}^{-1}(\Omega)}<1$) and any $\rho>0$, it follows from \eqref{2-6}, \eqref{4-10} and \eqref{4-27} that
\begin{equation}\label{4-35}
\begin{aligned}
K+\rho\|u_m\|_{H_{X,0}^1(\Omega)}&\geq E_2(u_m)-\rho\langle DE_2(u_m), u_m\rangle\\
&\geq \frac{\lambda_1}{1+\lambda_1}\left(\frac{1}{2}-\rho(1+T_1(u_m))\right)\|u_m\|^2_{H_{X,0}^1(\Omega)}\\
&+\rho(1+T_2(u_m))\int_{\Omega}f(x,u_m)u_m dx-\int_{\Omega}F(x,u_m)dx\\
&+[\rho(\psi(u_m)+T_1(u_m))-\psi(u_m)]\int_{\Omega}gu_m dx.
\end{aligned}
\end{equation}
For $M_1>M_{0}$ sufficiently large and therefore $T_1,~T_2$ small (e.g. $|T_{1}(u_{m})|, |T_{2}(u_{m})|\leq \frac{1}{2}$ and $ \frac{1+T_{2}(u_m)}{1+T_{1}(u_m)}>\frac{2}{q}$), we can choose $\rho\in \left(\frac{1}{q},\frac{1}{2}\right)$ and $\varepsilon>0$ such that
\begin{equation}\label{4-36}
\frac{1}{2(1+T_1(u_m))}> \rho +\varepsilon > \rho-\varepsilon > \frac{1}{q(1+T_2(u_m))}.
\end{equation}
Then by \eqref{4-7}, \eqref{4-35}, \eqref{4-36} and assumption $(f_3)$,  we have
\begin{eqnarray}\label{4-37}
&&K+\rho\|u_m\|_{H_{X,0}^1(\Omega)}\nonumber\\
&&\geq
\frac{\varepsilon\lambda_1}{1+\lambda_1}(1+T_1(u_m))\|u_m\|^2_{H_{X,0}^1(\Omega)}+\left(\frac{1}{q}+\varepsilon(1+T_2(u_m))\right)\int_{|u_m|\geq R_0}f(x,u_m)u_m dx\nonumber\\
&&+\rho(1+T_2(u_m))\int_{|u_m|\leq R_0}f(x,u_m)u_m dx-\int_{\Omega}F(x,u_m)dx\nonumber\\
&&+[\rho(\psi(u_m)+T_1(u_m))-\psi(u_m)]\int_{\Omega}gu_m dx\nonumber\\
&&\geq \frac{\varepsilon\lambda_1}{2(1+\lambda_1)}\|u_m\|^2_{H_{X,0}^1(\Omega)}+
\left(\frac{1}{q}+\frac{\varepsilon}{2}\right)\int_{|u_m|\geq R_0}f(x,u_m)u_m dx\\
&&-\int_{|u_m|\geq R_0}F(x,u_m)dx-\alpha_2-\alpha_3\|u_m\|_{L^2(\Omega)}\nonumber\\
&&\geq \frac{\varepsilon\lambda_1}{2(1+\lambda_1)}\|u_m\|^2_{H_{X,0}^1(\Omega)}+\frac{\varepsilon q}{2}\int_{|u_m|\geq R_0}F(x,u_m)dx-\alpha_2-\alpha_3\|u_m\|_{L^2(\Omega)}\nonumber\\
&&\geq \frac{\varepsilon\lambda_1}{2(1+\lambda_1)}\|u_m\|^2_{H_{X,0}^1(\Omega)}-\alpha_3\|u_m\|_{H_{X,0}^{1}(\Omega)}
-\alpha_2,\nonumber
\end{eqnarray}
which yields that $\{u_m\}$ is bounded in $H_{X,0}^1(\Omega)$.

Our task now reduces to show that $\{u_m\}$ has a convergent subsequence in  $H_{X,0}^1(\Omega)$.
For any $g\in L^2(\Omega)$, we have
\begin{equation}\label{4-38}
\left|\int_{\Omega}gvdx\right|\leq \|g\|_{L^2(\Omega)} \|v\|_{L^2(\Omega)}\leq \|g\|_{L^2(\Omega)} \|v\|_{H_{X,0}^{1}(\Omega)},
\end{equation}
which implies the linear functional $T_{g}$ given by
\begin{equation}\label{4-39}
  \langle T_{g},v\rangle:=-\int_{\Omega}gvdx,~~\forall v\in H_{X,0}^{1}(\Omega)
\end{equation}
belongs to $H_{X}^{-1}(\Omega)$. Thus, it follows from \eqref{3-12}, \eqref{3-16}, \eqref{4-27} and \eqref{4-39} that the Fr\'{e}chet derivative of $E_{2}$ can be decomposed into
\begin{equation}\label{4-40}
 DE_{2}(u)=(1+T_1(u))L(u)+(1+T_2(u))K(u)+(\psi(u)+T_1(u))T_{g}
 \end{equation}
for all $u\in H_{X,0}^{1}(\Omega)$, where $L:{H_{X,0}^1}(\Omega)\mapsto {H_{X}^{-1}(\Omega)}$ is a bounded invertible linear map and the operators $K$ and $T_g$ map bounded sets in ${H_{X,0}^1}(\Omega)$ to relatively compact sets in $H_X^{-1}(\Omega)$.
Hence, \eqref{4-40} yields that
\begin{equation}\label{4-41}
L^{-1}DE_{2}(u_m)=(1+T_1(u_m))u_m+(1+T_2(u_m))L^{-1}K(u_m)+(\psi(u_m)+T_1(u_m))L^{-1}T_{g}.
\end{equation}
Furthermore, for sufficiently large $M_1$, the sequences $\{T_1(u_m)\}$, $\{T_2(u_m)\}$ and $\{\psi(u_m)\}$ are bounded.  Then there exists a subsequence $\{u_{m_k}\}\subset \{u_{m}\}$ such that
\begin{equation}\label{4-42}
\lim_{k\rightarrow\infty}T_1(u_{m_k})=c_1,~\lim_{k\rightarrow\infty}T_2(u_{m_k})=c_2,~\lim_{k\rightarrow\infty}\psi(u_{m_k})=c_3.
\end{equation}
Since  $\{u_{m_{k}}\}$ is also bounded in $H_{X,0}^1(\Omega)$, $L^{-1}K(u_{m_{k}})$ converges along a subsequence $\{u_{m_{k_{j}}}\}\subset \{u_{m_k}\}$. Therefore, by \eqref{4-41} and \eqref{4-42}, we obtain $\{u_{m_{k_j}}\}$ converges in $H_{X,0}^1(\Omega)$.

\end{proof}

From Proposition \ref{prop4-3} $(3)$, we know that, Theorem \ref{thm1-2} can be derived by showing that $E_2$ has an unbounded sequence of critical points. This goal will be obtained via several steps. First, we introduce a sequence of minimax values of $E_2$ and establish its lower bound estimate on the basis of Proposition \ref{prop2-5}.

For any finite dimensional subspace $W\subset H_{X,0}^1(\Omega)$, if $u\in W$ such that $\|u\|_{H_{X,0}^{1}(\Omega)}=\rho>0$, we let $v=\frac{u}{\rho}$. Then we obtain from assumption $(f_3)$ and \eqref{3-33} that
\begin{equation}\label{4-43}
\begin{aligned}
E_{2}(u)&=E_{2}(\rho v)=E(\rho v)-\rho\psi(\rho v)\int_{\Omega}gvdx\\
&=\frac{\rho^2}{2}\int_{\Omega}|Xv|^2dx-\int_{\Omega}F(x,\rho v)dx-\rho\psi(\rho v)\int_{\Omega}gvdx\\
&\leq\frac{\rho^2}{2}-c\cdot\rho^q\int_{|\rho v|\geq R_0}|v|^qdx+|\Omega|\cdot\sup\limits_{x\in{\overline{\Omega}},|w|\leq R_0}|F(x,w)|+\rho\|g\|_{L^2(\Omega)}\\
&\to -\infty, ~\mbox{as}~\rho\rightarrow {+\infty}.
\end{aligned}
\end{equation}
  The estimate \eqref{4-43}  indicates that for any finite dimensional subspace $W\subset H_{X,0}^{1}(\Omega)$, there is a constant $R=R(W)>0$ such that $E_{2}(u)\leq 0$ for $u \in W$ and $ \|u\|_{H_{X,0}^{1}(\Omega)}\geq R$. In particular, let $W_j=\text{span}\{\varphi_k|1\leq k\leq j\}$ and $W_j^{\perp}=\text{span}\{\varphi_k|k\geq j+1\}$ be the orthogonal complement of $W_j$ in $H_{X,0}^1(\Omega)$.
 We can choose an increasing sequence $\{R_j\}_{j=1}^{\infty}$ such that $R_j\geq \lambda_j^{\frac{r}{2(p-2)}}$ ($r=\tilde{\nu}(1-\frac{p}{2_{\tilde{\nu}}^*})$) and $E_2(u)\leq 0$ for all $u\in W_j$ with $\|u\|_{H_{X,0}^{1}(\Omega)}\geq R_j$.\par

Let $D_j=\overline{B_{R_j}}\cap W_j$ and
\[G_j=\{h\in C(D_j,H_{X,0}^{1}(\Omega))| ~h ~\mbox{is odd and} ~h=\mbox{\textbf{id} on }\partial B_{R_j}\cap W_j \},\]
where $B_{R}=\{u\in H_{X,0}^{1}(\Omega)|\|u\|_{H_{X,0}^{1}(\Omega)}<R\}$ and $\textbf{id}$ denotes the identity map. Clearly, $\textbf{id}\in G_{j}$. Define
\begin{equation}\label{4-44}
b_j=\inf\limits_{h\in G_j}\max\limits_{u\in D_j}E_2(h(u)),\quad j\in \mathbb{N}^{+}.
\end{equation}
Then, we have the following lower bound estimate for the minimax values $b_{j}$.

\begin{proposition}
\label{prop4-4}
There exist constants $C_{2}>0$ and $\tilde{k}\in \mathbb{N}$ such that
\begin{equation}\label{4-45}
  b_{k}\geq C_{2}\cdot k^{\frac{2p}{\tilde{\nu}(p-2)}-1}\qquad \mbox{for all}~~k\geq \tilde{k}.
\end{equation}
\end{proposition}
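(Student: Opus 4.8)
The plan is to combine a topological intersection property of the class $G_k$ with a quantitative lower estimate for $E_2$ on the spheres $\partial B_\rho\cap W_{k-1}^{\perp}$, and then to optimise the radius $\rho$ against the eigenvalue bound $\lambda_k\ge C_1k^{2/\tilde{\nu}}$ of Proposition \ref{prop2-5}. \emph{Step 1 (intersection).} First I would show that for every $h\in G_k$ and every $\rho\in(0,R_k)$ one has $h(D_k)\cap\partial B_\rho\cap W_{k-1}^{\perp}\neq\emptyset$. This is the usual Krasnoselskii genus / Borsuk--Ulam device of the perturbation-from-symmetry argument in \cite{Rabinowitz1986}: since $h$ is odd with $h(0)=0$ and $h\equiv\textbf{id}$ on $\partial B_{R_k}\cap W_k$, the set $O=\{u\in D_k:\|h(u)\|_{H_{X,0}^{1}(\Omega)}<\rho\}$ is a bounded symmetric open neighbourhood of $0$ in the $k$-dimensional space $W_k$ that does not meet $\partial B_{R_k}\cap W_k$ (here $\rho<R_k$ is used), so its boundary $\partial O$ has genus $k$ and carries $\|h(\cdot)\|_{H_{X,0}^{1}(\Omega)}\equiv\rho$; since $\dim W_{k-1}=k-1<k$, the odd continuous map $P_{W_{k-1}}\circ h$ (with $P_{W_{k-1}}$ the orthogonal projection onto $W_{k-1}$) must vanish at some $u^{\ast}\in\partial O$, and then $h(u^{\ast})\in\partial B_\rho\cap W_{k-1}^{\perp}$.

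\emph{Step 2 (lower bound for $E_2$).} Next I would estimate $E_2$ from below on $\partial B_\rho\cap W_{k-1}^{\perp}$, following the computation \eqref{3-29}--\eqref{3-32} with one addition: since $0\le\psi\le1$, the perturbation term satisfies $|\psi(u)\int_\Omega gu\,dx|\le\|g\|_{L^2(\Omega)}\|u\|_{L^2(\Omega)}$. For $u\perp\varphi_1,\dots,\varphi_{k-1}$ the Rayleigh--Ritz identity \eqref{3-30} gives $\|Xu\|_{L^2(\Omega)}^2\ge\lambda_k\|u\|_{L^2(\Omega)}^2$, hence $\|u\|_{L^2(\Omega)}\le\rho\lambda_k^{-1/2}$ and $\tfrac12\|Xu\|_{L^2(\Omega)}^2\ge\tfrac14\rho^2$ for $k$ large; assumption $(f_2)$ gives $|F(x,u)|\le C(|u|+|u|^p)$, and interpolating $L^p$ between $L^2$ and $L^{2_{\tilde{\nu}}^{*}}$ together with \eqref{2-5} yields $\int_\Omega|u|^p\,dx\le C\lambda_k^{-r/2}\rho^p$ with $r=\tilde{\nu}(1-p/2_{\tilde{\nu}}^{*})\in(0,p)$. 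Collecting the terms, $E_2(u)\ge\tfrac14\rho^2-C_3\lambda_k^{-r/2}\rho^p-C_4\lambda_k^{-1/2}\rho$ on $\partial B_\rho\cap W_{k-1}^{\perp}$ for all sufficiently large $k$, where $C_3,C_4>0$ depend only on $X,\Omega,p$ and $\|g\|_{L^2(\Omega)}$.

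\emph{Step 3 (optimisation).} Finally I would take $\rho=\rho_k:=c_0\lambda_k^{r/(2(p-2))}$ with $c_0\in(0,1)$ chosen small enough that $C_3c_0^{p-2}\le\tfrac18$; then $\rho_k<R_k$ because $R_k\ge\lambda_k^{r/(2(p-2))}$, the first two terms give at least $\tfrac{c_0^2}{8}\lambda_k^{r/(p-2)}$, and since $\tfrac{r}{2(p-2)}-\tfrac12<\tfrac{r}{p-2}$ the term $C_4\lambda_k^{-1/2}\rho_k$ is $o(\lambda_k^{r/(p-2)})$, hence for $k\ge\tilde{k}$ it is dominated by $\tfrac{c_0^2}{16}\lambda_k^{r/(p-2)}$, leaving $E_2(u)\ge\tfrac{c_0^2}{16}\lambda_k^{r/(p-2)}$ on $\partial B_{\rho_k}\cap W_{k-1}^{\perp}$. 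By Step 1, $\max_{u\in D_k}E_2(h(u))\ge\tfrac{c_0^2}{16}\lambda_k^{r/(p-2)}$ for every $h\in G_k$, so $b_k\ge\tfrac{c_0^2}{16}\lambda_k^{r/(p-2)}$; inserting $\lambda_k\ge C_1k^{2/\tilde{\nu}}$ and using the identity $\tfrac{2r}{\tilde{\nu}(p-2)}=\tfrac{2p}{\tilde{\nu}(p-2)}-1$ yields \eqref{4-45} with $C_2=\tfrac{c_0^2}{16}C_1^{r/(p-2)}$. The computational parts (Steps 2--3) are routine and mirror Proposition \ref{prop3-4}; the delicate point is Step 1, where one must keep the radius $\rho_k$ below $R_k$ so that the boundary normalisation built into $G_k$ can be invoked --- which is exactly why the sequence $\{R_j\}$ was arranged to satisfy $R_j\ge\lambda_j^{r/(2(p-2))}$.
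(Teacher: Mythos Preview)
Your proposal is correct and follows essentially the same route as the paper: the intersection property of $h(D_k)$ with $\partial B_\rho\cap W_{k-1}^{\perp}$ (the paper cites this as Lemma~1.44 in \cite{Rabinowitz1982}), the interpolation estimate $\|u\|_{L^p}^p\le C\lambda_k^{-r/2}\|u\|_{H_{X,0}^1}^p$ on $W_{k-1}^{\perp}$, and the optimisation $\rho_k=c_0\lambda_k^{r/(2(p-2))}$ followed by insertion of \eqref{2-9}. The only cosmetic difference is that the paper absorbs the linear and perturbation terms via Young's inequality into the $\|u\|_{L^2}^2$ term (see \eqref{4-47}--\eqref{4-48}), whereas you bound them directly by $C_4\lambda_k^{-1/2}\rho$ and check this is lower order; both are equivalent, and your explicit verification that $\rho_k<R_k$ (which the paper leaves implicit) is a nice touch.
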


\begin{proof}
For $h \in G_k$ and $\rho<R_k$, by the Intersection Theorem (see Lemma 1.44 in \cite{Rabinowitz1982}), we know that $h(D_k)\cap \partial B_{\rho}\cap W_{k-1}^{\bot}\neq\varnothing$. Then

\begin{equation}\label{4-46}
\begin{aligned}
\max\limits_{u\in D_k}E_2(h(u))&=\max\limits_{u\in h(D_k)}E_2(u)\geq \max\limits_{u\in h(D_k)\cap \partial B_{\rho}\cap W_{k-1}^{\bot}}E_2(u)\\
&\geq \inf\limits_{u\in h(D_k)\cap \partial B_{\rho}\cap W_{k-1}^{\bot} }E_2(u)\geq\inf\limits_{u\in\partial B_{\rho}\cap W_{k-1}^{\bot} }E_2(u).
\end{aligned}
\end{equation}
Moreover, for any $\delta>0$ and $u\in W_{k-1}^{\bot}=V_{k}$, we have $|gu|\leq \frac{1}{4\delta}u^{2}+\delta g^2$. Thus, for any $\varepsilon>0$, the estimate \eqref{3-29} indicates that
\begin{equation}\label{4-47}
\begin{aligned}
E_2(u)&=\frac{1}{2}\int_{\Omega}|Xu|^2dx-\int_{\Omega}F(x,u)dx-\psi(u)\int_{\Omega}gu dx\\
&\geq \frac{1}{2}\int_{\Omega}|Xu|^2dx-\int_{\Omega}F(x,u)dx-\left|\int_{\Omega}gu dx\right|\\
      &\geq \frac{\lambda_1}{2(1+\lambda_1)}\|u\|_{H_{X,0}^{1}(\Omega)}^2-C\|u\|_{L^2(\Omega)}^r\|u\|_{L^{2_{\tilde{\nu}}^*}(\Omega)}^{p-r}-\left(\frac{C}{4\varepsilon}+\frac{1}{4\delta}\right)\|u\|_{L^2(\Omega)}^2\\
      &-\left(C\varepsilon|\Omega|+\delta\|g\|_{L^2(\Omega)}^2\right),
\end{aligned}
\end{equation}
where $r$ is a positive constant such that $\frac{r}{2}+\frac{p-r}{2_{\tilde{\nu}}^*}=1$. We can choose $\varepsilon$ and $\delta$ such that $\frac{C}{\varepsilon}=\frac{\lambda_{1}}{2(1+\lambda_{1})}= \frac{1}{\delta}$.
It follows from \eqref{2-5}, \eqref{3-30} and \eqref{4-47} that, for $u\in W_{k-1}^{\bot}\cap\partial B_{\rho}$,
\begin{equation}\label{4-48}
\begin{aligned}
E_2(u)&\geq\frac{\lambda_1}{4(1+\lambda_1)}\|u\|_{H_{X,0}^{1}(\Omega)}^2-C\|u\|_{L^2(\Omega)}^r\|u\|_{L^{2_{\tilde{\nu}}^*}(\Omega)}^{p-r}-2C^{2}(1+\lambda_{1}^{-1})|\Omega|-2\|g\|^2_{L^2(\Omega)}(1+\lambda_{1}^{-1})\\
&\geq \frac{\lambda_1}{4(1+\lambda_1)}\|u\|_{H_{X,0}^{1}(\Omega)}^2-C_{1}\lambda_{k}^{-\frac{r}{2}}\|u\|_{H_{X,0}^{1}(\Omega)}^p-2C^{2}(1+\lambda_{1}^{-1})|\Omega|-2\|g\|^2_{L^2(\Omega)}(1+\lambda_{1}^{-1})\\
&=\left(\frac{\lambda_1}{4(1+\lambda_1)}-C_{1}\lambda_{k}^{-\frac{r}{2}}\rho^{p-2}   \right)\rho^2
-2C^{2}(1+\lambda_{1}^{-1})|\Omega|-2\|g\|^2_{L^2(\Omega)}(1+\lambda_{1}^{-1})\\
&\geq C_{0}\lambda_{k}^{\frac{r}{p-2}}=C_{0}\lambda_k^{(\frac{p}{p-2}-\frac{\tilde{\nu}}{2})}~~~\mbox{for all}~~k\geq \tilde{k},
\end{aligned}
\end{equation}
where $r=\tilde{\nu}(1-\frac{p}{2_{\tilde{\nu}}^*})=p-\frac{\tilde{\nu}(p-2)}{2}$ and $C_{0}>0$ is a constant. The last step in \eqref{4-48} is obtained by taking $\rho=\varepsilon_0\lambda_k^{\frac{r}{2(p-2)}}$, where $0<\varepsilon_0<\min\left\{1,(\frac{\lambda_1}{8C_1(1+\lambda_1)})^{\frac{1}{p-2}}\right\}$.
Thus, combining \eqref{4-44}, \eqref{4-46}, \eqref{4-48} and \eqref{2-9}, we get
\begin{equation}\label{4-49}
  b_{k}\geq C_{2}\cdot k^{\frac{2p}{\tilde{\nu}(p-2)}-1}\qquad \mbox{for all}~~k\geq \tilde{k},
\end{equation}
where $C_{2}>0$ and $\tilde{k}$ is a positive integer.
\end{proof}

 Next, we  introduce another kind of minimax values to get critical values of $E_2$.
Let
\[U_k=\{u=t\varphi_{k+1}+w|~t\in [0, R_{k+1}], w\in \overline{B_{R_{k+1}}}\cap W_k, \|u\|_{H_{X,0}^{1}(\Omega)}\leq R_{k+1}\}\]
and
\begin{align*}
\Lambda_k=\{&H\in C(U_k, H_{X,0}^{1}(\Omega))|~H|_{D_k}\in G_k ~\mbox{and}~H=\mbox{\textbf{id} for }\\
&u\in Q_k=(\partial B_{R_{k+1}}\cap W_{k+1})\cup ((B_{R_{k+1}}\setminus B_{R_{k}})\cap W_k)\}.
\end{align*}
Define
\[c_k=\inf\limits_{H\in \Lambda_k}\max\limits_{u\in U_k}E_2(H(u)).\]
Observe that for any $H\in \Lambda_k$, since $D_{k}\subset U_{k}$ and $H|_{D_{k}}\in G_{k}$, then we have
\[ \max_{u\in U_{k}}E_{2}(H(u))\geq \max_{u\in D_{k}}E_{2}(H(u))=\max_{u\in D_{k}}E_{2}(H|_{D_{k}}(u))\geq \inf_{h\in G_{k}}\max_{u\in D_{k}}E_{2}(h(u)). \]
That means $c_k\geq b_k$.

\begin{proposition}
\label{prop4-5}
Assume $c_k>b_k\geq M_1$. For $\delta\in (0, c_k-b_k)$, define
\[\Lambda_k(\delta)=\{H\in \Lambda_k|~E_2(H(u))\leq b_k+\delta ~\mbox{for}~u\in D_k\}\]
and
\[c_k(\delta)=\inf\limits_{H\in \Lambda_k(\delta)}\max\limits_{u\in U_k}E_2(H(u)).\]
Then $c_k(\delta)$ is a critical value of $E_2$.

\end{proposition}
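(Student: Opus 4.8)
The plan is to follow Rabinowitz's classical deformation argument, adapted to the degenerate functional $E_2$ on $H_{X,0}^1(\Omega)$. The starting point is the observation that $c_k(\delta)\geq c_k\geq b_k\geq M_1$, so $c_k(\delta)$ lies in the region $\widehat{A}_{M_1}$ on which $E_2$ satisfies the $(PS)$ condition by Proposition \ref{prop4-3}$(4)$; this is what makes the usual deformation machinery applicable. Assume for contradiction that $c_k(\delta)$ is a regular value of $E_2$. Since $E_2$ is even (up to the controlled defect $\eqref{4-11}$, but the key point here is that $E_2(u)$ and $E_2(-u)$ are both large so the deformation can be taken odd near the relevant level set), a standard quantitative deformation lemma — the even version, as in Theorem A.4 of \cite{Rabinowitz1986} — produces $\bar\varepsilon\in(0,\delta)$ and an odd homeomorphism $\eta:H_{X,0}^1(\Omega)\to H_{X,0}^1(\Omega)$ such that $\eta(u)=u$ whenever $E_2(u)\notin [c_k(\delta)-\bar\varepsilon, c_k(\delta)+\bar\varepsilon]$, and $\eta$ pushes the sublevel set $\{E_2\leq c_k(\delta)+\bar\varepsilon\}$ into $\{E_2\leq c_k(\delta)-\bar\varepsilon\}$.

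Next I would choose a near-optimal competitor $H\in \Lambda_k(\delta)$ with $\max_{u\in U_k}E_2(H(u))\leq c_k(\delta)+\bar\varepsilon$, and form the composition $\tilde H=\eta\circ H$. The crucial verification is that $\tilde H$ still belongs to $\Lambda_k(\delta)$. There are three things to check: first, $\tilde H$ is continuous and odd, which is immediate since both $\eta$ and $H$ are; second, $\tilde H=\mbox{\textbf{id}}$ on $Q_k$, which follows because on $Q_k$ we have $H=\mbox{\textbf{id}}$ and $E_2\leq 0 < c_k(\delta)-\bar\varepsilon$ there (by the choice of $R_k,R_{k+1}$ in the construction of $\Lambda_k$, using $\eqref{4-43}$), so $\eta$ fixes those points; third, $\tilde H|_{D_k}\in G_k$ and $E_2(\tilde H(u))\leq b_k+\delta$ for $u\in D_k$ — here one uses that $H|_{D_k}\in G_k$ with $E_2(H(u))\leq b_k+\delta< c_k-\bar\varepsilon \leq c_k(\delta)-\bar\varepsilon$, so again $\eta$ is the identity on $H(D_k)$ and $\tilde H|_{D_k}=H|_{D_k}$. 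Thus $\tilde H\in\Lambda_k(\delta)$, but $\max_{u\in U_k}E_2(\tilde H(u))\leq c_k(\delta)-\bar\varepsilon < c_k(\delta)$, contradicting the definition of $c_k(\delta)$ as the infimum over $\Lambda_k(\delta)$. Hence $c_k(\delta)$ is a critical value.

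The main obstacle I anticipate is making the deformation \emph{odd} while respecting the side constraint that defines $\Lambda_k(\delta)$ — one must ensure the deformation parameter/cutoff is chosen so that $\eta$ leaves $Q_k$ and $H(D_k)$ untouched, which forces $\delta$ (and hence $\bar\varepsilon$) to be taken small relative to $c_k-b_k$ and relative to the gap between $0$ and $M_1$; this is exactly why the hypothesis $\delta\in(0,c_k-b_k)$ and the requirement $c_k>b_k\geq M_1$ appear in the statement. A secondary technical point is that $E_2$ is only \emph{approximately} even (estimate $\eqref{4-11}$), so one should either invoke the version of the deformation lemma for functionals that are even on the relevant high sublevel sets, or note that since $|E_2(u)-E_2(-u)|\leq B_1(|E_2(u)|^{1/q}+1)=o(|E_2(u)|)$, for $M_1$ large enough the level sets near $c_k(\delta)$ are symmetric enough for the odd pseudo-gradient construction to go through. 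I would handle this by citing the corresponding deformation result in \cite{Rabinowitz1986} and remarking that the argument there applies verbatim on $\widehat{A}_{M_1}$ since all the ingredients — $C^1$ regularity (Proposition \ref{prop4-3}$(1)$), $(PS)$ on $\widehat{A}_{M_1}$ (Proposition \ref{prop4-3}$(4)$), and approximate evenness ($\eqref{4-11}$) — are in place.
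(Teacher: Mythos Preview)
Your approach is essentially the same as the paper's --- a contradiction via the deformation lemma, showing that the deformed map remains in $\Lambda_k(\delta)$ --- and the core argument is correct. However, you introduce an unnecessary complication by invoking the \emph{odd} version of the deformation lemma and worrying about the approximate evenness of $E_2$. This is a red herring: the paper uses only the standard (non-equivariant) deformation theorem, with no evenness hypothesis on $E_2$ at all. The key point, which you in fact note in your third verification, is that $\eta$ fixes all of $H(D_k)$ because $E_2(H(u))\leq b_k+\delta < c_k(\delta)-\bar\varepsilon$ there; hence $\tilde H|_{D_k}=H|_{D_k}$ and oddness on $D_k$ is inherited directly from $H|_{D_k}\in G_k$, regardless of whether $\eta$ is odd. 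So the entire discussion of approximate evenness and odd pseudo-gradients can be deleted.

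Two small corrections: first, the correct constraint on $\bar\varepsilon$ is $\bar\varepsilon < c_k-b_k-\delta$ (the paper takes $\bar\varepsilon=\tfrac12(c_k-b_k-\delta)$), not $\bar\varepsilon\in(0,\delta)$ as you wrote --- this is exactly what is needed for the inequality $b_k+\delta<c_k(\delta)-\bar\varepsilon$ that you use. Second, your claim that ``$H$ is odd'' is false: $U_k$ is not even a symmetric set, and $H$ is only required to be odd on $D_k$; but as noted above this does not matter once you know $\eta$ fixes $H(D_k)$. Finally, the paper opens by verifying that $\Lambda_k(\delta)\neq\varnothing$, extending a near-optimal $h\in G_k$ by the identity on $U_k\setminus D_k$; you should include this step.
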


\begin{proof}
The definition of $\Lambda_k(\delta)$ implies this set is nonempty. Indeed, the definition of $b_k$ implies for $\delta\in (0, c_k-b_k)$, there exists a $h\in G_k$ such that $E_2(h(u))\leq b_k+\delta$ holds for all $u\in D_k$. Consider the following continuous extension of $h$ on $U_{k}$, i.e.
\begin{equation}\label{4-50}
 H(u)=\left\{\begin{array}{ll}{u,} & { u\in U_k\backslash D_k} \\[2mm] {h(u),} & {u\in D_k.}\end{array}\right.
\end{equation}
That means, $H \in \Lambda_k(\delta)$.\par

Since $\Lambda_{k}(\delta)\subset \Lambda_{k}$, $c_{k}(\delta)\geq c_{k}$. Suppose $c_k(\delta)$ is not a critical value of $E_2$. Let $\bar{\varepsilon}=\frac{1}{2}(c_k-b_k-\delta)>0$.
Then by Proposition \ref{prop4-3} (4) and Deformation Theorem (see Lemma 1.60 in \cite{Rabinowitz1982} and Theorem A.4 in \cite{Rabinowitz1986}), for $c_k(\delta)>M_{1}$ and $\bar{\varepsilon}>0$, there exist  $\varepsilon\in (0, \bar{\varepsilon})$ and $\eta\in C([0,1]\times H_{X,0}^{1}(\Omega), H_{X,0}^{1}(\Omega))$ such that
\begin{equation}\label{4-51}
\eta(t,u)=u ~\mbox{for all }~t\in [0,1]~ ~\mbox{if}~~E_2(u)\notin [c_k(\delta)-\bar{\varepsilon}, c_k(\delta)+\bar{\varepsilon}],
\end{equation}
and
\begin{equation}\label{4-52}
\eta(1, A_{c_k(\delta)+\varepsilon})\subset A_{c_k(\delta)-\varepsilon},
\end{equation}
where $A_c=\{u\in H_{X,0}^{1}(\Omega)|~E_2(u)\leq c\}$.
By the definition of $c_k(\delta)$, we can choose a $H\in \Lambda_k(\delta)$ such that
\begin{equation}\label{4-53}
\max\limits_{u\in U_k}E_2(H(u))\leq c_k(\delta)+\varepsilon.
\end{equation}
We then claim that $\eta (1, H(\cdot))\in \Lambda_k(\delta)$. Clearly, $\eta (1, H(\cdot))\in C(U_k, H_{X,0}^{1}(\Omega))$.
Since $H\in \Lambda_k$, if $u\in Q_k$, $H(u)=u$ and therefore $E_2(H(u))=E_2(u)\leq 0$ according to the definitions of $R_k$ and $R_{k+1}$. Recall that $b_k\geq M_1>0$ and $c_k(\delta)\geq c_k>b_k$, we have $c_{k}(\delta)-\bar{\varepsilon}=c_{k}(\delta)-\frac{1}{2}c_{k}+\frac{1}{2}b_{k}+\frac{1}{2}\delta>0$. Owing to \eqref{4-51}, we get  $\eta(1, H(u))=H(u)=u$ on $Q_k$. Then, we show that $\eta(1,H(\cdot))|_{D_{k}}\in G_{k}$.
For any $u\in D_k$, $E_2(H(u))\leq b_k+\delta$ due to $H\in \Lambda_k(\delta)$ and therefore
\[E_2(H(u))-\frac{1}{2}(b_k+\delta)\leq \frac{1}{2}(b_k+\delta)<\frac{1}{2}c_k\leq c_k(\delta)-\frac{1}{2}c_k,\]
which means
\begin{equation}\label{4-54}
E_2(H(u))< c_k(\delta)-\frac{1}{2}(c_k-b_k-\delta)=c_k(\delta)-\bar{\varepsilon}
\end{equation}
and then $\eta(1, H(u))=H(u)$.
We can also get $\eta(1, H(-u))=H(-u)=-H(u)$. Therefore, $\eta(1, H(\cdot))$ is odd on $ D_k$. Moreover, for any $u\in \partial B_{R_k}\cap W_k$, we have $H(u)=u$ and $E_{2}(u)\leq 0$, which implies $\eta(1, H(u))=u$ on $\partial B_{R_k}\cap W_k$.  Hence,  $\eta (1, H(\cdot))|_{D_k}\in G_k$ and $\eta (1, H(\cdot))\in \Lambda_k$. Furthermore, the arguments above indicate that, for $u\in D_k$, $E_2(\eta(1, H(u)))=E_2(H(u))\leq b_k+\delta$. Consequently, we obtain $\eta (1, H(\cdot))\in \Lambda_k(\delta)$.
Besides, from \eqref{4-53}, we know that $H(u)\in A_{c_{k}(\delta)+\varepsilon}$ for all $u\in U_{k}$. Thus, \eqref{4-52} yields that
\[\max_{u\in U_k}E_2(\eta(1, H(u)))\leq c_k(\delta)-\varepsilon,\]
which is contrary to the definition of $c_k(\delta)$.

\end{proof}

Finally, we show that, $c_k>b_k$ holds for infinitely
many $k$.
\begin{proposition}
\label{prop4-6}
For any positive integer $\bar{k}$ satisfying $\bar{k}\geq\tilde{k}$ (where $\tilde{k}$ is the same constant given in \eqref{4-45}), if $c_k=b_k$ for all $k\geq \bar{k}$, then there exists a constant $\tilde{M}>0$  such that
\begin{equation}\label{4-55}
b_k\leq \tilde{M}.
\end{equation}
Hence, from Proposition \ref{prop4-4}, we know that $c_k>b_k$ must hold for infinitely
many $k$.

\end{proposition}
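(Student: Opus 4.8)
The plan is to argue by contradiction: under the standing hypothesis $c_k=b_k$ for all $k\ge\bar k$, I would convert the symmetry-defect estimate \eqref{4-11} of Proposition \ref{prop4-3} into a discrete growth inequality for $\{b_k\}$, iterate it to bound how fast $\{b_k\}$ can increase, and then confront this growth rate with the unconditional sharp lower bound \eqref{4-45} of Proposition \ref{prop4-4}. The bridge between the two minimax families is an odd-reflection comparison of $b_{k+1}$ with $c_k$. Given any $H\in\Lambda_k$, I extend it from the upper half-ball $U_k=\{u\in\overline{B_{R_{k+1}}}\cap W_{k+1}:\langle u,\varphi_{k+1}\rangle\ge0\}$ to the full ball $D_{k+1}=\overline{B_{R_{k+1}}}\cap W_{k+1}$ by odd reflection across $W_k$, setting $\widetilde H(u)=H(u)$ for $u\in U_k$ and $\widetilde H(u)=-H(-u)$ for $u\in-U_k$. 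On the slice $W_k\cap\overline{B_{R_{k+1}}}$ the map $H$ is odd (it equals the odd map $h\in G_k$ on $D_k$ and $\textbf{id}$ on $(B_{R_{k+1}}\setminus B_{R_k})\cap W_k$), so the two formulas agree and $\widetilde H$ is continuous and odd; since $H=\textbf{id}$ on $\partial B_{R_{k+1}}\cap W_{k+1}$ by the definition of $Q_k$, odd reflection gives $\widetilde H=\textbf{id}$ on all of $\partial B_{R_{k+1}}\cap W_{k+1}$, whence $\widetilde H\in G_{k+1}$.

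Oddness of $\widetilde H$ then yields
\[
\max_{u\in D_{k+1}}E_2(\widetilde H(u))=\max\Bigl\{\max_{u\in U_k}E_2(H(u)),\ \max_{u\in U_k}E_2(-H(u))\Bigr\}.
\]
Writing $M=\max_{u\in U_k}E_2(H(u))$ and using \eqref{4-11} pointwise, $E_2(-H(u))\le E_2(H(u))+B_1(|E_2(H(u))|^{1/q}+1)$; because $b_k\ge M_1>0$ forces $M\ge c_k=b_k$ to be large and the scalar function $t\mapsto t+B_1(|t|^{1/q}+1)$ is increasing for large $t$, the reflected maximum is controlled by $M+B_1(M^{1/q}+1)$. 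Taking the infimum over $H\in\Lambda_k$ produces the comparison
\[
b_{k+1}\le c_k+B_1\bigl(c_k^{1/q}+1\bigr),
\]
which under the hypothesis $c_k=b_k$ becomes the recursion $b_{k+1}\le b_k+B_1(b_k^{1/q}+1)$. Since $q>1$ the increment is sublinear in $b_k$, so iterating from $b_{\bar k}$ (comparing with the solution of $y'=B_1 y^{1/q}$) yields a growth envelope
\[
b_k\le \tilde M_0\,k^{\frac{q}{q-1}}\qquad (k\ge\bar k)
\]
for a constant $\tilde M_0$ depending only on $\bar k$, $B_1$ and $\|g\|_{L^2(\Omega)}$: under the hypothesis $\{b_k\}$ can grow no faster than the exponent $q/(q-1)$.

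Finally I would invoke Proposition \ref{prop4-4} together with \eqref{2-9}, giving the unconditional lower bound $b_k\ge C_2\,k^{\frac{2p}{\tilde\nu(p-2)}-1}$ for $k\ge\tilde k$, and recall that condition (A) is exactly $\frac{2p}{\tilde\nu(p-2)}-1>\frac{q}{q-1}$. Thus the lower exponent strictly dominates the envelope exponent, so for all large $k$ one would be forced into $C_2\,k^{\frac{2p}{\tilde\nu(p-2)}-1}\le b_k\le \tilde M_0\,k^{\frac{q}{q-1}}$, i.e. $C_2\le \tilde M_0\,k^{\frac{q}{q-1}-(\frac{2p}{\tilde\nu(p-2)}-1)}\to0$, which is absurd. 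The only way the two estimates can coexist is that the polynomial growth predicted by Proposition \ref{prop4-4} is never actually realized, i.e. $\{b_k\}$ stays bounded, $b_k\le\tilde M$; and this uniform bound \eqref{4-55} in turn contradicts $b_k\ge C_2 k^{\frac{2p}{\tilde\nu(p-2)}-1}\to\infty$. Hence the hypothesis $c_k=b_k$ cannot hold for all $k\ge\bar k$, which, since $c_k\ge b_k$ always, shows $c_k>b_k$ for infinitely many $k$. I expect the main obstacle to be the first step: verifying that the odd reflection $\widetilde H$ genuinely lands in $G_{k+1}$ — the boundary normalisation on $\partial B_{R_{k+1}}\cap W_{k+1}$ and the matching of $H$ with $-H\circ(-\cdot)$ across $W_k$ — and then converting the two-sided pointwise estimate \eqref{4-11} into the clean scalar recursion, where the monotonicity of $t\mapsto t+B_1(|t|^{1/q}+1)$ and the positivity $b_k\ge M_1$ are needed to discard the regime of very negative $E_2$ before the maximum is taken.
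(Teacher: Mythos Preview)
Your odd-reflection comparison and the derivation of the recursion $b_{k+1}\le b_k+B_1(b_k^{1/q}+1)$ match the paper's proof exactly (the paper phrases the reflected-half estimate by fixing the maximiser $v_0\in D_{k+1}$ and first checking $E_2(H(-v_0))>0$ before applying \eqref{4-11}, which cleanly sidesteps the non-monotonicity of $t\mapsto t+B_1(|t|^{1/q}+1)$ near $t=0$ that you flag at the end). The divergence is in how the recursion is iterated. You compare with $y'=B_1y^{1/q}$ to obtain the polynomial envelope $b_k\le\tilde M_0\,k^{q/(q-1)}$. The paper instead rewrites the recursion multiplicatively as $b_{k+1}\le b_k\bigl(1+2B_1b_k^{(1-q)/q}\bigr)$, iterates to $b_{k_2+l}\le b_{k_2}\exp\bigl(2B_1\sum_{k\ge k_2} b_k^{(1-q)/q}\bigr)$, and then feeds the \emph{lower} bound \eqref{4-45} back into the sum: since $b_k^{(1-q)/q}\le C_2^{(1-q)/q}k^{-(\frac{2p}{\tilde\nu(p-2)}-1)\frac{q-1}{q}}$ and condition (A) makes this exponent $<-1$, the series converges and one obtains the genuine uniform bound \eqref{4-55}. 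Your polynomial envelope already suffices to contradict \eqref{4-45} and hence to conclude $c_k>b_k$ infinitely often, which is all that the proof of Theorem \ref{thm1-2} actually uses; but it does not by itself yield \eqref{4-55}.

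Your last paragraph is where the logic slips. Once you have $C_2\,k^{\frac{2p}{\tilde\nu(p-2)}-1}\le b_k\le\tilde M_0\,k^{q/(q-1)}$ with the left exponent strictly larger, that is already an outright contradiction with the standing hypothesis $c_k=b_k$; the argument should end there. The sentence ``the only way the two estimates can coexist is that $\{b_k\}$ stays bounded'' is not a valid inference: the lower bound \eqref{4-45} is unconditional, so boundedness of $\{b_k\}$ is never an option, and you cannot extract \eqref{4-55} from the contradiction itself. If you want \eqref{4-55} as stated in the proposition, you need the paper's bootstrap that uses the lower bound \emph{inside} the iteration rather than only at the end.
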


\begin{proof}
For $\varepsilon>0$ and $k\geq \bar{k}$, the definition of $c_{k}$ allows us to choose a $H\in \Lambda_k$ such that
\begin{equation}\label{4-56}
\max\limits_{u\in U_k}E_2(H(u))\leq c_k+\varepsilon=b_k+\varepsilon.
\end{equation}
Observe that $D_{k+1}=U_k\cup (-U_k)=\overline{B_{R_{k+1}}}\cap W_{k+1}$ is a compact set in finite dimensional space $W_{k+1}$. Hence, $H$ can be continuously extended to $D_{k+1}$ as an odd function which belongs to $G_{k+1}$. Therefore, by \eqref{4-44},
\begin{equation}\label{4-57}
  b_{k+1}=\inf_{h\in G_{k+1}}\max_{u\in D_{k+1}}E_{2}(h(u))\leq \max_{u\in D_{k+1}}E_{2}(H(u))=E_{2}(H(v_{0}))
\end{equation}
for some $v_{0}\in D_{k+1}$. If $v_{0}\in U_{k}$, by \eqref{4-56} and \eqref{4-57},
\begin{equation}\label{4-58}
  b_{k+1}\leq  E_{2}(H(v_{0}))\leq b_{k}+\varepsilon.
\end{equation}
Suppose $v_{0}\in -U_{k}$. Then, by \eqref{4-11} we obtain
\begin{equation}\label{4-59}
  E_{2}(-H(v_{0}))\geq E_{2}(H(v_{0}))-B_{1}(|E_{2}(H(v_{0}))|^{\frac{1}{q}}+1).
\end{equation}
Since $b_{k}\to \infty$ as $k\to +\infty$ via \eqref{4-45}, it follows from \eqref{4-57} and \eqref{4-59} that
 $ E_{2}(-H(v_{0}))=E_{2}(H(-v_{0}))>0$ for large $k$, e.g. $k\geq k_{1}$ with $k_1\geq \bar{k}$. Thus by \eqref{4-11}, the oddness of $H$, and \eqref{4-56},
\begin{equation}\label{4-60}
\begin{aligned}
  E_{2}(H(v_{0}))&=E_{2}(-H(-v_{0}))\leq E_{2}(H(-v_{0}))+B_{1}\left((E_{2}(H(-v_{0})))^{\frac{1}{q}}+1\right)\\
  &\leq b_{k}+\varepsilon+B_{1}[(b_{k}+\varepsilon)^{\frac{1}{q}}+1]~~\mbox{for}~~k\geq k_{1}.
\end{aligned}
\end{equation}
Therefore, \eqref{4-57}, \eqref{4-58} and \eqref{4-60} yield that
\begin{equation}\label{4-61}
  b_{k+1}\leq b_{k}+\varepsilon+B_{1}[(b_{k}+\varepsilon)^{\frac{1}{q}}+1]~~\mbox{for}~~k\geq k_{1}.
\end{equation}
Since $\varepsilon$ is arbitrary, \eqref{4-61} implies
\begin{equation}\label{4-62}
  b_{k+1}\leq b_{k}+B_{1}(b_{k}^{\frac{1}{q}}+1)\leq b_{k}+2B_{1}b_{k}^{\frac{1}{q}}=b_{k}\left(1+2B_{1}b_{k}^{\frac{1-q}{q}}\right)~~\mbox{for all}~~k\geq k_{2},
\end{equation}
where $k_{2}\geq k_{1}$ such that $b_{k}\geq 1$ for $k\geq k_{2}$. By iteration we obtain that, for $l\in \mathbb{N}^{+}$,
\begin{equation}\label{4-63}
\begin{aligned}
 b_{k_{2}+l}&\leq   b_{k_{2}}\prod_{k=k_{2}}^{k_{2}+l-1}\left(1+2B_{1}\cdot b_{k}^{\frac{1-q}{q}}\right)
 = b_{k_{2}}\exp\left(\sum_{k=k_{2}}^{k_{2}+l-1}\ln\left(1+2B_{1}\cdot b_{k}^{\frac{1-q}{q}} \right) \right)\\
 &\leq b_{k_{2}}\exp\left(2B_{1}\cdot \sum_{k=k_{2}}^{k_{2}+l-1} b_{k}^{\frac{1-q}{q}}\right).
\end{aligned}
\end{equation}
The last step of \eqref{4-63} is derived from the inequality $\ln(1+x)\leq x$ for $x\geq 0$.\par
On the other hand, thanks to Proposition \ref{prop4-4}, we have $b_{k}\geq C_{2}\cdot k^{\frac{2p}{\tilde{\nu}(p-2)}-1}$ for $k\geq k_{2}$. Hence, the $\left(\frac{2p}{\tilde{\nu}(p-2)}-1\right)\cdot \frac{1-q}{q}<-1$ implies there exists a positive constant $M_{2}$ such that
\begin{equation}\label{4-64}
  \sum_{k=k_{2}}^{\infty} b_{k}^{\frac{1-q}{q}}\leq C_{2}^{\frac{1-q}{q}}\sum_{k=k_{2}}^{\infty} \left(k^{\frac{2p}{\tilde{\nu}(p-2)}-1} \right)^{\frac{1-q}{q}}\leq M_{2}<+\infty.
\end{equation}
Therefore, by \eqref{4-63} and \eqref{4-64}, we can find a positive constant $\tilde{M}$ such that $b_{k}\leq \tilde{M}$.

\end{proof}

Now, we finish the proof of Theorem \ref{thm1-2}.

\begin{proof}[Proof of Theorem \ref{thm1-2}]
From Proposition \ref{prop4-4}, we know that there exists $\bar{k}_{1}\geq \tilde{k}$, such that $b_{k}\geq M_{1}\geq M_{0}$ for $k\geq \bar{k}_{1}$. By Proposition \ref{prop4-6}, we can find $l_{1}\geq \bar{k}_{1}$ such that $c_{l_{1}}>b_{l_{1}}$. Then, it follows from Proposition \ref{prop4-3} and Proposition \ref{prop4-5} that there exists $\delta_{1}\in (0,c_{l_{1}}-b_{l_{1}})$ such that $c_{1}=c_{l_{1}}(\delta_{1})\geq c_{l_{1}}>b_{l_{1}}$ is a critical value of $E_{2}$ and is also a critical value of $E_{1}$. Next, by Proposition \ref{prop4-4} again, we can find a $\bar{k}_{2}\geq l_{1}$ such that $b_{k}\geq c_{1}+1$ for $k\geq \bar{k}_{2}$. Similarly, the Proposition  \ref{prop4-3}, \ref{prop4-5} and  \ref{prop4-6} imply that there exists $l_{2}\geq \bar{k}_{2}$ such that $c_{l_{2}}>b_{l_{2}}$. Moreover, there exists $\delta_{2}\in (0, c_{l_{2}}-b_{l_{2}})$ such that $c_{2}=c_{l_{2}}(\delta_{2})\geq c_{l_{2}}>b_{l_{2}}\geq c_{1}+1$ is  another critical value of $E_{1}$. Repeating this process, we can deduce that the functional $E_1$ possesses infinitely many critical points $\{u_m\}_{m=1}^{\infty}$ in $H_{X,0}^1(\Omega)$ such that
\begin{equation}\label{4-65}
 E_1(u_m)=c_m,\qquad 0<c_1<c_2<\cdots<c_m<\cdots,~~~ c_m\rightarrow+\infty,~\mbox{as}~m\rightarrow+\infty.
\end{equation}
Besides, we can easily obtain from \eqref{3-34} and \eqref{4-4} that, there exists a positive constant $\widehat{C}>0$, such that
\begin{equation}\label{4-66}
|E_{1}(u)|\leq \widehat{C}\left(\|u\|_{H_{X,0}^{1}(\Omega)}+\|u\|_{H_{X,0}^{1}(\Omega)}^{2}+\|u\|_{H_{X,0}^{1}(\Omega)}^{p} \right)~~\mbox{for all}~~u\in H_{X,0}^{1}(\Omega).
\end{equation}
Hence,  \eqref{4-65}, \eqref{4-66} and Proposition \ref{prop4-1} imply that $\{u_{m}\}_{m=1}^{\infty}$ is also a sequence of unbounded weak solutions in $H_{X,0}^{1}(\Omega)$.
\end{proof}


\begin{thebibliography}{100}

\bibitem{Bahri1981}
A. Bahri, H. Berestycki, A perturbation method in critical point theory and applications, Trans. Amer. Math. Soc. 267 (1981), no. 1, 1–32.

\bibitem{Bahri1988}
A. Bahri, P. L. Lions,  Morse index of some min‐max critical points. I. Application to multiplicity results,Comm. Pure Appl. Math. 41 (1988), no. 8, 1027–1037.

\bibitem{Bahri1992}
A. Bahri, P. L. Lions, Solutions of superlinear elliptic equations and their Morse indices, Comm. Pure Appl. Math. 45 (1992), no. 9, 1205–1215.

\bibitem{Bony1969}
J. M. Bony, Principe du maximum, in{\'e}galit{\'e} de Harnack et unicit{\'e} du
  probleme de Cauchy pour les op{\'e}rateurs elliptiques
  d{\'e}g{\'e}n{\'e}r{\'e}s, Ann. Inst. Fourier, 19 (1969) 277-304.

\bibitem{Bramanti2014}
M. Bramanti, An invitation to hypoelliptic operators and H\"{o}rmander's vector fields, Springer, 2014.

\bibitem{Brandolini2010}
L. Brandolini, M. Bramanti, E. Lanconelli, et al, Non-divergence equations structured on
H\"{o}rmander vector fields: heat kernels and Harnack inequalities, Mem. Amer. Math. Soc. 204 (2010), no. 961, 1-123.



\bibitem{Capogna1993}
L. Capogna, D. Danielli, N. Garofalo, An embedding theorem and the Harnack inequality
for nonlinear subelliptic equations, Comm. Partial Differential Equations 18 (1993), no. 9-10, 1765–1794.


\bibitem{chen-chen2019}
H. Chen, H. G. Chen, Estimates of Dirichlet Eigenvalues for a Class of Sub-elliptic Operators. arXiv preprint arXiv:1905.13373, 2019.

\bibitem{chen-chen2019-jmpa}
H. Chen, H. G. Chen, Estimates of eigenvalues for subelliptic operators on compact manifold, J. Math. Pures Appl. (9) 131 (2019), 64–87.

\bibitem{CCD}
H. Chen, H. G. Chen, Y. R. Duan, et al, Lower bounds of Dirichlet eigenvalues for a class of finitely
  degenerate Grushin type elliptic operators, Acta Math. Sci. Ser. B (Engl. Ed.) 37 (2017), no. 6, 1653–1664.


\bibitem{chen-chen2019-jpsdo}
H. Chen, H. G. Chen, J. F. Wang, et al, Lower bounds of Dirichlet eigenvalues for a class of higher order degenerate elliptic operators, J. Pseudo-Differ. Oper. Appl. 10 (2019), no. 2, 475–488.

\bibitem{chen-chen2019-ata}
H. Chen, H. G. Chen, J. F. Wang, et al, Lower bounds of Dirichlet eigenvalues for general Grushin type bi-subelliptic operators, Anal. Theory Appl. 35 (2019), no. 1, 66–84.




\bibitem{chenluo}
H. Chen, P. Luo, Lower bounds of Dirichlet eigenvalues for some degenerate elliptic
  operators, Calc. Var. Partial Differential Equations 54 (2015), no. 3, 2831–2852.


\bibitem{Chen-Zhou2017}
H. Chen, Y. F. Zhou, Lower bounds of eigenvalues for a class of bi-subelliptic operators, J. Differential Equations 262 (2017), no. 12, 5860–5879.


\bibitem{Li2009}
H. Chen, K. Li, The existence and regularity of multiple solutions for a class of infinitely degenerate elliptic equations, Math. Nachr. 282 (2009), no. 3, 368–385.



\bibitem{Luo2015}
H. Chen, P. Luo, S. Y. Tian, Multiplicity and regularity of solutions for infinitely degenerate elliptic equations with a free perturbation, J. Math. Pures Appl. (9) 103 (2015), no. 4, 849–867.

\bibitem{Dong1982}
G. C. Dong, S. Li, On the existence of infinitely many solutions of the Dirichlet problem for some nonlinear elliptic equations, Sci. Sinica Ser. A 25 (1982),468-475.

\bibitem{Garofalo1996}
N. Garofalo, D. M. Nhieu, Isoperimetric and Sobolev inequalities for Carnot-Carath{\'e}odory
  spaces and the existence of minimal surfaces, Comm. Pure Appl. Math. 49 (1996), no. 10, 1081–1144.

\bibitem{Hajlasz1998}
P. Haj{\l}asz, P. Koskela, Isoperimetric inequalities and imbedding theorems in irregular domains, J. London Math. Soc. (2) 58 (1998), no. 2, 425–450.

\bibitem{hormander1967}
L. H{\"o}rmander, Hypoelliptic second order differential equations, Acta Math. 119 (1967), 147–171.


\bibitem{Jersion1986duke}
D. S. Jerison, The Poincar\'{e} inequality for vector fields satisfying H\"{o}rmander's condition, Duke Math. J. 53 (1986), no. 2, 503–523.

\bibitem{Jerison1986}
D. S. Jerison, A. S\'{a}nchez-Calle, Estimates for the heat kernel for a sum of squares of vector fields, Indiana Univ. Math. J. 35 (1986), no. 4, 835–854.

\bibitem{Jost1998}
J. Jost, C. J. Xu, Subelliptic harmonic maps, Trans. Amer. Math. Soc. 350 (1998), no. 11, 4633–4649.

\bibitem{Metivier1976}
G. M{\'e}tivier, Fonction spectrale et valeurs propres d'une classe d'op\'{e}rateurs non elliptiques, Comm. Partial Differential Equations 1 (1976), no. 5, 467–519.


\bibitem{Montgomery2002}
R. Montgomery, A tour of subriemannian geometries. Their geodesics and applications, Mathematical Surveys and Monographs, 91. American Mathematical Society, Providence, RI, 2002.

\bibitem{Xu03}
Y. Morimoto, C. J. Xu, Logarithmic Sobolev inequality and semi-linear Dirichlet problems for infinitely degenerate elliptic operators, Ast\'{e}risque, No. 284 (2003), 245–264.

\bibitem{Stein1985}
A. Nagel, E. M. Stein, S. Wainger, Balls and metrics defined by vector fields I: Basic properties, Acta Math. 155 (1985), no. 1-2, 103–147.



\bibitem{Rabinowitz1982}
P. H. Rabinowitz, Multiple critical points of perturbed symmetric functionals, Trans. Amer. Math. Soc. 272 (1982), no. 2, 753–769.

\bibitem{Rabinowitz1986}
P. H. Rabinowitz, Minimax methods in critical point theory with applications to differential equations, CBMS Regional Conference Series in Mathematics, 65.

 \bibitem{stein1976}
L. P. Rothschild, E.M. Stein, Hypoelliptic differential operators and nilpotent groups, Acta Math. 137 (1976), no. 3-4, 247–320.

\bibitem{Sanchezcalle1984}
A. S\'{a}nchez-Calle. Fundamental solutions and geometry of the sum of squares of vector
fields, Invent. Math. 78 (1984), no. 1, 143–160.

\bibitem{Sqassina2006}
M. Squassina, Existence, Multiplicity, Perturbation and Concentration Results for a Class of Quasi-linear Elliptic Problems, Department of Mathematics, Texas State University, 2006.

\bibitem{Struwe2000}
M. Struwe, Variational methods. New York: Springer-verlag, 2000.

\bibitem{Taylor2011}
M. E. Taylor, Partial differential equations I, Basic theory, volume 115, Applied Mathematical Sciences, 2011.

\bibitem{Murthy2008}
M. K. Venkatesha Murthy, A class of subelliptic quasilinear equations, J. Global Optim. 40 (2008), no. 1-3, 245–260.

\bibitem{Willem1997}
M. Willem. Minimax theorems, Springer Science \& Business Media, 1997.

\bibitem{Xu1990}
C. J. Xu, Subelliptic variational problems, Bull. Soc. Math. France 118 (1990), no. 2, 147–169.

\bibitem{Xu1994}
C. J. Xu, Semilinear subelliptic equations and Sobolev inequality for vector fields satisfying H\"{o}rmander's condition, Chinese J. Contemp. Math. 15 (1994), no. 2, 183–192.

\bibitem{Xu1995}
C. J. Xu, Existence of bounded solutions for quasilinear subelliptic Dirichlet problems, J. Partial Differential Equations 8 (1995), no. 2, 97–107.

\bibitem{Xu1996}
C. J. Xu, Dirichlet problems for the quasilinear second order subelliptic equations, Acta Math. Sin. (Engl. Ser.) 12 (1996), no. 1, 18–32.

\bibitem{Xu1997}
C. J. Xu, C. Zuily, Higher interior regularity for quasilinear subelliptic systems, Calc. Var. Partial Differential Equations 5 (1997), no. 4, 323–343.

\bibitem{Yung2015}
P. L. Yung, A sharp subelliptic Sobolev embedding theorem with weights, Bull. Lond. Math. Soc. 47 (2015), no. 3, 396–406.



\end{thebibliography}
\end{document}